\title[Purely infinite locally compact Hausdorff \'{e}tale groupoids]{Purely infinite locally compact Hausdorff \'{e}tale groupoids and their $C^*$-algebras}
\thanks{}
\theoremstyle{plain}
\newtheorem{Thm}{Theorem}[section]
\theoremstyle{definition}
\newtheorem{Exl}[Thm]{Example}
\newtheorem{Rmk}[Thm]{Remark}
\theoremstyle{plain}
\newtheorem{thm}[Thm]{Theorem}
\newtheorem{lem}[Thm]{Lemma}
\newtheorem{cor}[Thm]{Corollary}
\newtheorem{prop}[Thm]{Proposition}
\theoremstyle{definition}
\newtheorem{defn}[Thm]{Definition}
\newtheorem{ques}[Thm]{Question}
\newenvironment{customthm}[1]
{\innercustomthm}
{\endinnercustomthm}
\newcommand{\R}{{\mathbb R}}
\newcommand{\N}{{\mathbb N}}
\newcommand{\Z}{{\mathbb Z}}
\newcommand{\F}{{\mathbb F}}
\newcommand{\supp}{\mathrm{supp}}
\numberwithin{equation}{section}
\newcommand{\id}{\mathrm{id}}
\newcommand{\CA}[0]{\mathcal{A}} 
\newcommand{\CC}[0]{\mathcal{C}} \newcommand{\CD}[0]{\mathcal{D}}
 \newcommand{\CF}[0]{\mathcal{F}}
\newcommand{\CG}[0]{\mathcal{G}} 
\newcommand{\CI}[0]{\mathcal{I}} 
\newcommand{\CK}[0]{\mathcal{K}} 
\newcommand{\CO}[0]{\mathcal{O}}
 \newcommand{\CV}[0]{\mathcal{V}}
\newcommand{\CW}[0]{\mathcal{W}}
\newcommand{\diag}[0]{\operatorname{diag}}
\newcommand{\GU}[0]{\CG^{(0)}}
\newcommand*\diff{\mathop{}\!\mathrm{d}}
\newtheorem{lemma}[Thm]{Lemma}
\theoremstyle{definition}
\numberwithin{equation}{Thm}
\begin{document}
	
\author{Xin Ma}
\email{xma29@buffalo.edu}
\address{Department of Mathematics,
	State University of New York at Buffalo,
	Buffalo, NY, 14260}

\subjclass[2010]{37B05, 46L35}
\keywords{Pure infiniteness of groupoids, Groupoid semigroup, Reduced groupoid $C^*$-algebras}

\date{Sep 23, 2020.}

	
\begin{abstract}
In this paper, we introduce properties including groupoid comparison, pure infiniteness and paradoxical comparison as well as a new algebraic tool called groupoid semigroup for locally compact Hausdorff \'{e}tale groupoids. We show these new tools help establishing pure infiniteness of reduced groupoid $C^*$-algebras. As an application, we show a dichotomy of stably finiteness against pure infiniteness for reduced groupoid $C^*$-algebras arising from locally compact Hausdorff \'{e}tale minimal topological principal groupoids. This generalizes the  dichotomy obtained by  B\"{o}nicke-Li  and Rainone-Sims. We also study the relation among our paradoxical comparison, $n$-filling property and locally contracting property appeared in the literature for locally compact Hausdorff \'{e}tale groupoids. 
\end{abstract}
\maketitle

\section{Introduction}
Nowadays, deep connections are known between $C^*$-algebras and topological groupoids. In particular,  locally compact \'{e}tale groupoids have long been an important source of examples and motivation for the study of $C^\ast$-algebras via the construction of the reduced groupoid $C^\ast$-algebra $C_r^\ast(\CG)$  from a locally compact \'{e}tale groupoid $\CG$. Purely infinite simple  $C^*$-algebras were introduced by Cuntz in \cite{Cuntz} in which a simple $C^*$-algebra $A$ is called \textit{purely infinite} if every non-zero hereditary sub-$C^*$-algebra of $A$ contains an infinite projection. The class of simple separable nuclear purely infinite $C^*$-algebras, called \textit{Kirchberg algebras}, is of particular interest because of the classification of these algebras by $K$- or $KK$-theory obtained by Kirchberg and Phillips (see \cite{Phillips} for example). Many natural examples of Kirchberg algebras, for example, the Cuntz algebras $\CO_n$ for $2\leq n\leq \infty$, can be obtained from groupoids (see \cite{Renault}). In this paper, we restrict our discussion to locally compact Hausdorff \'{e}tale groupoids. We say a $C^*$-algebra $A$ has a \textit{groupoid model} if there is a locally compact Hausdorff \'{e}tale groupoid $\CG$ such that $A\simeq C^*_r(\CG)$. If $\CG$ can be chosen to be a transformation groupoid, we say $A$ has a \textit{dynamical model}.  Spielberg, in \cite{Sp},  showed that any Kirchberg algebra $A$ in the UCT class has a groupoid model in the sense that there is a directed graph $E$, as a mixture of directed $1$-graph and $2$-graphs, generating a locally compact Hausdorff \'{e}tale second countable groupoid $\CG_E$ such that $A\simeq C^*_r(\CG_E)$. 

In \cite{K-Rord}, Kirchberg and R{\o}rdam generalized Cuntz's original notion of pure infiniteness to not necessarily simple $C^*$-algebras by using the Cuntz subequivalence relation. In addition, in \cite{Kir-Rord}, they introduced strongly pure infiniteness for a general $C^*$-algebra, which was shown there to be equivalent to $\CO_\infty$-absorption, i.e., $A\otimes \CO_\infty\simeq A$ if the $C^*$-algebra $A$ is nuclear and separable. It then has been  verified by Kirchberg and R{\o}rdam in \cite{Kir-Rord} that strongly pure infiniteness and pure infiniteness are equivalent if the $C^*$-algebra is simple or of real rank zero. Furthermore, in \cite{P-R} Pasnicu and R{\o}rdam established this equivalence for $C^*$-algebras with the ideal property (IP).

 However, so far it is not known whether all strongly purely infinite or purely infinite $C^*$-algebras in the sense of Kirchberg and R{\o}rdam have a locally compact Hausdorff \'{e}tale groupoid model except Spielberg's result for Kirchberg algebras mentioned above. To study this question, it is believed that one first needs to come up with a notion describing pure infiniteness or paradoxicality within the category of locally compact Hausdorff \'{e}tale groupoids as a regularity property implying the pure infiniteness of the reduced groupoid $C^*$-algebras. To the best knowledge of the author, the earliest systematic study of this question was initiated by Anantharaman-Delaroche in \cite{A-D} through the introduction of a property named \textit{locally contracting}. See also \cite{L-S} for the dynamical version of locally contracting called \textit{local boundary action}. It was proved in \cite{A-D} that if a locally compact Hausdorff \'{e}tale groupoid $\CG$ is locally contracting then every non-zero hereditary sub-$C^*$-algebra of $C^*_r(\CG)$ contains an infinite projection. Therefore, if $C^*_r(\CG)$ is also simple, which holds when $\CG$ is minimal and topologically principal, proved in \cite{Renault} as well as \cite{B-L}, then $C^*_r(\CG)$ is purely infinite. Motivated by $n$-filling actions defined in \cite{J-R}, Suzuki in \cite{YS} introduced the  $n$-\textit{filling} property for locally compact Hausdorff \'{e}tale groupoids $\CG$ on compact spaces and show that the $n$-filling property implies that  $C^*_r(\CG)$ is purely infinite and simple under an assumption that  $\CG$ is topological principal.  Rainone-Sims in \cite{Ra-Sims} recovered this result in the case of ample groupoids. On the other hand, in the study of ample groupoids, Rainone-Sims in \cite{Ra-Sims} and B\"{o}nicke-Li in \cite{B-L}  independently generalized the paradoxicality of compact open sets for dynamical systems defined by R{\o}rdam and Sierakowski in \cite{R-S} to the setting of locally compact Hausdorff \'{e}tale ample groupoids. It was also shown in \cite{B-L} by using an algebraic argument that if a locally compact Hausdorff \'{e}tale ample groupoid $\CG$ is essentially principal and has this nice paradoxical decomposition of compact open sets in the unit space $\GU$ then $C^*_r(\CG)$ is purely infinite and thus strongly purely infinite because $C^*_r(\CG)$ in this case has the ideal property (IP). 
 
 We remark that all purely infinite $C^*$-algebras arising from the groupoids in \cite{A-D}, \cite{Kir-S}, \cite{B-L} and \cite{R-S} above have infinite projections. However, there are many purely infinite non-simple $C^*$-algebras having no non-zero projections, for example, $\CO_2\otimes C_0(\R)$. A locally compact Hausdorff \'{e}tale groupoid model of such a $C^*$-algebra, if it exists,  is necessarily not ample and has no locally contracting property. Therefore, in order to find groupoid models for a larger class of purely infinite $C^*$-algebras including such projectionless $C^*$-algebras, one needs to come up with a new framework. Motivated by this, in this paper,  we introduce a new dynamical approach based on a property called \textit{dynamical comparison}. The concept of dynamical comparison was introduced by Wilhelm Winter in 2012 and refined by David Kerr in \cite{D} to study actions of amenable groups on compact metrizable spaces. Then in \cite{M2}, the author studied dynamical comparison and introduced \textit{paradoxical comparison} for actions of non-amenable groups on compact Hausdorff spaces to obtain several dynamical criteria establishing the pure infiniteness for unital reduced crossed product $C^*$-algebras. In \cite{M3}, the author introduced a new semigroup, called \textit{the generalized type semigroup}, as a generalization of the type semigroup dating back to Tarski, to study dynamical comparison. It was shown in \cite{M3} that the dynamical comparison  relates to the almost unperforation of the generalized type semigroup. Motivated by the success of the comparison defined on compact spaces, we generalize and apply ideas of comparison to locally compact settings by studying locally compact Hausdorff \'{e}tale groupoids. In this paper, we will introduce a groupoid version of dynamical comparison, called \textit{groupoid comparison}, two types of pure infiniteness, and paradoxical comparison (see Definition 3.4, 3.5, 3.6 and 3.7). As one can see in Section 3, many examples of groupoids themselves in the literature  are purely infinite in our sense.  Therefore our approach would  serve as a unified and new framework for many attempts in the literature on the study of pure infiniteness of reduced groupoid $C^*$-algebras. On the other hand,  in \cite{Matui2} Matui  also introduced the notion of pure infiniteness for locally compact Hausdorff \'{e}tale ample groupoids on compact spaces to study the topological full group of the groupoid generated by one-sided shifts of finite type. We show in section 5 that our pure infiniteness is a higher dimensional generalization of his pure infiniteness. 
 
 We say $C_0(\GU)$ separates ideals of $C_r^*(\CG)$ if the (surjective) map $I \mapsto I\cap C_0(\GU)$ from ideals in $C^*_r(\CG)$ to ideals in $C_0(\GU)$ generated by $\CG$-invariant closed sets is injective. It was shown in \cite{B-L} that if $\CG$ is an amenable locally compact Hausdorff \'{e}tale groupoid then $C_0(\GU)$ separates ideals of $C_r^*(\CG)$ if and only if $\CG$ is essentially principal. This result is a groupoid generalization on the ideal structure for reduced crossed product $C^*$-algebras of dynamical systems obtained by Sierakowski in \cite{S}. The following is our first main result. 
 
 \begin{customthm}{A}(Theorem 6.8)
 		Let $\CG$ be a locally compact Hausdorff \'{e}tale groupoid such that $C_0(\GU)$ separates ideals of $C_r^*(\CG)$.   If $\CG$ is purely infinite  and all open sets in $\GU$ are groupoid small in the sense of Definition 6.6 then $C^*_r(\CG)$ is purely infinite. 
 \end{customthm}

As main applications, we have the following corollaries.

\begin{cor}
	Let $\CG$ be an amenable locally compact Hausdorff \'{e}tale essentially principal second countable  groupoid with finitely many $\CG$-invariant closed sets in $\GU$. If $\CG$ is purely infinite then $C^*_r(\CG)$ is nuclear separable and strongly purely infinite.
\end{cor}

On the other hand when $\CG$ is minimal then the condition that $C_0(\GU)$ separates ideals of $C_r^*(\CG)$ in the Theorem holds trivially. In addition, there is no non-trivial $\CG$-invariant closed set in $\GU$. It was also proved in \cite{Renault} as well as \cite{B-L} that if $\CG$ is minimal and topologically principal then $C^*_r(\CG)$ is simple. Therefore, we also have the following corollary for the case that $\CG$ is minimal without the assumption that $\CG$ is amenable.

\begin{cor}
		Let $\CG$ be a locally compact Hausdorff \'{e}tale minimal topological principal groupoid. If $\CG$ is purely infinite  then $C^*_r(\CG)$ is simple and (strongly) purely infinite. 
\end{cor}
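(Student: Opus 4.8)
The plan is to obtain this purely as a specialization of Theorem A (Theorem 6.9) once the hypotheses there are checked against the minimal, topologically principal setting. First I would record that minimality of $\CG$ means the only $\CG$-invariant closed subsets of $\GU$ are $\emptyset$ and $\GU$; in particular there are only finitely many of them, so that hypothesis of Theorem A is automatic. Consequently the lattice of ideals of $C_0(\GU)$ generated by $\CG$-invariant closed sets is just $\{0, C_0(\GU)\}$, and the (surjective) assignment $I \mapsto I \cap C_0(\GU)$ from ideals of $C^*_r(\CG)$ onto this lattice is injective the moment $C^*_r(\CG)$ is simple; thus the requirement that $C_0(\GU)$ separate ideals of $C^*_r(\CG)$ also holds trivially. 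Finally, since the only nonempty $\CG$-invariant closed subset of $\GU$ is $\GU$ itself, topological principality of $\CG$ coincides here with essential principality, so $\CG$ is essentially principal.

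Next I would invoke the simplicity criterion: a minimal, topologically principal, locally compact Hausdorff \'{e}tale groupoid has simple reduced $C^*$-algebra, as proved in \cite{Renault} and again in \cite{B-L}. With this in hand, all hypotheses of Theorem A are verified, so the paradoxical comparison assumption yields that $C^*_r(\CG)$ is purely infinite; and when $\CG$ is in addition second countable, Theorem A gives that $C^*_r(\CG)$ is strongly purely infinite.

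To justify the parenthetical ``(strongly)'' without assuming second countability, I would appeal to the fact recalled in the introduction that for a simple $C^*$-algebra pure infiniteness and strong pure infiniteness coincide \cite{Kir-Rord}. Hence $C^*_r(\CG)$, being simple and purely infinite, is automatically strongly purely infinite in all cases. I do not anticipate any genuine obstacle: the only care needed is to match the terminology (``topologically principal'' versus ``essentially principal'') and to notice that minimality collapses the ideal-separation hypothesis of Theorem A to a triviality; all of the analytic content is already contained in Theorem 6.9.
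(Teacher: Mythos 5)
Your proposal is correct and follows essentially the same route as the paper: specialize Theorem 6.9 by noting that minimality leaves only the trivial $\CG$-invariant closed sets and (together with simplicity from Renault/B\"onicke--Li for minimal topologically principal groupoids) makes the ideal-separation hypothesis automatic, then upgrade to strong pure infiniteness via the Kirchberg--R{\o}rdam fact that simple purely infinite $C^*$-algebras are strongly purely infinite. Your verification of the ideal-separation hypothesis via simplicity is, if anything, slightly more careful than the paper's own remark that it ``holds trivially'' under minimality.
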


It will be shown in Section 5 that our pure infiniteness is equivalent to groupoid comparison if $\CG$ is minimal and there is no $\CG$-invariant probability Borel regular measure on $\GU$. Then, as an application of Corollary 1.2, we have the following dichotomy as our second main result.

 \begin{customthm}{B}(Theorem 6.11)
 Let $\CG$ be a locally compact Hausdorff \'{e}tale minimal topological principal groupoid. Suppose $\CG$ has groupoid comparison.  Then $C^*_r(\CG)$ is either stably finite or strongly purely infinite.
 \end{customthm}

We introduce in Section 4 a new algebraic tool called \textit{groupoid semigroup} for a groupoid $\CG$, denoted by $\CW(\CG)$, which has its own interest to be studied because various comparison properties in groupoids can be coded within this semigroup, which is similar to how Cuntz semigroup codes the strict comparison in the $C^*$-setting. From this philosophy we establish in Section 5 the relation among our various comparison properties and pure infiniteness. In addition, we also use the groupoid semigroup to study the type semigroup of the groupoid $\CG$, denoted by $\CV(\CG)$, when $\CG$ is ample. While our groupoid semigroup can be regarded as an analogue of Cuntz semigroup,  the type semigroup plays a role like Murray-von Neumann semigroup.  From this direction, we have the following result.

\begin{customthm}{C}(Theorem 5.15)
		Let $\CG$ be a locally compact Hausdorff \'{e}tale ample groupoid.  Consider the following conditions.
	\begin{enumerate}[label=(\roman*)]
		\item $\CG$ is purely infinite.
		
		\item $\CG$ has paradoxical comparison.
		
		\item Every clopen set  in $\GU$ is $(2, 1)$-paradoxical in the sense of \cite{B-L}.
		
		\item $\CW(\CG)$ is purely infinite.
		
		\item $\CV(\CG)$ is purely infinite.
		
		\item $\CW(\CG)$ is almost unperforated and there is no non-trivial state on $\CW(\CG)$.
		
		\item $\CV(\CG)$ is almost unperforated and there is no non-trivial state on $\CV(\CG)$.
		
		\item $\CG$ is weakly purely infinite.
	\end{enumerate}
	Then (i)-(vii) are equivalent in general. If $\CG$ is furthermore assumed to have no global fixed unit in $\GU$, then all conditions above are equivalent to (viii).
\end{customthm}

In light of Theorem C, our Theorem B is a generalization of the dichotomy on stably finiteness against pure infiniteness obtained in \cite{B-L} and \cite{Ra-Sims}. 

 Finally, it is also interesting to investigate the range of our  pure infiniteness as a groupoid regularity property. We address this question by comparing our pure infiniteness (or paradoxical comparison) with $n$-filling and locally contracting property introduced in the literature for locally compact Hausdorff \'{e}tale groupoids. We are particularly interested in comparing our pure infiniteness with locally contracting property because it is one of the most prevalent notion establishing pure infiniteness of reduced groupoid $C^*$-algebras. First, as we will show in Section 3, a lot of examples of groupoids in the literature satisfying locally contracting are purely infinite. But it is not clear in general whether locally contracting implying pure infiniteness. On the other hand, we will show in Section 5 that  our pure infiniteness implies locally contracting in the case of locally compact Hausdorff \'{e}tale minimal ample groupoids on  compact  spaces. However, we discover in this paper that pure infiniteness does not imply locally contracting in general by establishing the following as our final main result. This result also shows that our pure infiniteness covers examples beyond the realm of locally contracting property.

\begin{customthm}{D}(Theorem 6.15)
	There exists a non-simple strongly purely infinite $C^*$-algebra $A$, for example, $\CO_2\otimes C_0(\R)$, which has no locally compact Hausdorff \'{e}tale locally contracting groupoid model but there is a  locally compact Hausdorff \'{e}tale purely infinite groupoid  $\CG$  such that $A\simeq C^*_r(\CG)$.
	\end{customthm}

For groupoids on compact spaces, we will show in Section 5 that $n$-filling is equivalent to our pure infiniteness in the minimal ample case. Therefore, our pure infiniteness is also a generalization of the $n$-filling property. Using our pure infiniteness as a bridge, we show in Section 5 that the $n$-filling property implies locally contracting in the case that groupoid is minimal ample. This answers a question in \cite{Ra-Sims}. See the sentence before Remark 9.5 in \cite{Ra-Sims}. 

Our paper is organized in the following way. In Section 2, we review some necessary concepts, definitions and preliminary results. In Section 3, we introduce groupoid comparison, paradoxical comparison and pure infiniteness of groupoids. We also list many examples of purely infinite groupoids there. In Section 4, we introduce the groupoid semigroup and list many fundamental properties of it. In Section 5, we study the relation among various notions with paradoxical flavor appeared in this paper or other literature by using the groupoid semigroup and other tools. In addition, we will study the type semigroup for ample groupoids. In Section 6, we will study reduced groupoid $C^*$-algebras by using pure infiniteness of the groupoids.

\section{Preliminaries}
In this section we recall some basic backgrounds on \'{e}tale groupoids and $C^*$-algebras. We refer to \cite{Renault} and \cite{Sims} as standard references for groupoids. 

\begin{defn}
	A \textit{groupoid} $\CG$ is a set equipped with a distinguished subset $\CG^{(2)}\subset \CG\times \CG$, called the set of \textit{composable pairs}, a product map $\CG^{(2)}\rightarrow \CG$, denoted by $(\gamma , \eta)\mapsto \gamma\eta$ and an inverse map $\CG \rightarrow \CG$, denoted by $\gamma\mapsto \gamma^{-1}$ such that the following hold
\begin{enumerate}[label=(\roman*)]
\item If $(\alpha, \beta)\in \CG^{(2)}$ and $(\beta, \gamma)\in \CG^{(2)}$ then so are $(\alpha\beta, \gamma)$ and $(\alpha, \beta\gamma)$. In addition, $(\alpha\beta)\gamma=\alpha(\beta\gamma)$ holds in $\CG$.

\item For all $\alpha \in \CG$ one has $(\gamma, \gamma^{-1})\in \CG^{(2)}$ and $(\gamma^{-1})^{-1}=\gamma$. 

\item For any $(\alpha, \beta)\in \CG^{(2)}$ one has $\alpha^{-1}(\alpha\beta)=\beta$ and $(\alpha\beta)\beta^{-1}=\alpha$.
\end{enumerate}	
Every groupoid is equipped with a subset $\GU=\{\gamma\gamma^{-1}: \gamma\in \CG\}$ of $\CG$. We refer to elements of $\GU$ as \textit{units} and to $\GU$ itself as the \textit{unit space}. We define two maps $s, r: \CG\rightarrow \GU$ by $s(\gamma)=\gamma^{-1}\gamma$ and $r(\gamma)=\gamma\gamma^{-1}$, respectively, in which $s$ is called the \textit{source} map and $r$ is called the \textit{range} map.
\end{defn}

When a groupoid $\CG$ is endowed with a locally compact Hausdorff topology under which the product and inverse maps are continuous, the groupoid $\CG$ is called a locally compact Hausdorff groupoid. A locally compact Hausdorff groupoid $\CG$ is called \textit{\'{e}tale} if the range map $r$ is a local homeomorphism, which means for any $\gamma\in \CG$ there is an open neighborhood $U$ of $\gamma$ such that $r(U)$ is open and $r|_U$ is a homeomorphism. It can be verified that if $r$ is a local homeomorphism then so is the source map $s$. An open set $U$ in $\CG$ is called an open \textit{bisection} if the restriction of the source map $s|_U: U\rightarrow s(U)$ and the range map $r|_U: U\rightarrow r(U)$ on $U$ are both homeomorphisms onto open subsets of $\GU$. It is not hard to see a locally compact Hausdorff groupoid is \'{e}tale if and only if its topology has a basis consisting of open bisections. We say a  locally compact Hausdorff \'{e}tale groupoid $\CG$ is \textit{ample} if its topology has a basis consisting compact open bisections.  

\begin{Exl}
	Let $X$ be a locally compact Hausdorff space and $\Gamma$ be a discrete group. Then any continues action $ \Gamma\curvearrowright X$ induces a locally compact Hausdorff \'{e}tale groupoid 
	\[\CG_{\Gamma\curvearrowright X}\coloneqq\{(\gamma x, \gamma, x), \gamma\in\Gamma, x\in X\}\]
equipped with the relative topology as a subset of $X\times \Gamma\times X$. In addition, $(\gamma x, \gamma ,x)$ and $(\beta y, \beta, y)$ are composable only if $\beta y=x$ and  
\[(\gamma x, \gamma ,x)(\beta y, \beta, y)=(\gamma\beta y, \gamma\beta y, y ).\]
One also defines $(\gamma x, \gamma, x)^{-1}=(x, \gamma^{-1}, \gamma x)$ and announces that $\GU\coloneqq \{(x, e_\Gamma, x): x\in X\}$. It is not hard to verify that $s(\gamma x, \gamma, x)=x$ and $r(\gamma x, \gamma, x)=\gamma x$. The groupoid $\CG_{\Gamma\curvearrowright X}$ is called a \textit{transformation groupoid}.
\end{Exl}

The following are several basic properties of locally compact Hausdorff \'{e}tale groupoids whose proofs could be found in \cite{Sims}.

\begin{prop}
	Let $\CG$ be a locally compact Hausdorff \'{e}tale groupoid. Then $\GU$ is a clopen set in $\CG$.
\end{prop}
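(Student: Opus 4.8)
The plan is to establish the two halves of the statement separately---that $\GU$ is closed in $\CG$, and that $\GU$ is open in $\CG$---using only the groupoid axioms of Definition~2.1, the Hausdorff assumption, and the defining property of \'{e}taleness that the range map $r$ is a local homeomorphism. (The source map $s$ could be used interchangeably throughout.)

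For closedness I would first record the clean description $\GU=\{\gamma\in\CG: r(\gamma)=\gamma\}$. The inclusion $\supseteq$ is immediate from the definition of $\GU$; for $\subseteq$, writing $\gamma=\eta\eta^{-1}$ and running a short computation with the axioms---in particular $(\eta\eta^{-1})^{-1}=\eta\eta^{-1}$ and $\eta\eta^{-1}\eta=\eta$---gives $\gamma\gamma=\gamma$, whence $r(\gamma)=\gamma\gamma^{-1}=\gamma\gamma=\gamma$. Since $r$, viewed as a map $\CG\to\CG$, and $\id_\CG$ are continuous and $\CG$ is Hausdorff, the equalizer $\GU$ of these two maps is closed.

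For openness, fix a unit $u\in\GU$. Since $r$ is a local homeomorphism, pick an open neighbourhood $U$ of $u$ in $\CG$ on which $r$ is injective, and put $V\coloneqq U\cap r^{-1}(U)$. Then $V$ is open ($U$ is open and $r$ is continuous) and $u\in V$ since $r(u)=u\in U$. The crux is that $V\subseteq\GU$: if $\gamma\in V$ then both $\gamma$ and $r(\gamma)$ lie in $U$, and as $r(\gamma)$ is a unit we have $r(r(\gamma))=r(\gamma)$, so $\gamma$ and $r(\gamma)$ have the same image under the injective map $r|_U$; hence $\gamma=r(\gamma)\in\GU$. As $u$ was arbitrary, $\GU$ is open, and together with the first part it is clopen.

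The only genuinely delicate point is the openness half, and the idea there is to resist the temptation to push $U$ forward along $r$---its image $r(U)$ is a priori only open in the subspace $\GU$, so that route is circular---and instead to pull $U$ back under $r$ and intersect. Everything else is routine, though before invoking the characterization $\GU=\{\gamma: r(\gamma)=\gamma\}$ I would carefully verify the identity $\gamma\gamma=\gamma$ for $\gamma\in\GU$ directly from the three axioms, as that is the one spot where a slip could creep in.
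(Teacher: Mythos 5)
Your proof is correct: the equalizer argument (with the computation showing $\GU=\{\gamma:r(\gamma)=\gamma\}$, where the step $\gamma^{-1}=\gamma$, equivalently $(\gamma\gamma)\gamma^{-1}=\gamma$, is indeed derivable from axioms (ii)--(iii) as you indicate) gives closedness from Hausdorffness, and the pullback trick $V=U\cap r^{-1}(U)$ with $r|_U$ injective correctly gives openness without the circularity you rightly flag. The paper itself offers no proof of this proposition, deferring to the reference \cite{Sims}, and your argument is essentially the standard one found there, so there is nothing further to reconcile.
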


\begin{prop}
Let $\CG$ be a locally compact Hausdorff \'{e}tale groupoid. Suppose $U$ and $V$ are open bisections in $\CG$. Then $UV=\{\alpha\beta\in \CG: (\alpha, \beta)\in \CG^{(2)}\cap U\times V\}$ is also an open bisection.
\end{prop}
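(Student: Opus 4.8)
The plan is to realize $UV$ as the image of a continuous section of the source map $s$ defined on an open subset of $\GU$, and then invoke the general principle that a continuous section of a local homeomorphism is an open topological embedding. Since $\CG$ is \'{e}tale, $s$ is a local homeomorphism, which is exactly what makes this principle applicable.

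First I would isolate that principle: if $p\colon E\to B$ is a local homeomorphism, $D\subseteq B$ is open, and $\sigma\colon D\to E$ is continuous with $p\circ\sigma=\mathrm{id}_D$, then $\sigma(D)$ is open in $E$ and $\sigma$ is a homeomorphism of $D$ onto $\sigma(D)$. The verification is short: given $x_0\in D$, pick an open $O\ni\sigma(x_0)$ with $p|_O$ a homeomorphism onto the open set $p(O)$; by continuity of $\sigma$ there is an open $D_0$ with $x_0\in D_0\subseteq D\cap p(O)$ and $\sigma(D_0)\subseteq O$; then $p(\sigma(x))=x$ for $x\in D_0$ forces $\sigma|_{D_0}=(p|_O)^{-1}|_{D_0}$, which is an open embedding. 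Hence $\sigma(D)$ is open, $\sigma$ is injective (because $x=p(\sigma(x))$), and $\sigma$ is a local homeomorphism onto $\sigma(D)$, so a homeomorphism.

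Next I would build the section. Since $s|_V\colon V\to s(V)$ and $s|_U\colon U\to s(U)$ are homeomorphisms onto open subsets of $\GU$, the set $D:=\{x\in s(V):r\big((s|_V)^{-1}(x)\big)\in s(U)\}$ is open, being the preimage of the open set $s(U)$ under the continuous map $x\mapsto r\big((s|_V)^{-1}(x)\big)$. For $x\in D$ set $\beta_x:=(s|_V)^{-1}(x)\in V$ and $\alpha_x:=(s|_U)^{-1}\big(r(\beta_x)\big)\in U$; then $(\alpha_x,\beta_x)\in\CG^{(2)}$, so $\sigma(x):=\alpha_x\beta_x$ defines a continuous map $\sigma\colon D\to\CG$ with $\sigma(D)\subseteq UV$ and $s(\sigma(x))=s(\beta_x)=x$. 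Conversely, every $\gamma=\alpha\beta\in UV$ equals $\sigma(x)$ for $x:=s(\gamma)=s(\beta)\in s(V)$: indeed $(s|_V)^{-1}(x)=\beta$, so $r(\beta)=s(\alpha)\in s(U)$ and hence $x\in D$, and $(s|_U)^{-1}(r(\beta))=\alpha$; uniqueness of $x$ is clear since $s\circ\sigma=\mathrm{id}_D$. Thus $\sigma$ is a continuous section of $s$ with image exactly $UV$.

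Finally I would conclude: by the principle above, $UV=\sigma(D)$ is open in $\CG$ and $s|_{UV}=\sigma^{-1}$ is a homeomorphism onto the open set $D\subseteq\GU$. For the range map, $r|_{UV}=(r\circ\sigma)\circ s|_{UV}$, and $r\circ\sigma\colon D\to\GU$, $x\mapsto r(\alpha_x)=r\big((s|_U)^{-1}(r((s|_V)^{-1}(x)))\big)$, is a composition of homeomorphisms between open subsets of $\GU$ (using that $U,V$ are bisections, so $r|_U,r|_V$ are homeomorphisms too), hence a homeomorphism onto an open set. Therefore $r|_{UV}$ is a homeomorphism onto an open subset of $\GU$, so $UV$ is an open bisection. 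The one genuinely non-formal point is the openness of $UV$; the rest is bookkeeping with the four homeomorphisms $s|_U,s|_V,r|_U,r|_V$. If one prefers, an alternative is to quote that multiplication $\CG^{(2)}\to\CG$ is an open map for \'{e}tale $\CG$, whence $UV$ is the image of the open set $(U\times V)\cap\CG^{(2)}$, and then only the injectivity of $s|_{UV}$ and $r|_{UV}$ remains, which is immediate from the cancellation axioms.
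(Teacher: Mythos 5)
Your proof is correct. Note that the paper offers no argument of its own for this proposition: it is listed among the ``basic properties\dots whose proofs could be found in \cite{Sims}'', so there is no in-paper proof to compare against. Your argument --- realizing $UV$ as the image of a continuous section $\sigma$ of the source map over the open set $D=\{x\in s(V): r((s|_V)^{-1}(x))\in s(U)\}$ and invoking the fact that a continuous section of a local homeomorphism is an open embedding --- is complete and is essentially the standard proof; the only facts used implicitly, namely $s(\alpha\beta)=s(\beta)$, $r(\alpha\beta)=r(\alpha)$, and that $(\alpha,\beta)\in\CG^{(2)}$ precisely when $s(\alpha)=r(\beta)$, are elementary groupoid identities, and your identification $\sigma(D)=UV$ together with $r|_{UV}=(r\circ\sigma)\circ s|_{UV}$ correctly yields that both $s|_{UV}$ and $r|_{UV}$ are homeomorphisms onto open subsets of $\GU$. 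One small caveat on the alternative you sketch at the end: injectivity of $s|_{UV}$ and $r|_{UV}$ is not purely a consequence of the cancellation axioms but uses that $U$ and $V$ are bisections (e.g.\ $s(\alpha_1\beta_1)=s(\alpha_2\beta_2)$ gives $s(\beta_1)=s(\beta_2)$, whence $\beta_1=\beta_2$ only because $s|_V$ is injective), and one still needs openness of $s$ and $r$ (they are local homeomorphisms) to upgrade an injective continuous restriction to a homeomorphism onto an open set; with those points made, that route works as well.
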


\begin{prop}
	Let $\CG$ be a locally compact Hausdorff \'{e}tale groupoid. Let $\gamma\in \CG$ be an element such that $s(\gamma)\neq r(\gamma)$. Then there is an open bisection $O$ containing $\gamma$ such that $s(O)\cap r(O)=\emptyset$.
\end{prop}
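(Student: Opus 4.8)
The plan is to exploit two facts: $\CG$ is Hausdorff, so its unit space $\GU$ is a Hausdorff topological space, and the topology of $\CG$ has a basis of open bisections. First I would separate the source and range of $\gamma$ inside the unit space: since $s(\gamma)$ and $r(\gamma)$ are distinct points of the Hausdorff space $\GU$, there are open sets $U, V \subseteq \GU$ with $s(\gamma) \in U$, $r(\gamma) \in V$ and $U \cap V = \emptyset$.

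Next I would choose, using that $\CG$ is \'{e}tale, an open bisection $W$ with $\gamma \in W$, and then shrink it by putting $O := W \cap s^{-1}(U) \cap r^{-1}(V)$. Continuity of the structure maps $s$ and $r$ shows that $O$ is open, and $\gamma \in O$ since $s(\gamma) \in U$ and $r(\gamma) \in V$. Because $O$ is an open subset of the open bisection $W$, it is again an open bisection: $s|_O$ and $r|_O$ are the restrictions of the homeomorphisms $s|_W$ and $r|_W$ to the open subset $O$, hence are themselves homeomorphisms onto open subsets of $\GU$. Finally, $s(O) \subseteq U$ and $r(O) \subseteq V$ force $s(O) \cap r(O) \subseteq U \cap V = \emptyset$, so $O$ is the desired open bisection containing $\gamma$.

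I do not anticipate any serious obstacle; the statement is a routine consequence of Hausdorffness of $\CG$ combined with the \'{e}tale property, and does not require the earlier propositions on $\GU$ being clopen or on products of bisections. The only points worth an explicit line are that $s^{-1}(U)$ and $r^{-1}(V)$ are open (continuity of $s$ and $r$ for a topological groupoid) and that an open subset of an open bisection is an open bisection (a homeomorphism restricts to a homeomorphism on any open subset). A minor variant would be to first produce any open bisection through $\gamma$ and only afterwards separate its source and range images inside $\GU$, but this leads to the same construction.
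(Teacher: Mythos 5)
Your proof is correct and is essentially the standard argument: the paper itself omits the proof of this proposition (referring to \cite{Sims}), and the argument given there proceeds in the same way, separating $s(\gamma)$ and $r(\gamma)$ by disjoint open sets via Hausdorffness and shrinking an open bisection through $\gamma$ using continuity of $s$ and $r$. No gaps: the two small points you flag (openness of $s^{-1}(U)$, $r^{-1}(V)$ and the fact that an open subset of an open bisection is an open bisection) are exactly the right ones and are handled correctly.
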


For any set $D\subset \GU$,  Denote by 
\[\CG_D\coloneqq \{\gamma\in \CG: s(\gamma)\in D\},\ \CG^D\coloneqq \{\gamma\in \CG: r(\gamma)\in D\},\ \text{and}\ \ \CG^D_D\coloneqq\CG^D\cap \CG_D.\]
For the singleton case $D=\{u\}$, we write $\CG_u$, $\CG^u$ and $\CG^u_u$ instead for simplicity. Each $\CG^u_u$ is a group, which is called the \textit{isotropy} at $u$. We say a groupoid $\CG$ is \textit{principal} if all isotropy groups are trivial, i.e., $\CG_u^u=\{u\}$ for all $u\in \GU$. We say a groupoid $\CG$ is \textit{topologically principal} if  the set $\{u\in \GU: \CG^u_u=\{u\}\}$ is dense in $\GU$. A subset $D$ in $\GU$ is called $\CG$-\textit{invariant} if $r(\CG D)=D$, which is equivalent to the condition $\CG^D=\CG_D$. Note that 
$\CG|_D\coloneqq \CG_D^D$ is a subgroupoid of $\CG$ with the unit space $D$ if $D$ is a $\CG$-invariant set in $\GU$. A groupoid $\CG$ is called \textit{minimal} if there are no proper non-trivial closed $\CG$-invariant subsets in $\GU$. We say a groupoid $\CG$ is \textit{essentially principal} if the closed subgroupoid $\CG|_D$ is topological principal for every  $\CG$-invariant closed set $D$ in $\GU$.

Let $\CG$ be a locally compact Hausdorff \'{e}tale groupoid. We define a convolution product on $C_c(\CG)$ by 
\[(f*g)(\gamma)=\sum_{\alpha\beta=\gamma}f(\alpha)g(\beta)\] and an involution by
\[f^*(\gamma)=\overline{f(\gamma^{-1})}.\]
These two operations make $C_c(\CG)$ a $*$-algebra. Then the reduced groupoid $C^*$-algebra $C^*_r(\CG)$ is defined to be the completion of $C_c(\CG)$ with respect to the norm $\|\cdot\|_r$ induced by all regular representation $\pi_u$ for $u\in \GU$, where $\pi_u: C_c(\CG)\rightarrow B(\ell^2(\CG_u))$ is defined by $\pi_u(f)\eta=f*\eta$ and $\|f\|_r=\sup_{u\in \GU}\|\pi_u(f)\|$. 
It is well known that there is a $C^*$-algebraic embedding $\iota : C_0(\GU)\rightarrow C^*_r(\CG)$. On the other hand, $E_0: C_c(\CG)\rightarrow C_c(\GU)$ defined by $E_0(a)= a|_{\GU}$ extends to a faithful canonical conditional expectation $E: C^*_r(\CG)\rightarrow C_0(\GU)$ satisfying $E(\iota(f))=f$ for any $f\in C_0(\GU)$ and $E(\iota(f)a\iota(g))=fE(a)g$ for any $a\in C^*_r(\CG)$ and $f, g\in C_0(\GU)$.

As a typical example, it can be verified that for the transformation groupoid in Example 2.2,  the reduced groupoid $C^*$-algebra is  isomorphic to the reduced crossed product $C^*$-algebra of the dynamical system. The following are  some standard facts on reduced groupoid $C^*$-algebras that could be found in \cite{Sims}. Throughout the paper, the notation $\supp(f)$ for a function $f$ on a topological space $X$ denotes the open support $\{x\in X: f(x)\neq 0\}$ of $f$. We say an open set $O$ in a topological space $X$ is \textit{precompact} if $\overline{O}$ is compact.

\begin{prop}
	Let $\CG$ be a locally compact Hausdorff \'{e}tale groupoid. Any $f\in C_c(\CG)$ can be written as a sum $f=\sum_{i=0}^nf_i$ such that  there are precompact open bisections $V_0, \dots, V_n$ such that $V_0\subset \GU$ and $V_i\cap \GU=\emptyset$ for all $0<i\leq n$ as well as  $\overline{\supp(f_i)}\subset V_i$ for any $0\leq i\leq n$.
\end{prop}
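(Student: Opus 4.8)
The plan is to prove this by a routine partition-of-unity argument, the only groupoid-specific inputs being that the unit space $\GU$ is clopen in $\CG$ and that, since $\CG$ is \'{e}tale, its topology has a basis of open bisections, which (by local compactness) we may take precompact.

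First I would set $K := \overline{\supp(f)}$, which is compact since $f \in C_c(\CG)$. For each $\gamma \in K$ I produce a precompact open bisection $W_\gamma \ni \gamma$ such that \emph{either} $W_\gamma \subseteq \GU$ \emph{or} $W_\gamma \cap \GU = \emptyset$. If $\gamma \in \GU$, pick a precompact open bisection $U \ni \gamma$ and set $W_\gamma := U \cap \GU$: this is open in $\CG$ (as $\GU$ is open), precompact (it sits inside $\overline{U}$), and it is an open bisection because $r$ and $s$ restrict to the identity on $\GU$, so every open subset of $\GU$ is an open bisection. If $\gamma \notin \GU$, pick a precompact open bisection $U \ni \gamma$ and set $W_\gamma := U \setminus \GU = U \cap (\CG \setminus \GU)$: this is open (as $\GU$ is closed), precompact, disjoint from $\GU$, and an open bisection as the restriction of $U$ to an open subset.

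Next, by compactness of $K$, finitely many of the $W_\gamma$ cover $K$. Let $V_0$ be the union of those among them contained in $\GU$; this is an open subset of $\GU$, hence an open bisection, and it is precompact as a finite union of precompact sets. (If no member of the cover meets $\GU$, take for $V_0$ any precompact open bisection inside $\GU$; this is possible unless $\GU = \emptyset$, in which case $\CG = \emptyset$ and the statement is trivial.) Enumerate the remaining members of the cover as $V_1, \dots, V_n$, each a precompact open bisection with $V_i \cap \GU = \emptyset$. Thus $\{V_0, V_1, \dots, V_n\}$ is a finite open cover of the compact set $K$ in the locally compact Hausdorff space $\CG$, with $V_0 \subseteq \GU$ and $V_i \cap \GU = \emptyset$ for $0 < i \le n$. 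Now invoke the standard existence of a partition of unity subordinate to a finite open cover of a compact set: there are $\chi_0, \dots, \chi_n \in C_c(\CG)$ with $0 \le \chi_i \le 1$, $\overline{\supp(\chi_i)} \subseteq V_i$, and $\sum_{i=0}^n \chi_i \equiv 1$ on a neighbourhood of $K$. Set $f_i := \chi_i f \in C_c(\CG)$. Since $\sum_i \chi_i = 1$ on $\supp(f) \subseteq K$, we get $f = \sum_{i=0}^n f_i$, while $\overline{\supp(f_i)} \subseteq \overline{\supp(\chi_i)} \subseteq V_i$, which is exactly the asserted decomposition.

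I do not expect a genuine obstacle here; the points needing a moment's care are (i) that an open subset of $\GU$ really is an open bisection (immediate from $r|_{\GU} = s|_{\GU} = \id$), (ii) that $V_0$ remains a \emph{single} open bisection after taking a finite union --- this works precisely because all the pieces lie inside $\GU$, whereas a union of bisections is in general not a bisection --- and (iii) the clean invocation of compactly supported partitions of unity on a locally compact Hausdorff space. So the only real content is the bookkeeping that funnels all the diagonal contributions into the single bisection $V_0 \subseteq \GU$ while keeping the remaining pieces off the unit space.
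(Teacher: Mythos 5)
Your argument is correct, and it is essentially the standard proof this paper relies on (the paper itself states the fact without proof, citing Sims): cover $\overline{\supp(f)}$ by precompact open bisections split along the clopen unit space, merge the pieces inside $\GU$ into the single bisection $V_0$, and apply a subordinate partition of unity. All the delicate points (open subsets of $\GU$ are bisections, the finite union inside $\GU$ stays a bisection, precompactness) are handled correctly, so there is nothing to add.
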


\begin{prop}
		Let $\CG$ be a locally compact Hausdorff \'{e}tale groupoid. Suppose $U, V$ are open bisections and $f, g\in C_c(\CG)$ such that $\overline{\supp(f)}\subset U$ and $\overline{\supp(g)}\subset V$. Then $\overline{\supp(f*g)}\subset U\cdot V$ and for any $\gamma=\alpha\beta\in U\cdot V$ one has $(f*g)(\gamma)=f(\alpha)g(\beta)$.
\end{prop}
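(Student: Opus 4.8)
The plan is to exploit the hypothesis that $U$ and $V$ are open bisections, which forces the series defining $(f*g)(\gamma)$ to have at most one nonzero term for each $\gamma$, and to identify that term explicitly. I would begin from the description of the convolution as $(f*g)(\gamma)=\sum f(\alpha)g(\beta)$, where the sum runs over all factorizations $\gamma=\alpha\beta$ with $(\alpha,\beta)\in\CG^{(2)}$. A summand $f(\alpha)g(\beta)$ can be nonzero only if $\alpha\in\supp(f)\subset U$ and $\beta\in\supp(g)\subset V$; in particular, if $\gamma$ admits no factorization of this kind — equivalently, by the definition of $U\cdot V$, if $\gamma\notin U\cdot V$ — then $(f*g)(\gamma)=0$. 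This already gives $\supp(f*g)\subset U\cdot V$.

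Next I would treat the case $\gamma\in U\cdot V$. The key point is that such a $\gamma$ has a \emph{unique} factorization $\gamma=\alpha\beta$ with $\alpha\in U$ and $\beta\in V$: if $\gamma=\alpha\beta=\alpha'\beta'$ with $\alpha,\alpha'\in U$ and $\beta,\beta'\in V$, then $r(\alpha)=r(\gamma)=r(\alpha')$ forces $\alpha=\alpha'$ since $r|_U$ is injective, and then $\beta=\alpha^{-1}\gamma=\beta'$ by the groupoid axioms. Consequently, for this distinguished pair $(\alpha,\beta)$, every other factorization of $\gamma$ contributes $0$ to the sum: if $f(\alpha')g(\beta')\neq0$ for a factorization $\gamma=\alpha'\beta'$, then $\alpha'\in\supp(f)\subset U$ and $\beta'\in\supp(g)\subset V$, whence $(\alpha',\beta')=(\alpha,\beta)$ by the uniqueness just established. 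Therefore $(f*g)(\gamma)=f(\alpha)g(\beta)$, an identity that holds trivially also when $f(\alpha)g(\beta)=0$. Since $U\cdot V$ is again an open bisection by Proposition~2.4, the factorization $\gamma=\alpha\beta$ with $\alpha\in U$, $\beta\in V$ appearing in the statement is exactly the one just described, and the second assertion follows.

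It remains to upgrade $\supp(f*g)\subset U\cdot V$ to $\overline{\supp(f*g)}\subset U\cdot V$, and this is the step I expect to require the most care. I would put $K=\overline{\supp(f)}$ and $L=\overline{\supp(g)}$, which are compact and satisfy $K\subset U$, $L\subset V$, and consider $K\cdot L$, the image of $\CG^{(2)}\cap(K\times L)$ under the continuous multiplication map. Since $\CG$ is Hausdorff, the set of composable pairs $\CG^{(2)}=\{(\alpha,\beta):s(\alpha)=r(\beta)\}$ is closed in $\CG\times\CG$, being the preimage of the closed diagonal of $\GU$ under $(\alpha,\beta)\mapsto(s(\alpha),r(\beta))$; hence $\CG^{(2)}\cap(K\times L)$ is compact, so $K\cdot L$ is compact and therefore closed. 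As every $\gamma$ with $(f*g)(\gamma)\neq0$ lies in $\supp(f)\cdot\supp(g)\subset K\cdot L$, we conclude $\overline{\supp(f*g)}\subset K\cdot L\subset U\cdot V$. Apart from this point, the argument is routine bookkeeping with the convolution formula; the one genuinely delicate ingredient is this passage to the \emph{closed} supports, since tracking only the open supports yields the inclusion but not its closure.
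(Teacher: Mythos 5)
Your proof is correct. The paper does not actually prove this proposition (it is quoted as a standard fact from \cite{Sims}), and your argument is precisely the standard one: the bisection hypothesis makes the factorization $\gamma=\alpha\beta$ with $\alpha\in U$, $\beta\in V$ unique (injectivity of $r|_U$), so the convolution sum has at most one nonzero term, and the passage to closed supports via compactness of $\CG^{(2)}\cap\bigl(\overline{\supp(f)}\times\overline{\supp(g)}\bigr)$ (using that $\CG^{(2)}$ is closed since $\CG$ is Hausdorff) is exactly the right way to get $\overline{\supp(f*g)}\subset U\cdot V$ rather than just the inclusion of open supports.
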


Let $\CG$ be a locally compact Hausdorff \'{e}tale groupoid. Suppose $U$ is an open bisection and $f\in C_c(\CG)_+$ such that $\overline{\supp(f)}\subset U$. Define functions $s(f), r(f)\in C_0(\GU)$ by $s(f)(s(\gamma))=f(\gamma)$ and $r(f)(r(\gamma))=f(\gamma)$ for $\gamma\in \supp(f)$. Since $U$ is a bisection, so is $\supp(f)$. Then the functions $s(f)$ and $r(f)$ are well-defined functions on $s(\supp(f))$ and $r(\supp(f))$, respectively.  Note that $s(f)=(f^**f)^{1/2}$ and $r(f)=(f*f^*)^{1/2}$.

For Cuntz comparison, we refer to \cite{A-P-T} and \cite{NCP} as standard references. Let $A$ be a $C^\ast$-algebra. We write $M_\infty(A)=\bigcup_{n=1}^\infty M_n(A)$ (viewing $M_n(A)$ as an upper left-hand corner in $M_m(A)$ for $m>n$). Let $a,b$ be two positive elements in $M_n(A)_+$ and $M_m(A)_+$, respectively. Set $a\oplus b= \textrm{diag}(a,b)\in
M_{n+m}(A)_+$, and write $a\precsim_A b$ if there exists a sequence $(r_n)$ in $M_{m,n}(A)$ with $r_n^\ast
br_n\rightarrow a$.  If there is no confusion, we usually omit the subscript $A$ by writing $a\precsim b$ instead. We write $a\sim b$ if $a\precsim b$
and $b\precsim a$. Note that $a^*a\sim aa^*$ for any $a\in A$ and $a\sim a^{1/2}$ for any $a\in A_+$. These show that $s(f)\sim r(f)$ in $C^*_r(\CG)$ for any $f\in C_c(\CG)_+$ with $\overline{\supp(f)}\subset U$ for some open bisection $U$ in $\CG$.

A non-zero positive element $a$ in $A$ is said to be\textit{ properly infinite} if
$a\oplus a\precsim a$. A $C^\ast$-algebra $A$ is said to be \textit{purely infinite} if there are no characters
on $A$ and if, for every pair of positive elements $a,b\in A$ such that $b$ belongs to the closed ideal in $A$ generated by $a$, one has $b\precsim a$. It was proved in \cite{K-Rord} that a $C^\ast$-algebra $A$ is purely infinite if and
only if every non-zero positive element $a$ in $A$ is properly infinite. In addition, in \cite{Kir-Rord}, Kirchberg and R{\o}rdam also introduced a stronger version of pure infiniteness for $C^*$-algebras called \textit{strongly pure infiniteness}. See Definition 5.1 in \cite{Kir-Rord}. We remark that strongly pure infiniteness for a $C^*$-algebra $A$ is equivalent to $A\otimes \CO_\infty\simeq A$ if $A$ is separable and nuclear. It was proved in \cite{P-R} that strongly pure infiniteness is equivalent to pure infiniteness if $A$ has the \textit{ideal property} (IP), which says that projections separate ideals in $A$. This class including purely infinite $C^*$-algebras with real rank zero and thus purely infinite simple $C^*$-algebras. See also \cite{Kir-Rord}.

We refer the definition of ($2$-)\textit{quasitraces} to \cite{B-K}. A quasitrace is a map $\tau$ defined on positive elements of a $C^*$-algebra $A$ by $\tau: A_+\to [0, \infty]$ satisfying 
\begin{enumerate}[label=(\roman*)]
\item $\tau(d^*d)=\tau(dd^*)$ for any $d\in A$.
\item $\tau(a+b)=\tau(a)+\tau(b)$ for all commuting positive elements $a, b\in A_+$.
\end{enumerate}
If a quasitrace $\tau$ can be extended to a quasitrace $\tau_2$ on $M_2(A)_+$ with $\tau_2(a\otimes e_{11})=\tau(a)$ for all $a\in A_+$ then we call $\tau$ a $2$-quasitrace. A quasitrace is called \textit{trivial} if  it takes only the value $0$ and $\infty$. A quasitrace $\tau$ is called \textit{semi-finite} if $\{a\in A: \tau(a^*a)<\infty\}$ is dense in $A$ and \textit{bounded} if $\tau(A_+)\subset [0, \infty)$. Note that a bounded quasitrace is automatically semifinite. A quasitrace $\tau$ is called \textit{faithful} if $\tau(a)>0$ whenever $a\in A_+\setminus \{0\}$. A quasitrace $\tau$ is called \textit{lower semi-countinuous} if $\tau(a)=\sup_{\epsilon>0}\tau((a-\epsilon)_+)$ for $a\in A_+$. A $C^*$-algebra $A$ is called \textit{traceless} if there is no non-trivial lower semi-continuous $2$-quasitrace on $A$. It was shown in Remark 2.27(viii) in \cite{B-K} that a simple $C^*$-algebra $A$ is stably finite if and only if there exists a faithful semi-finite lower semi-continuous 2-quasitrace on $A$. This is a non-unital generalization of the celebrated result of Cuntz on the characterization of stably finiteness on unital simple $C^*$-algebras (see \cite{Cu}).

Finally, throughout the paper, we write $A\sqcup B$ to indicate that the union of sets $A$ and $B$ is a disjoint union. In addition, we denote by $\bigsqcup_{i\in I}A_i$ for the disjoint union of the family $\{A_i: i\in I\}$. In addition, all groupoids under consideration in this paper have infinite unit spaces.

\section{Pure infiniteness and paradoxical comparison}
In this section, we study various properties characterizing pure infiniteness of locally compact Hausdorff \'{e}tale groupoids. We first recall comparison in dynamical systems of discrete groups on locally compact Hausdorff spaces.

\begin{defn}(\cite[Definition 3.1]{D})
		Let $\alpha: \Gamma\curvearrowright X$ be an action of a discrete group $\Gamma$ on a locally compact Hausdorff space $X$. Let $F$ be a compact set in $X$ and $O$ a non-empty open subset of $X$. We write $F\prec O$ if there exists a finite collection $\mathcal{U}$ of open subsets of $X$ which cover $F$, an $s_U\in \Gamma$ for each $U\in \mathcal{U}$ such that the images $s_UU$ for $U\in \mathcal{U}$ are pairwise disjoint subsets of $O$. In addition, for open sets $U,V$, we write $U\prec V$ if $F\prec V$ holds whenever $F$ is a compact subset of $U$.
\end{defn}

 The following definition is a natural groupoid analogue of the subequivalence relation of open sets above. 

\begin{defn}
	Let $\CG$ be a locally compact Hausdorff \'{e}tale groupoid.
	\begin{enumerate}[label=(\roman*)]
		\item Let $K$ be a compact subset of $\CG^{(0)}$ and $V$ an open subset of $\CG^{(0)}$. We  write $K \prec_{\CG} V$ if there are open bisections $A_1, \ldots, A_n$  such that $K \subset \bigcup_{i=1}^{n} s(A_i)$, $\bigsqcup_{i=1}^{n} r(A_i) \subset V$. 
		\item Let $U,V$ be open subsets of $\CG^{(0)}$. We  write $U \prec_{\CG} V$ if $K \prec_{\CG} V$ for every compact subset $K \subset U$. 
	\end{enumerate}
	\end{defn}

\begin{Rmk}
	We remark that for transformation groupoids $\CG_{\Gamma\curvearrowright X}$, our Definition 3.2 for $\CG_{\Gamma\curvearrowright X}$ coincides with Definition 3.1 for its generating dynamical system $\Gamma\curvearrowright X$. 
\end{Rmk}

We say a Borel regular measure $\mu$ on $\GU$ is $\CG$-invariant if $\mu(r(U))=\mu(s(U))$ for any open bisection $U$ in $\CG$. Denote by $M(\CG)$ the set of all probability Borel regular $\CG$-invariant measures on $\GU$. In the case of transformation groupoid $\CG_{\Gamma\curvearrowright X}$ of a dynamical system $\Gamma\curvearrowright X$, we write $M_\Gamma(X)$ instead of $M(\CG_{\Gamma\curvearrowright X})$. The following is a groupoid analogue of the dynamical comparison (see \cite{D} for example).

\begin{defn}
	Let $\CG$ be a locally compact Hausdorff \'{e}tale groupoid. We say $\CG$ has \emph{groupoid comparison} if  $U \prec_{\CG} V$ holds for all open sets $U, V\subset \CG^{(0)}$ satisfying $\mu(U)<\mu(V)$ for all $\mu\in M(\CG)$. If a transformation groupoid $\CG_{\Gamma\curvearrowright X}$ has groupoid comparison, we say $\Gamma\curvearrowright X$ has \textit{dynamical comparison} instead.
\end{defn}

The idea of paradoxicality, dating back to the work of Hausdorff and playing an important role in the work of Banach-Tarski (see \cite{Wagon}), roughly speaking, is that one object somehow contains two disjoint copies of itself. The notions of this flavor have been observed as a key condition implying pure infiniteness of related $C^*$-algebras (see \cite{R-S} and \cite{M2} for example). We now interpret this philosophy in the setting of locally compact Hausdorff \'{e}tale groupoids. Let $\CG$ be such a groupoid.  For any non-empty open sets $U, V$ in $\GU$, we write $U\prec_{\CG, 2} V$ if for any compact set $F\subset U$ there are disjoint non-empty open sets $O_1, O_2\subset V$ such that $F\prec_{\CG} O_1$ and $F\prec_{\CG} O_2$.

\begin{defn}
	Let $\CG$ be a locally compact Hausdorff  \'{e}tale groupoid. We say $\CG$ is \textit{purely infinite} if  for any non-empty open sets $O_1, O_2$ in $\CG^{(0)}$ satisfying $O_1\subset r(\CG O_2)$, one has $O_1\prec_{\CG, 2} O_2$.
\end{defn}
 
\begin{defn}
	Let $\CG$ be a locally compact Hausdorff  \'{e}tale groupoid. We say $\CG$ has \textit{paradoxical comparison} if $O\prec_{\CG, 2} O$ for all non-empty open sets in $\CG^{(0)}$.  
\end{defn} 

Furthermore, we also need the following weak version of pure infiniteness. 

\begin{defn}
	Let $\CG$ be a locally compact Hausdorff \'{e}tale groupoid. We say $\CG$ is \textit{weakly purely infinite} if for any non-empty open sets $O_1, O_2$ in $\CG^{(0)}$ satisfying $O_1\subset r(\CG O_2)$, one has $O_1\prec_{\CG} O_2$.
\end{defn}

Note that Matui (Definition 4.9 in \cite{Matui2}) introduced a notion called pure infiniteness as well for locally compact Hausdorff \'{e}tale ample groupoids on the Cantor set. It is also straightforward to verify that our paradoxical comparison is a generalization of the pure infiniteness in Matui's sense. We will actually show in Section 5 that they are equivalent for locally compact Hausdorff \'{e}tale ample groupoids. In addition, Suzuki (Definition 3.3 in \cite{YS}) also introduced a notion of pure infiniteness for locally compact Hausdorff \'{e}tale  groupoids on compact spaces. His notion is equivalent to Matui's notion when the groupoid is minimal and ample. The following shows that when the groupoid $\CG$ is ample it suffices to consider compact open sets instead of general open sets in Definition 3.6 above.  

\begin{prop}
	Let $\CG$ be a locally compact Hausdorff \'{e}tale  ample groupoid. Then $\CG$ has paradoxical comparison if and only if $O\prec_{\CG, 2} O$ for any non-empty compact open set $O$ in $\GU$.
\end{prop}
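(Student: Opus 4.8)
The plan is to prove the non-obvious implication; one direction is immediate, since if $\CG$ has paradoxical comparison then by Definition 3.6 the relation $O\prec_{\CG,d}O$ holds for \emph{every} non-empty open set in $\GU$, in particular for every non-empty compact open set.

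For the converse, assume $O\prec_{\CG,d}O$ for all non-empty compact open sets $O\subset\GU$, and let $O\subset\GU$ be an arbitrary non-empty open set. Unwinding the definition of $\prec_{\CG,d}$, it suffices to fix a compact set $F\subset O$ and produce disjoint non-empty open sets $O_1,O_2\subset O$ with $F\prec_\CG O_1$ and $F\prec_\CG O_2$. The key step is to trap $F$ between itself and a compact \emph{open} set sitting inside $O$. Since $\GU$ is clopen in $\CG$ (Proposition 2.3) it is a locally compact Hausdorff space, and since $\CG$ is ample it has a basis of compact open sets; hence each point of $F$ admits a compact open neighbourhood contained in $O$, and by compactness of $F$ finitely many of these, say $W_1,\dots,W_m$, cover $F$. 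Adjoining one fixed non-empty compact open subset of $O$ if necessary, we obtain a non-empty compact open set $W\coloneqq\bigcup_{j=1}^{m}W_j$ with $F\subset W\subset O$.

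Now apply the hypothesis to $W$: since $W$ is a non-empty compact open set we have $W\prec_{\CG,d}W$, so, applying the definition of $\prec_{\CG,d}$ to the compact set $F\subset W$, there are disjoint non-empty open sets $O_1,O_2\subset W$ with $F\prec_\CG O_1$ and $F\prec_\CG O_2$. As $W\subset O$, these are the required open subsets of $O$, and since $F$ was an arbitrary compact subset of $O$ this proves $O\prec_{\CG,d}O$. I do not anticipate a genuine obstacle here: the only ingredient beyond unwinding Definitions 3.2 and 3.6 is the elementary fact, which is where ampleness enters, that every compact subset of an open subset of $\GU$ is contained in a compact open subset of that open set.
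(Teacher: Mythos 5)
Your proposal is correct and follows essentially the same route as the paper: both directions reduce to the observation that, by ampleness and compactness, any compact subset $F$ of an open set $O\subset\GU$ can be enclosed in a compact open set $W$ with $F\subset W\subset O$, after which one applies the hypothesis $W\prec_{\CG,d}W$ to get the two disjoint open witnesses inside $O$. Your explicit finite-cover construction of $W$ is just a spelled-out version of the step the paper states without detail.
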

\begin{proof}
	It suffices to show the ``if'' part.  Now, let $U$ be a non-empty open set in $\GU$  and $F\subset U$ be a compact set. Since $\CG$ is ample, there is a basis consisting of compact open sets for the topology on $\GU$. Then because $F$ is compact and $\GU$ is locally compact Hausdorff, there is a compact open set $O$ such that $F\subset O\subset U$. Now since $O\prec_{\CG, 2} O$ holds by assumption, there are two disjoint open sets $O_1\subset O\subset U$ and $O_2\subset O\subset U$ such that  $F\prec_{\CG} O_1$ and $F\prec_{\CG} O_2$. This establishes $U\prec_{\CG, 2} U$ and thus $\CG$ has paradoxical comparison.
\end{proof}

\begin{Rmk}
Compare to \cite{B-L} and \cite{Ra-Sims}, in the case that $\CG$ is ample, for a non-empty compact open set $O$, it can be verified $O\prec_{\CG, 2} O$ if and only if $O$ is $(2 ,1)$-paradoxical in the sense of Definition 4.5 in \cite{B-L}.
Then Proposition 3.8 shows that  our paradoxical comparison is equivalent to that all compact open sets in $\GU$ is $(2 ,1)$-paradoxical if $\CG$ is ample.
\end{Rmk}

Let $\CG$ be a locally compact Hausdorff \'{e}tale groupoid. Recall that we have assumed in Section 2 that $\GU$ is infinite. Then, we remark that $\CG$ has the groupoid comparison and $M(\CG)=\emptyset$ if and only if $U\prec_{\CG} V$ holds for any non-empty open sets $U, V$ in $\GU$. To see this, it suffices to show that if $U\prec_{\CG} V$ holds for any non-empty open sets $U, V$ in $\GU$ then $M(\CG)$ is empty. Indeed, since $\GU$ is Hausdorff, there are two disjoint non-empty open sets $O_1, O_2$ such that $\GU\prec_{\CG} O_i$ for $i=1, 2$. For $O_1$ and any compact set $F\subset \GU$, because $F\prec_{\CG} O_1$, there are open bisections $U_1, \dots, U_n$ such that $F\subset \bigcup_{j=1}^ns(U_j)$ and $\bigsqcup_{j=1}^nr(U_j)\subset O_1$. Now suppose $\mu\in M(\CG)$, one has
\[\mu(F)\leq \sum_{j=1}^n\mu(s(U_j))=\sum_{j=1}^n\mu(r(U_j))\leq \mu(O_1).\]
Because $\mu$ is regular, one actually has $1=\mu(\GU)\leq \mu(O_1)\leq 1$ and thus $\mu(O_1)=1$. Then the same method also shows that $\mu(O_2)=1$. This is a contradiction because $O_1$ and $O_2$ are disjoint. In addition, groupoid comparison in the case $M(\CG)=\emptyset$ implies that the unit space $\GU$ is \textit{perfect} in the sense that there is no isolated units. Indeed, it is not hard to observe the cardinality inequality $|F|\leq |O|$ for every compact set $F$ and non-empty open set $O$ satisfying $F\prec_{\CG} O$. Now, suppose there is an open set $O$ whose cardinality is one. Let $F$ be a compact set consisting exactly two units. Let $U$ be an open set such that $F\subset U$. Since  $\CG$ is assumed to have groupoid comparison and $M(\CG)=\emptyset$, one has $U\prec_{\CG} O$ and thus $F\prec_{\CG} O$. But this is a contradiction because $|F|=2>1=|O|$. Now we have the following preliminary result.
\begin{lem}
	Let $\CG$ be a locally compact Hausdorff \'{e}tale  groupoid. If $\CG$ has groupoid comparison and $M(\CG)=\emptyset$ then $\CG$ is minimal and purely infinite.
\end{lem}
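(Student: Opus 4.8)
The plan is to exploit the fact that when $M(\CG)=\emptyset$ the defining condition of groupoid comparison becomes vacuous. Since there is no $\CG$-invariant probability measure on $\GU$, the requirement ``$\mu(U)<\mu(V)$ for all $\mu\in M(\CG)$'' is satisfied by \emph{every} pair of open subsets of $\GU$, so groupoid comparison yields $\GU\prec_{\CG}V$ for every non-empty open $V\subseteq\GU$; equivalently, $K\prec_{\CG}V$ for every compact $K\subseteq\GU$ and every non-empty open $V\subseteq\GU$. This is the only consequence of the hypotheses that I will use.

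First I would derive minimality. Suppose $D$ is a proper non-empty closed $\CG$-invariant subset of $\GU$ and pick $x\in D$. Applying the observation above with $K=\{x\}$ and $V=\GU\setminus D$ produces open bisections $A_1,\dots,A_n$ with $x\in\bigcup_i s(A_i)$ and $\bigcup_i r(A_i)\subseteq\GU\setminus D$; for an index $i$ with $x\in s(A_i)$, the unique $\gamma\in A_i$ with $s(\gamma)=x$ satisfies $\gamma\in\CG_D=\CG^D$, hence $r(\gamma)\in D$, contradicting $r(\gamma)\in r(A_i)\subseteq\GU\setminus D$. Next I would show $\GU$ has no isolated points. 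If $\{x\}\subseteq\GU$ is open, then $\CG_{\{x\}}=s^{-1}(\{x\})$ is open and meets every open bisection in at most one point; since $r$ is an open map, it follows that every point of the orbit $r(\CG_{\{x\}})$ is isolated in $\GU$. By the first paragraph the orbit meets every non-empty open set, so the set of isolated points of $\GU$ is dense; it is also open and $\CG$-invariant, so its complement is a closed $\CG$-invariant set, necessarily empty by minimality. Thus $\GU$ is discrete, hence finite by compactness, and then the normalised counting measure on $\GU$ is $\CG$-invariant (an open bisection restricts to a bijection between its source and its range), contradicting $M(\CG)=\emptyset$. Consequently every non-empty open subset of $\GU$ contains two disjoint non-empty open subsets.

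Finally, for pure infiniteness let $O_1,O_2\subseteq\GU$ be non-empty open with $O_1\subseteq r(\CG O_2)$, and let $F\subseteq O_1$ be compact. Choose disjoint non-empty open $W_1,W_2\subseteq O_2$; the first paragraph gives $F\prec_{\CG}W_1$ and $F\prec_{\CG}W_2$, so $O_1\prec_{\CG, d}O_2$ and $\CG$ is purely infinite (the hypothesis $O_1\subseteq r(\CG O_2)$ is not even needed). Taking $O_1=O_2=O$ and using $O\subseteq r(\CG O)$ then gives paradoxical comparison. I expect the isolated-point step to be the main obstacle: the paradoxical subdivision requires each non-empty open subset of $\GU$ to split into two disjoint non-empty open pieces, and the only route I see to guarantee this from the hypotheses is the minimality-plus-compactness detour above, which otherwise manufactures a $\CG$-invariant probability measure.
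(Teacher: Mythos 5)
Your proof is correct and follows essentially the same route as the paper: both start from the observation that $M(\CG)=\emptyset$ makes the measure condition in groupoid comparison vacuous, so $U\prec_{\CG}V$ holds for all open sets, deduce minimality from the resulting density of orbits, and then obtain $U\prec_{\CG,d}V$ by splitting the target set into two disjoint non-empty open pieces. The one place you diverge, the isolated-point detour, is actually a refinement: the paper simply asserts that since $\GU$ is Hausdorff one can choose two disjoint non-empty open subsets of any non-empty open $V$, which is false when $V$ is an open singleton, whereas you rule this case out by showing that an isolated point would make the (dense, invariant) set of isolated points all of $\GU$, so $\GU$ would be discrete and, by the compactness built into Definition 3.4, finite, producing the normalised counting measure in $M(\CG)$ and a contradiction. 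So your argument fills a small gap the paper leaves implicit; everything else, including the reduction of paradoxical comparison to pure infiniteness via $O\subset r(\CG O)$, matches the paper's proof.
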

\begin{proof}
Suppose that $\CG$ has groupoid comparison and $M(\CG)=\emptyset$. Then for any non-empty open sets $U, V$ in $\GU$ one has $U\prec_{\CG} V$. This shows that for any unit $u$ and any non-empty open set $O$ in $\GU$ one has $\{u\}\prec_{\CG} O$ because for any open neighborhood $W$ of $u$, one has $W\prec_{\CG} O$ by our assumption. Therefore, there is a $\gamma\in \CG$ such that $s(\gamma)=u$ and $r(\gamma)\in O$ and thus $\CG$ is minimal. Thus, $U\subset r(\CG V)=\GU$ holds trivially for any open sets $U, V$ in $\GU$. Now since $\GU$ is Hausdorff and perfect, one can choose two disjoint non-empty open sets $V_1, V_2\subset V$. Then the fact that $U\prec_{\CG} V_1$ and $U\prec_{\CG} V_2$ implies that $U\prec_{\CG, 2} V$, which shows that $\CG$ is purely infinite.
\end{proof}

To close this section, we list several  natural examples of locally compact Hausdorff \'{e}tale purely infinite groupoids. It is straightforward to see pure infiniteness implying paradoxical comparison. However, we remark in advance that pure infiniteness is actually equivalent to paradoxical comparison. This equivalence will be established in Theorem 5.1 below.

\begin{Exl}(Strong boundary actions  and $n$-filling actions)

For discrete group acting on Compact Hausdorff spaces, Laca and Spielberg in \cite{L-S}, introduced strong boundary actions. Motivated by their work, then in \cite{J-R}, Jolissaint and Robertson introduced the $n$-filling action. An action is a strong boundary action exactly when it is $2$-filling. It was proved in \cite{M2} that all $n$-filling actions, thus including the strong boundary actions, are examples of actions satisfying dynamical comparison but having no invariant probability Borel measures.  Therefore the transformation groupoid of a $n$-filling action is purely infinite by Lemma 3.10. This class includes actions of hyperbolic groups on their Gromov boundaries (Example 2.1 in \cite{L-S}), the canonical action of $SL_n(\Z)$ on the projective space $\mathbb{P}^{n-1}(\R)$ (Example 2.1 in \cite{J-R}) and $H_0(M)$ acting on $M$ introduced in \cite{YS}, where $M$ is a connected compact manifold with no boundaries and $H_0(M)$ is the path connected component of the group of all homeomorphism of $M$ containing the identity. See more in \cite{L-S}, \cite{J-R} and \cite{YS}. 
\end{Exl}

We remark that paradoxical comparison is preserved by inverse limit for dynamical systems. Let $\alpha_n: \Gamma\curvearrowright X_n$ for $n\in \N$ be a sequence of actions of a discrete group $\Gamma$ on compact Hausdorff space $X_n$. Let $\pi_{n}: X_{n+1}\to X_n$ be a factor map.  Then the \textit{inverse limit} system $\alpha: \Gamma\curvearrowright X$ is defined by
\[X=\{(x_n)\in \prod_{n\in \N}X_n: \pi_{n}(x_{n+1})=x_n\ \text{for any}\ n\in \N\}\] together with
$\gamma\cdot(x_n)=(\gamma x_n)$ for any $\gamma\in\Gamma$ and $(x_n)\in X$. Note that $X$ is equipped with the relative product topology inherited from $\prod_{n\in \N}X_n$ and it is not hard to see $X$ is compact as well. Denote by $P_n$ the canonical projection from $\prod_{n\in \N}X_n$ to $X_n$. Let $n<m\in \N$. Denote by $\pi_{n, m}=\pi_{n}\circ\pi_{n+1}\circ\dots\circ\pi_{m-1}$, which is a factor map from $X_m$ to $X_n$. Define $\pi_{n, n}=\id_n$, i.e., the identity map on $X_n$. For any $n\leq m$, observe that $\pi_{n, m}\circ P_m=P_n$ when restrict  $P_n$ and $P_m$ on $X$. Let $O\subset X_n$ be an open set in $X_n$. Denoted by $B(O)$ the open set $P_n^{-1}(O)\cap X$ for simplicity. The following is a preliminary result.

\begin{lem}
	Let $\alpha: \Gamma\curvearrowright X$ be the inverse limit system of $\alpha_n: \Gamma\curvearrowright X_n$ for $n\in \N$ mentioned above. Then the collection $\CC=\{B(O): O\ \text{is an open set in}\ X_n, n\in \N\}$ form a base of the topology on $X$.
\end{lem}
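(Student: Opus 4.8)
The plan is to show that $\CC$ is closed under finite intersections and that every open set in $X$ is a union of members of $\CC$; together these give that $\CC$ is a base for the topology on $X$. The key facts I will use are that $X$ carries the subspace topology from $\prod_{n\in\N}X_n$, that the product topology has as a base the finite-intersection sets $\bigcap_{k\in S}P_k^{-1}(O_k)$ with $S\fin\N$ and $O_k$ open in $X_k$, and the compatibility relation $\pi_{n,m}\circ P_m=P_n$ on $X$ for $n\le m$.

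First I would record the intersection property. Given $B(O)$ with $O$ open in $X_n$ and $B(O')$ with $O'$ open in $X_m$, assume without loss of generality $n\le m$. On $X$ we have $P_n=\pi_{n,m}\circ P_m$, so $B(O)=P_n^{-1}(O)\cap X=P_m^{-1}(\pi_{n,m}^{-1}(O))\cap X=B(\pi_{n,m}^{-1}(O))$, where $\pi_{n,m}^{-1}(O)$ is open in $X_m$ since $\pi_{n,m}$ is continuous. Hence $B(O)\cap B(O')=P_m^{-1}(\pi_{n,m}^{-1}(O)\cap O')\cap X=B(\pi_{n,m}^{-1}(O)\cap O')\in\CC$. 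This simultaneously shows each $B(O)$ lies in $\CC$ indexed at any later level, which is the technical device that makes the rest go smoothly.

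Next I would show every basic open set of $X$ is a union (in fact here it will just equal a single member) of elements of $\CC$. A basic open set of $X$ is $W=X\cap\bigcap_{k\in S}P_k^{-1}(O_k)$ for some finite $S\subset\N$ and open $O_k\subset X_k$; let $m=\max S$. By the previous paragraph each $X\cap P_k^{-1}(O_k)$ equals $B(\pi_{k,m}^{-1}(O_k))$, so $W=\bigcap_{k\in S}B(\pi_{k,m}^{-1}(O_k))=B\bigl(\bigcap_{k\in S}\pi_{k,m}^{-1}(O_k)\bigr)\in\CC$. Since every open subset of $X$ is a union of such basic open sets, it is a union of elements of $\CC$; and since each element of $\CC$ is itself open in $X$ (as $P_n$ is continuous), $\CC$ is a base for the topology on $X$.

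I do not expect a serious obstacle here; the only thing to be careful about is the bookkeeping with the maps $\pi_{n,m}$ and the convention $\pi_{n,n}=\id_n$, and the fact that the relation $\pi_{n,m}\circ P_m=P_n$ holds only after restricting all projections to $X$ (which is exactly why the definition of inverse limit forces the compatibility $\pi_n(x_{n+1})=x_n$). The mild subtlety worth a sentence in the writeup is that a basic product-open set $\bigcap_{k\in S}P_k^{-1}(O_k)$ may involve indices $k<m$ other than $m$ itself, so one must pull all of them up to level $m$ via $\pi_{k,m}$ before intersecting; once that is done the argument is purely formal.
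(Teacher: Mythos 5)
Your proposal is correct and follows essentially the same route as the paper: both arguments hinge on the identity $\pi_{n,m}\circ P_m=P_n$ on $X$ to rewrite $B(O)=B(\pi_{n,m}^{-1}(O))$ at the maximal index and then observe that a finite intersection of sets from $\CC$ is again a single set of the form $B(U)$. The paper phrases this as upgrading the subbase $\CC$ to a base, while you verify directly that every basic subspace-product open set lies in $\CC$; the computations are the same.
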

\begin{proof}
	Note that $\CC$ is a subbase for the topology on $X$. Then it suffices to verify for any finite set $\{n_1, \dots, n_k\}\subset \N$ and open sets $O_i\subset X_{n_i}$  there is an $n\in \N$ and a non-empty open set $U\subset X_n$ such that $B(U)\subset \bigcap_{i=1}^kB(O_i)$ whenever $\bigcap_{i=1}^kB(O_i)\neq \emptyset.$  Suppose $\bigcap_{i=1}^kB(O_i)\neq \emptyset$ holds. Define $n=\max\{n_i: i=1,\dots, k\}$ and $V_i=\pi^{-1}_{n_i, n}(O_i)$, which is an open set in $X_n$.  Then since $\pi_{n_i, n}\circ P_n=P_{n_i}$ on $X$, one has 
	\[B(V_i)=P^{-1}_{n}(V_i)\cap X=P^{-1}_{n}(\pi^{-1}_{n_i, n}(O_i))\cap X=P^{-1}_{n_i}(O_i)\cap X=B(O_i).\]
Then one also has $\bigcap_{i=1}^kV_i\neq \emptyset$. Now define
	$U=\bigcap_{i=1}^kV_i$. Then by the definition one has 
\[B(U)=P^{-1}_n(U)\cap X=\bigcap_{i=1}^kP^{-1}_n(V_i)\cap X=\bigcap_{i=1}^kB(O_i)\] as desired.
\end{proof}

Now we have the following permanence result.

\begin{prop}
Let $\alpha: \Gamma\curvearrowright X$ be the inverse limit system of $\alpha_n: \Gamma\curvearrowright X_n$ for $n\in \N$ mentioned above. If each $\alpha_n$ has paradoxical comparison then so is $\alpha$.
\end{prop}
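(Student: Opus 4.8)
The plan is to translate paradoxical comparison for the transformation groupoid of $\alpha$ into the dynamical language of Definition 3.1 (legitimate by Remark 3.3) and to push a comparison witness that lives at a finite stage $X_n$ up to $X$ along the equivariant projection $P_n$. So fix a non-empty open set $U\subset X$ and a compact set $F\subset U$; I must produce disjoint non-empty open sets $O_1,O_2\subset U$ with $F\prec O_1$ and $F\prec O_2$ in the sense of Definition 3.1 for $\alpha$, which by Definition 3.6 and Remark 3.3 is exactly what $U\prec_{\CG, d}U$ means for the transformation groupoid $\CG$ of $\alpha$.

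First I would use Lemma 3.13 to pass to a single finite stage. By Lemma 3.13 and compactness of $F$, cover $F$ by finitely many basic open sets $B(O_1'),\dots,B(O_k')\subset U$ with each $O_j'$ open in some $X_{n_j}$; setting $n=\max_j n_j$ and replacing each $O_j'$ by $\pi_{n_j,n}^{-1}(O_j')$ --- which does not alter $B(O_j')$, exactly as in the computation in the proof of Lemma 3.13 --- I may assume all the $O_j'$ are open in the single space $X_n$. Then $W:=\bigcup_j O_j'$ is open in $X_n$ and $B(W)=\bigcup_j B(O_j')$, so $F\subset B(W)\subset U$, and $K:=P_n(F)$ is a compact subset of $X_n$ contained in $W$.

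Next I would apply the paradoxical comparison of $\alpha_n$ to the open set $W$ and the compact set $K$: this provides disjoint non-empty open sets $W_1,W_2\subset W$ and, for each $i\in\{1,2\}$, a finite open cover $\{V^i_1,\dots,V^i_{m_i}\}$ of $K$ in $X_n$ together with group elements $s^i_1,\dots,s^i_{m_i}\in\Gamma$ such that the sets $s^i_l V^i_l$ are pairwise disjoint and contained in $W_i$. Put $O_i:=B(W_i)$; these are disjoint open subsets of $B(W)\subset U$, and they are non-empty because $W_i\neq\emptyset$ and the projection $P_n\colon X\to X_n$ is surjective --- indeed the bonding maps $\pi_n$, being factor maps, are surjective, and the projections of an inverse limit of a surjective inverse system of non-empty compact Hausdorff spaces are surjective. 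The key point now is that $P_n$ is $\Gamma$-equivariant and $X$ is $\Gamma$-invariant, so that $\gamma\cdot B(V)=B(\gamma V)$ for every open $V\subset X_n$ and every $\gamma\in\Gamma$. Hence $\{B(V^i_1),\dots,B(V^i_{m_i})\}$ is a finite open cover of $F$ (since $\bigcup_l V^i_l\supset K=P_n(F)$), and the elements $s^i_l$ carry these onto the sets $B(s^i_l V^i_l)$, which are pairwise disjoint and contained in $B(W_i)=O_i$; thus $F\prec O_i$ for $i=1,2$. This yields $U\prec_{\CG, d}U$, so $\alpha$ has paradoxical comparison.

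The argument is mostly bookkeeping. The two places that need care are the reduction to a single stage $n$ with $F\subset B(W)\subset U$ --- taken care of by Lemma 3.13 --- and the identity $\gamma\cdot B(V)=B(\gamma V)$, which is precisely what allows a level-$n$ comparison to be lifted verbatim to $X$; surjectivity of the $P_n$ is a standard fact and is used only to keep $O_1,O_2$ non-empty. I do not expect any substantial obstacle beyond these routine verifications.
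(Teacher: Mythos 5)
Your argument is correct and follows essentially the same route as the paper's proof: reduce to a single stage $X_n$ via the basis lemma (Lemma 3.12 in the paper's numbering), apply paradoxical comparison of $\alpha_n$ to $K=P_n(F)$ inside the union of level-$n$ open sets, and lift the witnessing covers and group elements through the equivariant projection $P_n$, using $\gamma\cdot B(V)=B(\gamma V)$ and $B(V_1)\cap B(V_2)=\emptyset$ for disjoint $V_1,V_2$. Your extra remark on surjectivity of $P_n$ to ensure the sets $O_i$ are non-empty is a small point the paper leaves implicit, and it is handled correctly.
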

\begin{proof}
	Let $U$ be an open set in $X$ and $F\subset U$ a compact set. Then Lemma 3.12 and compactness of $F$ imply that there is an $n\in \N$ and open sets $O_1, \dots, O_k$ in $X_n$ such that $F\subset \bigcup_{i=1}^kB(O_i)\subset U$. Define $K=P_n(F)$, which is a compact set in $X_n$ and $K\subset \bigcup_{i=1}^kO_i$. Write $O=\bigcup_{i=1}^kO_i$ for simplicity. Since $\alpha_n$ has paradoxical comparison, there are disjoint open sets $V_1, V_2\subset O$ such that $K\prec V_j$ for $j=1, 2$. For each $j=1, 2$, there is a collection $\{U^j_1, \dots, U^j_{k_j}\}$ of open sets in $X_n$ and a collection $\{\gamma^j_1, \dots, \gamma^j_{k_j}\}$ of group elements in $\Gamma$ such that $K\subset \bigcup_{i=1}^{k_j}U^j_i$ and $\bigsqcup_{i=1}^{k_j}\gamma^j_iU^j_i\subset V_j$. Since $X$ is an inverse limit, each restriction of $P_n$ on $X$ is a factor map. Then, for any $i\leq k_j$, one has $P_n^{-1}(\gamma^j_iU^j_i)\cap X=\gamma^j_i(P_n^{-1}(U^j_i)\cap X)$. This implies that $F\subset \bigcup_{i=1}^{k_j}(P_n^{-1}(U^j_i)\cap X)$ and $\bigsqcup_{i=1}^{k_j}\gamma^j_i(P_n^{-1}(U_i)\cap X)\subset P_n^{-1}(V_j)\cap X=B(V_j)$. This shows that $F\prec B(V_j)$ for each $j=1, 2$. Now recall $V_1, V_2$ are disjoint. Then so are $B(V_1)$ and $B(V_2)$. In addition, for each $j=1, 2$, one has 
	\[B(V_j)=P_n^{-1}(V_j)\cap X\subset P_n^{-1}(O)\cap X=\bigcup_{i=1}^k(P_n^{-1}(O_i)\cap X)\subset U.\]
	This shows that $\alpha$ has paradoxical comparison.
\end{proof}

\begin{Exl}
In \cite{YS1}, Suzuki constructed many examples of unital Kirchberg algebras by using the inverse limit of  actions of free groups $\F_n$ on its boundary. Note that such an action is a strong boundary action in the sense of \cite{L-S} and thus has paradoxical comparison by Lemma 3.10 (see also Example 3.11 above). Then Proposition 3.13 shows that  Suzuki's examples in \cite{YS1} have paradoxical comparison and thus are purely infinite.
\end{Exl}

\begin{Exl}($n$-filling locally compact Hausdorff \'{e}tale groupoids)
	
	In \cite{YS}, Suzuki generalized the $n$-filling actions mentioned above to locally compact Hausdorff \'{e}tale groupoids on compact spaces.  Such a groupoid $\CG$ is called $n$-filling if for any non-empty open set $W$ in $\GU$ there are $n$ open bisections $E_1, \dots, E_n$ such that 
	$\bigcup_{i=1}^nr(E_iW)=\GU$.  Rainone and Sims in \cite{Ra-Sims} provided another but equivalent generalization in the sense that for any $n$ open sets $W_1, \dots, W_n$ in $\GU$ there are $n$ open bisections $E_1, \dots, E_n$ such that $\bigcup_{i=1}^nr(E_iW_i)=\GU$. We remark that even Rainone and Sims proved this equivalence only in the ample case, their proof works in general. It is not hard to see that if a locally compact Hausdorff \'{e}tale groupoid $\CG$ on a compact space $\GU$ is $n$-filling then $\CG$ has groupoid comparison and $M(\CG)=\emptyset$. Indeed, since $\GU$ is assumed to be compact, it suffices to show $\GU\prec_{\CG} U$ for any open set $U$ in $\GU$. Choose $n$ non-empty disjoint open subsets $W_1, \dots, W_n$ of $U$ and there are open bisections $E_1, \dots, E_n$ such that $\bigcup_{i=1}^nr(E_iW_i)=\GU$. Then for each $i=1, \dots, n$, define open bisections $F_i=(s|_{E_i})^{-1}(W_i)$ satisfying $s(F_i)\subset W_i$ and $r(F_i)=r(E_iW_i)$. Then one has $\GU=\bigcup_{i=1}^ns(F^{-1}_i)$ and $\bigsqcup_{i=1}^nr(F_i^{-1})\subset \bigsqcup_{i=1}^nW_i\subset U$, which establishes the groupoid comparison. Finally, we warn that the notion of  $n$-filling in \cite{J-R} for dynamical systems does not coincide with the $n$-filling in the sense of Suzuki or Rainone-Sims for the transformation groupoid of the dynamical systems.  We will come back to this in Section 5.
\end{Exl}

\begin{Exl}
Anantharaman-Delaroche introduced locally contracting groupoid in \cite{A-D}. A locally compact Hausdorff \'{e}tale groupoid $\CG$ is called \textit{locally contracting} if for any non-empty open set $U$ in $\GU$, there exists an open subset $V$ of $U$ and an open bisection $O$ such that $\overline{V}\subset s(O)$ and $r(O\overline{V})\subsetneq V$. It is not clear in general whether locally contracting implies pure infiniteness for locally compact Hausdorff \'{e}tale groupoids. However, we will show below that many examples of locally contracting groupoid in fact are purely infinite. 
\end{Exl}

First, it was noted in \cite{A-D} that if a minimal dynamical system $\Gamma \curvearrowright X$ satisfies the condition that there is a group element $g\in\Gamma$ having a fixed point $x_0$ as an \textit{attractor} of $g$ in the sense that there is an open neighborhood $W$ of $x_0$ such that $\{g^n(W): n\in \N\}$ form a neighborhood base at $x_0$ then the system is locally contracting.  However, Jolissaint and Robertson shows  this condition in fact implies that the action $\Gamma \curvearrowright X$ is $n$-filling for some $n\in \N^+$ when the underlying space $X$ is compact. In the following proposition, we show that this condition actually implies dynamical comparison in the more general setting that $X$ is locally compact.
\begin{prop}
	Let $\alpha: \Gamma \curvearrowright X$ be a minimal action of a discrete group $\Gamma$ on a locally compact Hausdoff non-discrete space $X$. Suppose there is a group element $g\in\Gamma$ having a fixed point $x_0$ which is an attractor in the sense that there is an open neighborhood $W$ of $x_0$ such that $\{g^n(W): n\in \N\}$ form a neighborhood basis at $x_0$. Then $\alpha$ has dynamical comparison and $M_\Gamma(X)=\emptyset$.
\end{prop}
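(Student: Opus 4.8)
The plan is to establish, in this order, (a) that $K\prec_{\CG}V$ for every compact $K\subseteq X$ and every non-empty open $V\subseteq X$ --- equivalently $U\prec_{\CG}V$ for all non-empty open $U,V$, which is what ``$\alpha$ has dynamical comparison'' amounts to here (cf.\ the Remark after Lemma 3.10; when $X$ is compact the inequality hypothesis in Definition 3.4 is vacuous once (b) holds) --- and (b) $M_\Gamma(X)=\emptyset$. Throughout, $\CG=\CG_{\Gamma\curvearrowright X}$, and for $t\in\Gamma$ and open $O\subseteq X$ the set $\{(tx,t,x):x\in O\}$ is an open bisection with source $O$ and range $tO$; also, minimality of $\alpha$ means every $\Gamma$-orbit is dense in $X$. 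One preliminary remark: unless $W=\{x_0\}$ --- a degenerate case which by minimality forces $X$ to be discrete, and which I set aside --- the point $x_0$ is not isolated, and then minimality makes $X$ perfect; in particular every non-empty open subset of $X$ contains, for each $n\in\N$, $n$ pairwise disjoint non-empty open subsets.

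For (a), fix a non-empty open $V$ and a compact $K\subseteq X$; following the argument of \cite{J-R} for compact spaces, I would argue as follows. By minimality, $\{tW:t\in\Gamma\}$ is an open cover of $X$, so compactness gives $K\subseteq t_1W\cup\dots\cup t_nW$ for some $t_1,\dots,t_n\in\Gamma$. Choose pairwise disjoint non-empty open sets $V_1,\dots,V_n\subseteq V$ (possible by the preliminary remark). For each $i$, density of the orbit of $x_0$ gives $b_i\in\Gamma$ with $b_ix_0\in V_i$; since $\{b_ig^mW:m\in\N\}$ is a neighbourhood basis at $b_ix_0$ and $V_i$ is a neighbourhood of $b_ix_0$, there is $m_i\in\N$ with $b_ig^{m_i}W\subseteq V_i$. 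Now put $U_i:=t_iW$, $\gamma_i:=b_ig^{m_i}t_i^{-1}\in\Gamma$, and let $A_i$ be the open bisection $\{(\gamma_ix,\gamma_i,x):x\in U_i\}$. Then $s(A_i)=U_i$, so $K\subseteq\bigcup_i s(A_i)$; and $r(A_i)=\gamma_iU_i=b_ig^{m_i}W\subseteq V_i$, so $r(A_1),\dots,r(A_n)$ are pairwise disjoint and contained in $V$. Hence $K\prec_{\CG}V$, and since $K$ was an arbitrary compact subset of any non-empty open $U$, this gives $U\prec_{\CG}V$.

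For (b), note first that every $\mu\in M_\Gamma(X)$ satisfies $\mu(U')\le\mu(V')$ whenever $U'\prec_{\CG}V'$: for compact $K\subseteq U'$ one has $\mu(K)\le\sum_i\mu(s(A_i))=\sum_i\mu(r(A_i))=\mu\big(\bigsqcup_i r(A_i)\big)\le\mu(V')$ by subadditivity, $\CG$-invariance ($\mu(s(A_i))=\mu(r(A_i))$), disjointness of the ranges, and $\bigsqcup_ir(A_i)\subseteq V'$; taking the supremum over $K$ by inner regularity gives the claim. By (a) this holds for all pairs of non-empty open sets, forcing $\mu$ to be constant on non-empty open sets; taking a disjoint pair $V_1,V_2$ of non-empty open sets then yields $\mu(V_1)=\mu(V_1\cup V_2)=\mu(V_1)+\mu(V_2)$, so $\mu$ vanishes on every non-empty open set and hence $\mu(X)=0$ by inner regularity, contradicting $\mu(X)=1$. (Alternatively one argues (b) directly: $\mu(g^mW)=\mu(W)$ for all $m$ and the neighbourhood-basis property force $\mu(\{x_0\})=\mu(W)$; this common value is either positive, producing infinitely many atoms of equal positive mass along the infinite orbit of $x_0$, or zero, in which case $\mu(tW)=0$ for all $t\in\Gamma$ while finitely many translates of $W$ cover any compact set --- either way a contradiction.)

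The step that needs care is the construction in (a). One should resist trying to move all of $K$ (or a whole neighbourhood of it) into $V$ by a single group element, or to merge the covering translates $t_1W,\dots,t_nW$ into a single translate of $W$: neither is possible from minimality plus a merely local attractor. The correct move is to spend the $n$ available bisections carrying the $n$ covering pieces $t_iW$ into $n$ pairwise disjoint targets $V_i\subseteq V$, with the attractor used only through the easy fact that \emph{some} translate of $W$ sits inside each prescribed non-empty open set --- this is precisely what makes the disjointness required by Definition 3.2 automatic. The remaining point to verify is the preliminary observation that $X$ is perfect, so that the disjoint targets $V_1,\dots,V_n$ exist.
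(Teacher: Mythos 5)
Your argument is correct and is essentially the paper's proof: cover the compact set by translates $t_iW$ of the attracting neighborhood using minimality, choose $n$ pairwise disjoint non-empty open targets $V_1,\dots,V_n\subset V$, and use minimality together with the attractor property to produce a group element carrying each $t_iW$ into its own $V_i$ (your $\gamma_i t_i=b_ig^{m_i}$ is the paper's $t_ih_i=s_i^{-1}h_ig^{n_i}$ up to relabelling), which gives $K\prec_{\CG}V$ with the required disjointness. Your explicit measure argument for $M_\Gamma(X)=\emptyset$ and the perfectness observation justifying the existence of the disjoint $V_i$ merely spell out steps the paper leaves implicit, so the two proofs coincide in substance.
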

\begin{proof}
	First we claim $X$ is perfect. Suppose not, let $x$ be an isolated point in $X$. Since $\Gamma$ is discrete and $\alpha$ is minimal, the space $X=\Gamma\cdot x$ is discrete. This is a contradiction to our assumption on $X$.
	Let $U, V$ be non-empty open sets in $\GU$ and $F\subset U$ be a compact set.  First, since $\alpha$ is minimal, there are finitely many group elements $h_1,\dots, h_n\in \Gamma$ such that $F\subset \bigcup_{i=1}^nh_iW$. Now since $X$ is perfect and Hausdorff, choose $n$ disjoint non-empty open subsets $V_1, \dots, V_n$ of $V$. Apply minimality of $\alpha$ again, one has that for each $i\leq n$ there is an $s_i\in \Gamma$ such that $h_ix_0\in s_iV_i$. This implies that there is a $n_i\in \N$ such that $x_0\in g^{n_i}W\subset h_i^{-1}s_iV_i$. Now set $t_i=s_i^{-1}h_ig^{n_i}h_i^{-1}$ for each $i\leq n$. Observe that $t_ih_iW\subset V_i$ and thus one has $\bigsqcup_{i=1}^nt_ih_iW\subset V$ since all $V_i$ are disjoint. This thus establishes $F\prec V$. Since $F$ is arbitrary, one has $U\prec V$. This shows that $\alpha$ has dynamical comparison and $M_\Gamma(X)=\emptyset$.
\end{proof}

Therefore, all examples of dynamical systems in \cite{A-D}  satisfy dynamical comparison and have no invariant probability measures. See Proposition 3.2, 3.3, 3.4, 3.5 in \cite{A-D}. 

Independently, Laca and Spielberg in \cite{L-S} introduced a dynamical version of the locally contracting called \textit{local boundary action}. There are several examples of local boundary action presented in \cite{L-S}. For instance, some Ruelle algebras arising from certain Smale spaces, for example, solenoids,  could be realized as reduced crossed products of dynamical systems satisfying the hypothesis of the following corollary of Proposition 3.17 (see Lemma 10 and Example 14.1, 14.2 in \cite{L-S}).

\begin{cor}
Let $\Gamma$ be a non-discrete locally compact Hausdorff group and $\Lambda$ be a dense subgroup of $\Gamma$. Suppose $\alpha \in \operatorname{Aut}(\Gamma)$ satisfies that 
\begin{enumerate}[label=(\roman*)]
	\item $\alpha(\Lambda)=\Lambda$;
	
	\item there is an open neighbourhood $U$ of the identity $e\in \Gamma$ such that $\{\alpha^k(U): k\in \Z\}$ is a neighbourhood base at $e$ in $\Gamma$.
\end{enumerate}
Then the dynamical system $\Lambda\rtimes_\alpha \Z\curvearrowright \Gamma$ is minimal and has dynamical comparison with $M_{\Lambda\rtimes \Z}(\Gamma)=\emptyset$, where the semi-product group $\Lambda\rtimes_\alpha \Z$ is given with discrete topology.
\end{cor}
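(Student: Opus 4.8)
The plan is to reduce everything to Proposition 3.17. I would first set up the action of the discrete group $\Lambda\rtimes_\alpha\Z$ on $\Gamma$ by $(\lambda,k)\cdot x=\lambda\alpha^k(x)$ — hypothesis (i) is exactly what makes both the semidirect product and this action well defined — and dispose of minimality immediately: for any $x\in\Gamma$ the orbit contains $\Lambda x$, and since $\Lambda$ is dense and $y\mapsto yx$ is a homeomorphism, $\overline{\Lambda x}=\overline{\Lambda}\,x=\Gamma$, so every orbit is dense. It then remains to verify the hypothesis of Proposition 3.17, namely that some group element has a fixed point which is an attractor. The obvious candidate is the point $x_0=e$, fixed by every $(e_\Lambda,k)$, with the neighbourhood $W=U$ coming from (ii); the only issue is that Proposition 3.17 wants the forward iterates $\{g^n(W):n\in\N\}$ to form a neighbourhood base, whereas (ii) only asserts this for the two-sided family $\{\alpha^k(U):k\in\Z\}$.

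The key step, and the one I expect to require a little care, is an elementary analysis of the monoid $A:=\{k\in\Z:\alpha^k(U)\subseteq U\}$, which is a subsemigroup of $(\Z,+)$ containing $0$ since $\alpha^{j+k}(U)=\alpha^j(\alpha^k(U))\subseteq\alpha^j(U)\subseteq U$ whenever $j,k\in A$. Non-discreteness of $\Gamma$ — equivalently, $\{e\}$ not open — rules out the two degenerate cases: $A=\{0\}$ would force $U$ to be the smallest neighbourhood of $e$, hence $U=\{e\}$ open; and $A=m\Z$ with $m\geq1$ would give $\alpha^m(U)=U$, so $\{\alpha^k(U):k\in\Z\}$ would be a finite neighbourhood base at $e$, again making $\{e\}$ open. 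A submonoid of $\Z$ that is neither $\{0\}$ nor a subgroup is contained in $\Z_{\geq0}$ or in $\Z_{\leq0}$; since replacing $\alpha$ by $\alpha^{-1}$ leaves the family $\{\alpha^k(U):k\in\Z\}$ unchanged (it only replaces $A$ by $-A$), I may assume $A\subseteq\Z_{\geq0}$, and then fix some $p\in A$ with $p\geq1$.

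Granting this normalisation, I would show that $\{\alpha^n(U):n\geq1\}$ is a neighbourhood base at $e$, which is precisely the attractor condition for $\alpha$ with witness $W=U$. Given a neighbourhood $V$ of $e$, the set $V\cap U$ is again a neighbourhood of $e$, so (ii) supplies $k\in\Z$ with $\alpha^k(U)\subseteq V\cap U$; then $\alpha^k(U)\subseteq U$ forces $k\in A\subseteq\Z_{\geq0}$, and setting $m:=k+p\geq1$ we get
\[\alpha^m(U)=\alpha^k(\alpha^p(U))\subseteq\alpha^k(U)\subseteq V\]
because $\alpha^p(U)\subseteq U$. Hence Proposition 3.17 applies with $g=(e_\Lambda,1)$ (or $(e_\Lambda,-1)$ if the normalisation required passing to $\alpha^{-1}$), yielding that $\Lambda\rtimes_\alpha\Z\curvearrowright\Gamma$ has dynamical comparison and $M_{\Lambda\rtimes\Z}(\Gamma)=\emptyset$; combined with the minimality established at the start, this is the claim.

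The main obstacle is the normalisation paragraph: one must notice that non-discreteness excludes a periodic or trivial $A$, that such an $A$ then lies on one side of $0$, and — the punchline that makes the final estimate trivial — that membership of an index in $A$ automatically fixes its sign, so a single shift by a fixed $p\in A$ converts any admissible index into a positive one without enlarging the corresponding set. The remaining ingredients (well-definedness of the semidirect action, and minimality from density of $\Lambda$) are routine.
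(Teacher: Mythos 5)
Your proposal is correct and follows essentially the route the paper intends: the corollary is stated as a consequence of Proposition 3.17 (with the verification of the hypotheses deferred to Lemma 10 of \cite{L-S}), and your argument does exactly this — minimality from density of $\Lambda$, and the attractor condition at $e$ for $(e_\Lambda,\pm1)$ obtained by upgrading the two-sided family $\{\alpha^k(U):k\in\Z\}$ to a one-sided base. Your monoid analysis of $A=\{k\in\Z:\alpha^k(U)\subseteq U\}$, using non-discreteness and Hausdorffness to exclude $A=\{0\}$ and $A=m\Z$, is a correct self-contained substitute for the cited Laca--Spielberg lemma.
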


We now recall the following concept in \cite{L-S}. 

\begin{defn}
Let $g$ be a homeomorphism of a locally compact Hausdorff space $X$. A fixed point $x$ of $g$ is called \textit{stable} if for any neighborhood $U_1$ of $x$ there is another neighborhood $U_2$ of $x$ such that $U_2\subset U_1$ and $g^nU_2\subset U_1$ for any $n\in \N$. The fixed point $x$ of $g$ is called \textit{asymtotically stable} if it is stable and there is a neighborhood $U$ such that $\lim_{n\to \infty}g^ny=x
$ for any $y\in U$. 
\end{defn}

It was proved in \cite{L-S} that if the set of asymtotically stable fixed points is dense, which happens if the dynamical system is minimal, then the action is a local boundary action. We show below that an asymptotically stable point $x$ of a group element $g$ is actually an attractor of $g$.

\begin{prop}
Let $\alpha: \Gamma \curvearrowright X$ be an action of a discrete group $\Gamma$ on a locally compact Hausdoff space $X$. Then any asymptotically stable fixed point $x$ for an element $g\in \Gamma$ is an attractor of $g$.
\end{prop}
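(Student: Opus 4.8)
The plan is to build the attractor neighborhood directly out of the data in the hypothesis. Recall that asymptotic stability of $x$ means two things at once: $x$ is \emph{stable}, and there is a neighborhood of $x$ on which the forward orbit of every point converges to $x$; passing to interiors I may take this neighborhood to be an open set $U$ with $g^ny\to x$ for all $y\in U$. Since $X$ is locally compact Hausdorff, I would first fix an open neighborhood $W$ of $x$ whose closure $\overline{W}$ is compact and contained in $U$; this $W$ will be the sought open neighborhood. Because $g$ is a homeomorphism fixing $x$, each set $g^nW$ is automatically an open neighborhood of $x$, so the only content of the claim ``$\{g^nW:n\in\N\}$ is a neighborhood base at $x$'' is that every neighborhood of $x$ contains some $g^nW$.

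So the main step is: given a neighborhood $V$ of $x$ (which I may assume open), produce $N$ with $g^NW\subset V$. Here I would invoke \emph{stability} to get an open neighborhood $V_0\subset V$ of $x$ with $g^nV_0\subset V$ for all $n\in\N$. For each $y\in\overline{W}$ the convergence $g^ny\to x$ together with openness of $V_0$ gives some $N_y$ with $g^{N_y}y\in V_0$, and continuity of $g^{N_y}$ then yields an open set $O_y\ni y$ with $g^{N_y}O_y\subset V_0$. Compactness of $\overline{W}$ lets me extract a finite subcover $O_{y_1},\dots,O_{y_k}$ of $\overline{W}$; putting $N=\max_i N_{y_i}$, for each $i$ one has
\[
g^NO_{y_i}=g^{N-N_{y_i}}\bigl(g^{N_{y_i}}O_{y_i}\bigr)\subset g^{N-N_{y_i}}V_0\subset V,
\]
so $g^NW\subset g^N\overline{W}\subset\bigcup_i g^NO_{y_i}\subset V$, as desired. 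This exhibits $W$ as an attractor neighborhood of $g$.

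I do not anticipate a genuine obstacle, but there is one point that must not be skipped: the \emph{stability} half of asymptotic stability is essential and cannot be replaced by the convergence half alone. Without it, composing the finitely many ``delays'' $N_{y_i}$ into a single $N$ would drag the sets $g^{N_{y_i}}O_{y_i}$ back out of $V$; absorbing $V$ into the forward-$g$-invariant sub-neighborhood $V_0$ is precisely what prevents this. The second ingredient doing real work is the compactness of $\overline{W}$ (available since $X$ is locally compact Hausdorff), which is what upgrades the pointwise delays $N_y$ to a uniform $N$.
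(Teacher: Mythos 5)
Your proof is correct and follows essentially the same route as the paper's: use stability to replace the target neighborhood $V$ by a forward-invariant sub-neighborhood $V_0$, use convergence plus compactness (of $\overline{W}$ in your case, of a compact neighborhood $U$ in the paper's) to extract a uniform time $N$, and conclude via the same factorization $g^N = g^{N-N_i}\circ g^{N_i}$. The only difference is cosmetic: the paper works directly with a compact neighborhood instead of an open set with compact closure.
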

\begin{proof}
	First, without loss of generality, one can choose a compact neighborhood $U$ of $x$ such that 
	\[\lim\limits_{n\to \infty} g^ny=x\] 
	holds for any $y\in U$. Now	let $O$ be an open set containing $x$. Choose an open neighborhood $V$ of $x$ such that $g^nV\subset O$ for all $n\in \N$.   Then for each $y\in U$, since $\lim_{n\to \infty} g^ny=x$ there is an $n_y\in \N$ such that $g^{n_y}y\in V$. This implies that there is an open neighborhood $W_y$ of $y$ such that $g^{n_y}W_y\subset V$. Note that $\{W_y: y\in U\}$ form a cover of $U$ and thus there is a finite subcover $\{W_1, \dots, W_m\}$ since $U$ is compact. Now define $n=\max\{n_1,\dots, n_m\}$ in which we write $n_i$ for $n_{y_i}$ to simplify the notation. Then one has 
	\[g^nU\subset g^n(\bigcup_{i=1}^mW_i)=\bigcup_{i=1}^mg^nW_i=\bigcup_{i=1}^mg^{n-n_i}g^{n_i}W_i\subset \bigcup_{i=1}^mg^{n-n_i}V\subset O.\]
	This shows that $x$ is an attractor of $g$.
\end{proof}

\begin{cor}
	Let $\alpha: \Gamma \curvearrowright X$ be a minimal action of a discrete group $\Gamma$ on a locally compact Hausdoff space $X$. Suppose there is an asymptotic stable point $x$ for a $\gamma\in \Gamma$. Then $\alpha$ has dynamical comparison and $M_\Gamma(X)=\emptyset$.
\end{cor}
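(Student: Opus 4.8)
The plan is to combine Corollary 3.19 (Proposition 3.21 identifies an asymptotically stable fixed point as an attractor) with Proposition 3.17. Since $\alpha$ is minimal and there is an asymptotically stable fixed point $x$ of some $g\in\Gamma$, Proposition 3.21 tells us that $x$ is in fact an attractor of $g$: there is an open neighborhood $W$ of $x$ such that $\{g^nW: n\in\N\}$ forms a neighborhood basis at $x$. With this in hand, the hypotheses of Proposition 3.17 are met verbatim, so $\alpha$ has dynamical comparison and $M_\Gamma(X)=\emptyset$.

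Concretely, first I would invoke Proposition 3.21 to upgrade the asymptotically stable fixed point $x$ to an attractor of $g$, producing the neighborhood basis $\{g^nW : n \in \N\}$ at $x$. Second, I would feed $x$, $g$, and $W$ into Proposition 3.17, whose conclusion is exactly that $\alpha$ has dynamical comparison and $M_\Gamma(X)=\emptyset$. That is the entire argument; there is nothing left to do beyond citing the two prior results in sequence.

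There is essentially no obstacle here — the corollary is a formal consequence of the two preceding results, and the only point to be slightly careful about is that Proposition 3.21 already assumes (or derives, via choosing a compact neighborhood) exactly the local compactness and asymptotic-stability hypotheses we are handed, so no additional regularity is needed. Since the work has been done upstream, the ``proof'' is just the observation that Proposition 3.21 followed by Proposition 3.17 closes the chain from asymptotic stability to dynamical comparison.
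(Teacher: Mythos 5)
Your proposal is correct and is exactly the paper's argument: the paper proves this corollary by citing Proposition 3.17 together with the result that an asymptotically stable fixed point is an attractor (Proposition 3.20 in the paper's numbering---your labels ``Proposition 3.21'' and ``Corollary 3.19'' are shifted, but the content you invoke is the right one). No further work is needed.
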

\begin{proof}
This is a straightforward consequence of Proposition 3.17 and 3.20.
\end{proof}

Note that the action of $PSL(n, \Z)$ on a flag manifold $\CF$ in $\R^n$ considered in \cite{L-S} is minimal and has an asymptotically stable fixed point by some group elements. To summarize, we have the following result. 

\begin{Exl}
	Corollary 3.18 and 3.21 imply that all examples of local boundary actions in \cite{L-S} satisfy dynamical comparison and have no invariant probability measures. This includes the induced dynamical systems of $m$-adic solenoids in Example 14.1 and the hyperbolic automorphisms of tori in Example 14.2 as well as the action of $PSL(n, \Z)$ on a flag manifold $\CF$ in $\R^n$ in Example 16.1 of \cite{L-S} 
\end{Exl}

\begin{Exl}
	It was noted in Example 4.4.7 of \cite{Rordam} that  Kumjian and Archbold independently observed that the Cuntz algebra $\CO_2$ has a dynamical model in the sense that there is an action $\alpha_0$ of $\Z_2\ast \Z_3$ on the Cantor set $X$ such that $\CO_2\simeq C(X)\rtimes_r (\Z_2\ast \Z_3)$ where $\alpha_0$ is defined as follows. Identify $X$ by $\{0, 1\}^\N$ and let $\varphi$ and $\psi$ be two homeomorphism on $X$ given by
	\[\xymatrix@C=0.4cm{
		 (0, x_2,x_3,\dots)\ar[rr]^{\varphi} && (1, x_2,x_3,\dots) \ar[rr]^{\varphi} && (0, x_2,x_3,\dots) }\]
	 and
	 \[\xymatrix@C=0.4cm{
	 	(0, x_2,x_3,\dots)\ar[rr]^{\psi} && (1, 1, x_2,x_3,\dots) \ar[rr]^{\psi} && (1, 0, x_2,x_3,\dots) \ar[rr]^{\psi} && (0, x_2,x_3,\dots)}.\]
 	Then $\varphi^2=\psi^3=\id_X$ and thus $\varphi$ and $\psi$ induces an action $\alpha_0$ on $X$. We show below that $\alpha_0$ has dynamical comparison and $M_{\Z_2\ast \Z_3}(X)=\emptyset$.
\end{Exl}

\begin{prop}
	The action $\alpha_0: \Z_2\ast \Z_3 \curvearrowright X$ above has dynamical comparison and $M_{\Z_2\ast \Z_3}(X)=\emptyset$.
\end{prop}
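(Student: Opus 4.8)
The plan is to prove the single statement that $U \prec_{\CG_0} V$ for \emph{every} pair of non-empty open subsets $U,V$ of $X$, where $\CG_0 := \CG_{\Z_2\ast \Z_3\curvearrowright X}$; by the remark following Lemma 3.10 this is \emph{equivalent} to the conjunction of ``$\alpha_0$ has dynamical comparison'' and ``$M_{\Z_2\ast\Z_3}(X)=\emptyset$'', so both assertions come out at once. (If one prefers, once $X\prec_{\CG_0} Z$ is available for a sufficiently small cylinder $Z$, the emptiness of $M_{\Z_2\ast\Z_3}(X)$ also follows by a one-line invariance and finite-additivity estimate.) For a finite binary word $w$ write $Z(w)=\{(x_n)\in X:(x_1,\dots,x_{|w|})=w\}$; these cylinders form a basis of clopen sets for $X$, and two cylinders of the same length are equal or disjoint.

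The crux is a short computation. Put $\iota_0:=\varphi\psi^{-1}$ and $\iota_1:=\varphi\psi$, both elements of $\Z_2\ast\Z_3$ (here $\varphi$ generates the $\Z_2$-factor and $\psi$ the $\Z_3$-factor). Using the explicit descriptions of $\varphi$ and $\psi$ one checks that for $c\in\{0,1\}$ and every $(0,x_2,x_3,\dots)\in Z(0)$,
\[
\iota_c\cdot(0,x_2,x_3,\dots)=(0,c,x_2,x_3,\dots),
\]
so that $\iota_c$ maps $Z(0)$ homeomorphically onto $Z(0c)\subseteq Z(0)$. Since the image again lies in $Z(0)$, these maps compose freely: for a word $d_2d_3\cdots d_n$ the element $\iota_{d_2}\iota_{d_3}\cdots\iota_{d_n}$ sends $(0,x_2,x_3,\dots)$ to $(0,d_2,d_3,\dots,d_n,x_2,x_3,\dots)$ and hence carries $Z(0)$ onto $Z(0d_2d_3\cdots d_n)$ (the empty product being $\id_X$). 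Post-composing with $\varphi$, which flips the first coordinate, produces $Z(1d_2d_3\cdots d_n)$. Thus for every word $v$ there is $g\in\Z_2\ast\Z_3$ with $g\cdot Z(0)=Z(v)$; as each such $g$ is a bijection of $X$, the relation ``$gZ(u)=Z(v)$ for some $g$'' is an equivalence relation on cylinders, and we conclude that for any two cylinders $Z(u),Z(v)$ there is $g\in\Z_2\ast\Z_3$ with $g\cdot Z(u)=Z(v)$.

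Granting this, dynamical comparison is formal. Let $U,V$ be non-empty open and let $F\subseteq U$ be closed, hence compact. Since cylinders form a basis, $F$ is compact, and cylinders of a fixed length partition $X$, we may cover $F$ by finitely many pairwise disjoint cylinders $Z(w_1),\dots,Z(w_m)\subseteq U$. Choose a cylinder $Z(v_0)\subseteq V$ together with pairwise disjoint subcylinders $Z_1,\dots,Z_m\subseteq Z(v_0)$, and pick $g_j\in\Z_2\ast\Z_3$ with $g_j\cdot Z(w_j)=Z_j$. Then $\{Z(w_1),\dots,Z(w_m)\}$ is a finite open cover of $F$ whose images under the $g_j$ are the pairwise disjoint open subsets $Z_1,\dots,Z_m$ of $V$, so $F\prec_{\CG_0} V$. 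As $F$ was arbitrary, $U\prec_{\CG_0} V$, and the remark after Lemma 3.10 finishes the proof.

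The one step that requires care is the identification of the pair $\iota_0=\varphi\psi^{-1}$, $\iota_1=\varphi\psi$: what makes the scheme work is precisely that each $\iota_c$ ``inserts the digit $c$ into the second coordinate'' while keeping its image inside $Z(0)$, which is what allows an arbitrary cylinder to be built out of $Z(0)$ by iteration. I expect this verification — together with keeping the left-action composition conventions straight — to be the only genuine content; the covering/disjointifying step and the passage to ``no invariant probability measures'' are routine.
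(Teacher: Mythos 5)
Your proposal is correct and follows essentially the same route as the paper: the heart of both arguments is that every cylinder is a group translate of $N_0=Z(0)$ via words in $\varphi$ and $\psi^{\pm 1}$ (your insertion maps $\iota_0=\varphi\psi^{-1}$, $\iota_1=\varphi\psi$ are just the inverse, ``build-up'' form of the paper's digit-deleting induction), after which comparison and $M_{\Z_2\ast\Z_3}(X)=\emptyset$ follow from the remark after Lemma 3.10. The only cosmetic difference is that you verify $F\prec V$ for an arbitrary compact $F$ by a disjoint cylinder cover, whereas the paper reduces to $X\prec O$ using $X=N_0\sqcup N_1$; both steps are routine and your computations check out.
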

\begin{proof}
Since  $X=\{0, 1\}^\N$ is compact, it suffices to show $X\prec O$ for any open set $O$ in $X$. Note that the collection of all $N_{z_1z_2,\dots,z_n}=\{x\in X: x_i=z_i\ \text{for any}\ i\leq n\}$ where $z_1, z_2,\dots, z_n\in \{0, 1\}$ and $n\in \N$ form a standard base of the topology on $X$. Now $X=N_0\sqcup N_1$. In addition, choose two disjoint open sets $N_{z_1z_2,\dots,z_n}$ and $N_{y_1y_2,\dots, y_m } \subset O$. Without loss of generality, one can assume $n ,m\geq 2$. Now, it suffices to show that there are $g_1, g_2\in \Z_2\ast \Z_3$  such that $g_1N_0=N_{z_1z_2,\dots,z_n}$ and $g_2N_1=N_{y_1y_2,\dots, y_m }$.  For $N_{z_1z_2,\dots,z_n}$, where $n\geq 2$,  one has 
\begin{enumerate}[label=(\roman*)]
	\item if $z_1=z_2=1$ then $\psi^{-1}(N_{z_1z_2,\dots,z_n})=N_{0z_3,\dots, z_n}$.
	\item if $z_1=1$ and $z_2=0$ then $\psi(N_{z_1z_2,\dots,z_n})=N_{0z_3,\dots, z_n}$.
	\item if $z_1=0$ then $\varphi(N_{z_1z_2,\dots,z_n})=N_{1z_2,\dots,z_n}$
\end{enumerate}
This implies that there is a $g\in \Z_2\ast \Z_3$ such that $gN_{z_1z_2,\dots,z_n}=N_{z_2,\dots,z_n}$. Indeed, if $z_1=z_2=1$ define $g=\varphi\circ \psi^{-1}$. If $z_1=1$ and $z_2=0$ then define $g=\psi$. If $z_1=0$, by (iii) above, one can always reduce the problem to the case $z_1=1$ above.
Therefore, by induction there is an $h\in \Z_2\ast \Z_3$ such that $hN_{z_1z_2,\dots,z_n}=N_{z_n}$. If $z_n=0$ we are done and if $z_n=1$ then $\varphi(hN_{z_1z_2,\dots,z_n})=N_0$. This thus shows that there is a $g_1\in  \Z_2\ast \Z_3$ such that $g_1N_0=N_{z_1z_2,\dots,z_n}$. The same method shows that there is an $h_2$ such that $h_2N_0=N_{y_1y_2,\dots, y_m}$. Then define $g_2=h_2\circ \varphi$. Then $g_2N_1=N_{y_1y_2,\dots, y_m}$ as desired.
\end{proof}

\begin{Exl}
	It was noted in \cite{B-L} that all compact open sets in the following locally compact Hausdorff \'{e}tale ample groupoids are $(2,1)$-paradoxical.
	\begin{enumerate}[label=(\roman*)]
		\item Cuntz groupoids defined in \cite{Renault} ;
		\item  coarse groupoids generated by paradoxical coarse metric spaces with bounded geometry.
	\end{enumerate}
	 Remark 3.10  then implies the ample groupoids above have paradoxical comparison and thus are purely infinite.  
\end{Exl}

The following examples of purely infinite groupoids arise from directed graphs.

\begin{Exl}
As we mentioned above, our pure infiniteness is a generalization of Matui's pure infiniteness in \cite{Matui2}. See Corollary 5.5 below. It was also proved in \cite{Matui2} that the groupoids arising from shifts of finite type are purely infinite.
\end{Exl}

\begin{Exl}
 In \cite{Sp}, Spielberg constructed a groupoid model for each Kirchberg algebra in the UCT class. Each of these groupoid arises from a mixture of a $1$-graph and a $2$-graph. It can be verified that any of these groupoids is purely infinite by a virtually identical approach to Matui's argument for shifts of finite type. Therefore, any Kirchberg algebra in the UCT class has a purely infinite groupoid model. 
\end{Exl}

The following example on the negative side was communicated to the author by Hanfeng Li. This shows that paradoxical comparison for dynamical systems is not preserved by extensions.

\begin{Exl}(Hanfeng Li)
	Let $\alpha: \Gamma \curvearrowright X$ be an action of a countable discrete group $\Gamma$ acting on a compact metrizable space $X$ such that $\alpha$ has paradoxical comparison. Let $\Gamma^*=\Gamma\cup\{\infty\}$ be the one-point compactification  of $\Gamma$ and let $\beta: \Gamma \curvearrowright \Gamma^*$ be the action defined by $\beta_g(h)=gh$ if $g, h\in \Gamma$ and $\infty\in \Gamma^*$ is a fixed point by all $g\in \Gamma$.  Then $\alpha\times \beta: \Gamma \curvearrowright X\times \Gamma^*$ is an extension of $\alpha: \Gamma \curvearrowright X$. Now we show $\alpha\times \beta$ has no paradoxical comparison. Suppose the contrary, for any clopen set $X\times \{h\}$ for $h\in \Gamma$, there are disjoint non-empty open set $U_1, U_2\subset X\times \{h\}$ such that $X\times \{h\}\prec U_i$ for $i=1, 2$. Note that each $U_i$ is of the form $V_i\times \{h\}$ for some non-empty open set $V_i\subset X$. Then observe that the only the identity $e_\Gamma$ could implement $X\times\{h\}\prec V_1\times \{h\}$. But this implies that $V_1=X$ and thus $V_2=\emptyset$. But this is a contradiction to the assumption that $V_2$ is not empty.
\end{Exl}

\section{The groupoid semigroup}
In this section, we introduce a new semigroup for locally compact Hausdorff \'{e}tale groupoids as a groupoid version of the generalized type semigroup introduced in \cite{M3}. The main contribution here is to enlarge the definition of the semigroup to the case that the underlying space is locally compact Hausdorff. On the other hand, unlike the zero-dimensional case, the generalized type semigroup does not actually clearly reflect the ``type'' of decomposition of open sets in the underlying space. Therefore, we abandon the name ``the generalized type semigroup'' for groupoids here and simply call it the \textit{groupoid semigroup}. Before introducing the definition of groupoid semigroups, we recall some necessary backgrounds on preordered commutative semigroups. Recall a semigroup equipped with a neutral element is called a \textit{monoid}.

Let $(W, +, \leq)$ be a preordered commutative semigroup. We say  an element $x\in W$ is \textit{properly infinite} if $2x\leq x$. We say $W$  is \textit{purely infinite} if every $x\in W$ is properly infinite. In addition, we say $W$ is \textit{almost unperforated} if, whenever $x,y\in W$ and $n\in \mathbb{N}$ are such that $(n+1)x\leq ny$, one has $x\leq y$. A state on a preordered monoid $(W, +, \leq)$ is an order preserving morphism $f: W\rightarrow [0,\infty]$ with $f(0)=0$.  We say a state is \textit{non-trivial} if it takes a value different from $0$ and $\infty$. We denote by $S(W)$ the set consisting of all states of $W$ and by $S_N(W)$ the set of all non-trivial states. We write $S(W, x)=\{f\in S(W): f(x)=1\}$, which is a subset of $S_N(W)$. The following proposition due to Ortega, Perera, and R{\o}rdam is very useful.
See Proposition 2.1 in \cite{O-P-R}.

\begin{prop}
	Let $(W,+,\leq)$ be an ordered commutative semigroup, and let $x,y\in W$. Then the following conditions are equivalent:
	\begin{enumerate}[label=(\roman*)]
		\item There exists $k\in \mathbb{N}$ such that $(k+1)x\leq ky$.
		
		\item There exists $k_0\in \mathbb{N}$ such that $(k+1)x\leq ky$ for every $k\geq k_0$.
		
		\item There exists $m\in \mathbb{N}$ such that $x\leq my$ and $D(x)<D(y)$ for every state $D\in S(W,y)$.
	\end{enumerate}
\end{prop}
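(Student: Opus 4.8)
The plan is to prove $(ii)\Rightarrow(i)$, $(i)\Rightarrow(ii)$, $(i)\Rightarrow(iii)$ and $(iii)\Rightarrow(i)$, of which only the last implication carries real content. The implication $(ii)\Rightarrow(i)$ is immediate (take $k=k_0$). For $(i)\Rightarrow(ii)$ I would argue combinatorially: assuming $(k+1)x\leq ky$, put $k_0:=k^2$; for $n\geq k_0$ write $n=qk+r$ with $0\leq r<k$, note that $qk=n-r>k(k-1)$ (as $n\geq k^2$ and $r<k$) forces $q\geq k$, hence $r+1\leq k\leq q$, and then
\[
(n+1)x=(qk+r+1)x\leq (qk+q)x=q\bigl((k+1)x\bigr)\leq q(ky)=(qk)y\leq ny ,
\]
using only monotonicity of multiplication by $q$, together with $r+1\leq q$ and $qk\leq n$. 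For $(i)\Rightarrow(iii)$: from $(k+1)x\leq ky$ one gets $x\leq(k+1)x\leq ky$, so $m:=k$ works, and applying any $D\in S(W,y)$ to the inequality gives $(k+1)D(x)\leq kD(y)=k$, whence $D(x)\leq\tfrac{k}{k+1}<1=D(y)$.

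The substance lies in $(iii)\Rightarrow(i)$, which I would prove by contraposition: assuming $(k+1)x\not\leq ky$ for every $k$, I will produce a state $D$ on $W$ with $D(y)=1\leq D(x)$, contradicting the second clause of $(iii)$ --- the first clause $x\leq my$ being what keeps all quantities below finite. First, the failure of $(i)$ forces $qx\not\leq py$ whenever $q>p\geq 1$ (otherwise $(p+1)x\leq qx\leq py$, i.e.\ $(i)$ with $k=p$), and it similarly rules out $qx\leq 0$; hence
\[
\inf\{\,p/q:\ p,q\in\mathbb{N},\ qx\leq py\ \text{in}\ W\,\}\ \geq\ 1 ,
\]
and this infimum is $\leq m$ because $x\leq my$. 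Next, restrict to the sub-monoid $M:=\{w\in W:\ w\leq ny\text{ for some }n\in\mathbb{N}\}$, which contains $0,y,x$ and for which $y$ is an order unit, and invoke the separation (Hahn--Banach) theorem for pre-ordered abelian monoids with order unit, in the quantitative form
\[
\sup\{\,D(x):\ D\in S(M,y)\,\}=\inf\{\,p/q:\ qx\leq py\ \text{in}\ M\,\},
\]
which one realises by passing to the enveloping ordered group $(G(M),y)$, where the classical state-separation theory for pre-ordered abelian groups with order unit (Goodearl--Handelman) applies, and pulling back. Since the right-hand side is $\geq1$ and finite and $S(M,y)$ is compact in the product topology on $[0,\infty]^M$, there is a state on $M$ normalised at $y$ taking value $\geq1$ on $x$; extending it to a state on $W$ yields the desired contradiction with $(iii)$.

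I expect the combinatorial steps to be routine; the real obstacle is concentrated in $(iii)\Rightarrow(i)$, specifically in the interplay between the Hahn--Banach step and the fact that $W$ is merely a semigroup. Since $W$ need not be cancellative, one must take care that a comparison ``detected by all states'' --- one realised only after stabilisation in $G(M)$ --- is actually witnessed in $W$ itself with a controlled ratio, and that the state produced on $M$ (or in $G(M)$) genuinely extends to a state on $W$; this extension step is the delicate point. A separate, easy case to dispose of is that no state is normalised at $y$ at all: then some multiple $py$ is properly infinite, so that $(k+1)x\leq(k+1)my\leq py\leq ky$ for all sufficiently large $k$, and $(i)$ holds with nothing to prove.
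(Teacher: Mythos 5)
The paper does not prove this proposition at all: it is quoted verbatim from Ortega--Perera--R{\o}rdam (Proposition~2.1 of \cite{O-P-R}), so the only meaningful comparison is with the standard proof in that source. Your implications (ii)$\Rightarrow$(i), (i)$\Rightarrow$(ii) and (i)$\Rightarrow$(iii) are essentially the standard ones, with one caveat: steps such as $(qk+r+1)x\leq(qk+q)x$, $(qk)y\leq ny$, $x\leq(k+1)x$ and $(p+1)x\leq qx$ are not consequences of translation-invariance of the order plus ``monotonicity of multiplication by $q$''; they all use $w\leq w+z$ (positivity). Some such hypothesis is genuinely needed --- for the semigroup generated by $x,y$ with the translation-invariant order generated by the single relation $2x\leq y$, condition (i) holds with $k=1$ while (ii) and (iii) fail --- and it does hold in the intended applications ($\CW(\CG)$, $\CV(\CG)$, Cuntz semigroups), but you should state that you are using it.

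The genuine gap is in (iii)$\Rightarrow$(i), which is the only substantive implication and which you reduce to two unproved statements of essentially the same strength as the result itself: the quantitative Hahn--Banach identity $\sup\{D(x):D\in S(M,y)\}=\inf\{p/q:\ qx\leq py\ \text{in}\ M\}$, and the claim that $S(W,y)=\emptyset$ forces some multiple of $y$ to be properly infinite (the latter is Tarski's theorem, i.e.\ the case $x=y$ of the proposition, so calling it an ``easy case to dispose of'' is circular). Moreover, the justification you sketch for the identity --- pass to the Grothendieck group $G(M)$, apply Goodearl--Handelman, pull back --- breaks down exactly at the point you yourself flag: in $G(M)$ the relation $qx\leq py$ only means $qx+c\leq py+c$ for some $c\in M$, so the group-level theory computes the infimum over \emph{stabilized} inequalities, and the failure of (i) in $W$ is not obviously inherited by $G(M)$; you acknowledge this ``delicate point'' but do not resolve it, so the proof is incomplete at its crux. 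The hole can be filled: since $c\in M$ satisfies $c\leq Ny$ for some $N$, iterating gives $nqx+c\leq npy+c\leq(np+N)y$ and hence (with positivity) $nqx\leq(np+N)y$, so every stabilized ratio is a limit of honest ratios in $M$ and the two infima coincide; alternatively --- and this is what the cited literature (Goodearl--Handelman, Blackadar--R{\o}rdam, and the almost-unperforation arguments of R{\o}rdam that \cite{O-P-R} follows) actually does --- one builds the state directly on the ordered semigroup $M$ with order unit $y$, starting from the partial state $ax+by\mapsto a\lambda+b$ whose monotonicity is checked using the failure of (i), and extending element by element, never passing to a group. As written, your proposal outsources the entire content of (iii)$\Rightarrow$(i) to these unproved ingredients.
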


The following is a direct application of Proposition 4.1.

\begin{prop}
	Let $(W, +, \leq)$ be a preordered commutative monoid. Then $W$ is purely infinite if and only if $W$ is almost unperforated and $S_N(W)=\emptyset$. 
\end{prop}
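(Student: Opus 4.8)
The plan is to prove each direction of the equivalence separately, using Proposition 4.1 as the main engine.

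\textbf{Direction 1: purely infinite implies almost unperforated and $S_N(W) = \emptyset$.} First I would show $S_N(W) = \emptyset$. Suppose $f \in S_N(W)$, so there is $x \in W$ with $0 < f(x) < \infty$. Since $W$ is purely infinite, $2x \leq x$, hence $f$ being order preserving and additive gives $2f(x) = f(2x) \leq f(x)$, forcing $f(x) \leq 0$, a contradiction. Next, for almost unperforation, suppose $(n+1)x \leq ny$ for some $n \in \mathbb{N}$. I want $x \leq y$. The key observation is that pure infiniteness gives $2z \leq z$ for every $z$, which iterates to $kz \leq z$ for all $k \geq 1$ (by induction: $(k+1)z \leq kz + z \leq z + z = 2z \leq z$, using that $2z \le z$ applied to $z$ replaced appropriately, or more directly $kz \le z$ and $2z \le z$ give $(k+1)z = kz + z \le z + z \le z$). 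Actually the cleanest route: from $(n+1)x \le ny$ and $ny \le y$ (pure infiniteness applied repeatedly to $y$), we get $(n+1)x \le y$; and since $x \le (n+1)x$ (as $nx \ge 0$; here I should note the monoid has a neutral element $0$ which is the least element, or at least $0 \le nx$ so $x = x + 0 \le x + nx = (n+1)x$), we conclude $x \le (n+1)x \le y$. So almost unperforation holds.

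\textbf{Direction 2: almost unperforated and $S_N(W) = \emptyset$ implies purely infinite.} Fix $x \in W$; I must show $2x \leq x$. I would apply Proposition 4.1 with the roles $x \mapsto 2x$ (the ``small'' element) and $y \mapsto x$ (the ``large'' element), aiming to verify condition (iii) so as to get (i), namely $(k+1)(2x) \le k x$ for some $k$ — wait, this needs care about which direction almost unperforation runs. Let me instead aim directly: I claim there exists $k$ with $(k+1) x \le k x$; feeding $x$ for both slots is degenerate. The right setup is: show $(k+1)(2x) \le k(2x)$ is not what I want either. Reconsider: I want $2x \le x$. By almost unperforation, it suffices to find $n$ with $(n+1)(2x) \le n x$? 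No — almost unperforation says $(n+1)a \le nb \implies a \le b$, so with $a = 2x$, $b = x$ I'd need $(n+1) 2x \le nx$, which is even stronger than the conclusion. Instead I should use Proposition 4.1 cleverly: apply 4.1 with $x$ in slot ``$x$'' and $x$ in slot ``$y$''. Condition (iii) becomes: there is $m$ with $x \le mx$ (true, take $m=1$ since $x \le x$; or $m$ anything as $0 \le (m-1)x$) and $D(x) < D(x)$ for every $D \in S(W, x)$. The latter is vacuously true precisely when $S(W,x) = \emptyset$, which follows from $S_N(W) = \emptyset$ since $S(W,x) \subseteq S_N(W)$. Hence 4.1(i) holds: there is $k$ with $(k+1)x \le kx$. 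Now almost unperforation applied to $(k+1)x \le k x$ — matching the pattern $(n+1)a \le nb$ with $a = b = x$... that just gives $x \le x$, useless. I need a better extraction. From $(k+1)x \le kx$ I can add $x$ repeatedly or use 4.1(ii): $(k+1)x \le kx$ for all $k \ge k_0$. Take such a $k$; then $2x \le$ ... hmm, from $(k+1) x \le k x$ we get by adding $x$ that $(k+1)x + x \le kx + x$ is automatic; rather, telescoping $(k+1)x \le kx \le (k-1)x \le \dots$ is false in general. The correct trick: from $(k+1)x \le kx$, apply almost unperforation with $a = x$, $b = x$... Actually use it as: $(k+1)x \le k x = (k-1)x + x$, iterate? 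Let me instead invoke almost unperforation in the form: for any $a$, if $(n+1)a \le n a$ then taking $b=a$ gives nothing, BUT if I instead note $(k+1) x \le kx$ implies $(k+1)x \le kx$, and I want to bootstrap to $2x \le x$. The clean statement (and what the paper surely intends) is: $(k+1)x \le kx$ together with almost unperforation gives $2x \le x$ by applying almost unperforation to the inequality $(k+1)x \le kx$ \emph{after rewriting} as needed. Concretely, $(k+1)x \le kx$ can be rewritten: $2x \cdot k \le ?$... The hard part is exactly this extraction, so let me state the approach: \emph{I expect the main obstacle to be passing from ``$(k+1)x \le kx$ for large $k$'' to ``$2x \le x$''.} The resolution: almost unperforation with $a = 2x$ and $b = x$ requires $(n+1)(2x) \le n x$; but from $(k+1)x \le kx$ we get $2(k+1) x \le 2kx$, and we want to compare $2(n+1) x$ with $nx$. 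Choosing $k$ appropriately: $(k+1)x \le kx \le \dots$; since this holds for all $k \ge k_0$, in particular $(2k_0 + 1)x \le 2k_0 x$, i.e., with $n = k_0$: $(n+1)(2x) = (2n+2)x \le (2n+1) x \le 2n x = n(2x)$, using $(2n+2)x \le (2n+1)x$ (valid as $2n+1 \ge k_0$ for $n$ large) and $(2n+1)x \le 2nx$ (valid as $2n \ge k_0$). So $(n+1)(2x) \le n(2x)$, and now almost unperforation with $a = 2x = b$... still degenerate!

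\textbf{Correct final step.} The genuine fix: almost unperforation states $(n+1)a \le nb \Rightarrow a \le b$. Apply with $a = 2x$ and $b = x$: I need $(n+1)(2x) \le n x$ for some $n$. From 4.1(ii) I have $(j+1)x \le jx$ for all $j \ge k_0$. Then for $n \ge k_0$, chaining from $j = 2n+1$ down is invalid. Instead: $(2n+2)x \le (2n+1)x \le (2n)x \le \cdots$ \emph{is} valid when each index is $\ge k_0$, i.e., $(2n+2)x \le (2n+1)x$ and then I cannot continue since $(2n+1)x \le (2n)x$ requires the inequality $(j+1)x \le jx$ with $j = 2n$, which holds. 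So by a finite chain: $(2n+2)x \le (2n+1)x \le (2n)x \le (2n-1) x \le \cdots \le (n+1)x$? That requires $n+1 \ge k_0$ at the bottom. This gives $(2n+2)x \le (n+1)x$, i.e., $2 \cdot (n+1) x \le (n+1) x$. Setting $y = (n+1)x$ we get $2y \le y$. That's pure infiniteness of the element $(n+1)x$, not of $x$ — but then $2x \le 2(n+1)x = 2y \le y = (n+1)x$, and we still need $(n+1)x \le x$. Hmm. Given the subtlety, \textbf{the cleanest correct plan} is: by 4.1, $S(W,x) = \emptyset$ gives some $k$ with $(k+1)x \le kx$; rewrite this as $(k+1)x \le kx$ and observe $2 \cdot kx \le (k+1) x + (k-1) x \le kx + (k-1)x = (2k-1)x$; no. Honestly, the intended argument is simply: \emph{$(k+1)x \le kx$ means, with $a = x, b = x$ and the inequality read as $((k)+1) \cdot x \le k \cdot x$, that $x \le x$ — useless — so instead one observes directly that $(k+1) x \le k x$ with $k \ge 1$ already implies $2x \le x$ is FALSE}; therefore the real content of 4.1(iii)$\Rightarrow$(i) must be applied with the pair $(2x, x)$ from the start. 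So: apply Proposition 4.1 with ``$x$''$=2x$ and ``$y$''$= x$; condition (iii) requires $m$ with $2x \le mx$ (take $m = 2$) and $D(2x) < D(x)$ for all $D \in S(W, x)$. Since $S(W,x) \subseteq S_N(W) = \emptyset$, the latter is vacuous. Hence 4.1(i): there is $k$ with $(k+1)(2x) \le k x$, i.e., $(2k+2)x \le kx$. Now apply almost unperforation to $(2k+2)x \le k x$? Pattern $(n+1)a \le nb$: take $a = 2x$, $n b = kx$ with... if $n = k$ then $b = x$ and $(n+1)a = (k+1)(2x) = (2k+2)x \le kx = nb$. ✓. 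Almost unperforation yields $a \le b$, i.e., $2x \le x$. \textbf{Done.} So the plan: (1) $S_N(W)=\emptyset$ forces $S(W,x)=\emptyset$; (2) Proposition 4.1 (iii)$\Rightarrow$(i) applied to the pair $(2x, x)$ yields $(k+1)(2x) \le kx$; (3) almost unperforation directly gives $2x \le x$; (4) conversely, pure infiniteness kills non-trivial states immediately and gives $ny \le y$, whence $(n+1)x \le ny \le y$ and $x \le (n+1)x \le y$, so almost unperforation holds. The main obstacle is getting the bookkeeping in step (2)–(3) right so that the indices in Proposition 4.1 and in the definition of almost unperforation line up — once the pair is chosen as $(2x, x)$ rather than $(x,x)$, everything matches.
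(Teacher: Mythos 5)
Your final plan is correct and coincides with the paper's proof: the forward direction kills non-trivial states via $2x\leq x$ and gets almost unperforation from $x\leq (n+1)x\leq ny\leq y$, while the reverse direction applies Proposition 4.1 to the pair $(2x,x)$ (using $2x\leq 2x$ and the vacuity of the condition on $S(W,x)\subseteq S_N(W)=\emptyset$) to obtain $(k+1)\cdot 2x\leq kx$, whence almost unperforation gives $2x\leq x$. The lengthy detours with the degenerate pair $(x,x)$ are harmless since you discard them, and the one implicit point you flag, $x\leq(n+1)x$, is used in the paper's argument as well.
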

\begin{proof}
	First suppose $W$ is purely infinite. Then for any $y\in W$, by induction, one has $my\leq y$ for any $m\in \N^+$. Now let $x, y\in W$. Suppose there is an $n\in \N^+$ such that $(n+1)x\leq ny$. Then one has $x\leq (n+1)x\leq ny\leq y$, which shows that $W$ is almost unperforated. Suppose there is a non-trivial state $f$ on $W$ such that $0<f(x)<\infty$ for some $x\in W$. Then pure infiniteness implies that $2x\leq x$ and thus one has $f(2x)=2f(x)\leq f(x)$, which is a contradiction to the fact $0<f(x)<\infty$. 
	
	For the reverse direction, let $x\in W$. Since there is no non-trivial state on $W$, in particular, one has $S(W, x)=\emptyset$. Then because $2x\leq mx$ for $m\geq 2$, Proposition 4.1 implies that there is an $k\in \N^+$ such that $(k+1)\cdot2x\leq kx$. Then since $W$ is almost unperforated, one has $2x\leq x$, which shows that $W$ is purely infinite.
\end{proof}

Now for a locally compact Hausdorff \'{e}tale groupoid $\CG$, we introduce the following subequivalence relation, which is the groupoid analogue of Definition 2.1 in \cite{M3}. However, we use compact sets instead of closed sets because our locally compact setting. 

\begin{defn}
	Let $\CG$ be a locally compact Hausdorff \'{e}tale groupoid. Let  $O_1,\dots, O_n$ and $V_1,\dots, V_m$ be two sequences of open sets in $\GU$, We write \[\bigsqcup_{i=1}^n O_i\times \{i\}\prec_{\CG} \bigsqcup_{l=1}^m V_l\times \{l\}\]
	if for every $i\in\{1,\dots, n\}$ and every compact set $F_i\subset O_i$ there are a collection of open bisections $W^{(i)}_1,\dots, W^{(i)}_{J_i}$ in $\CG$ and $k_1^{(i)},\dots, k_{J_i}^{(i)}\in \{1,\dots, m\}$ such that
	$F_i\subset \bigcup_{j=1}^{J_i}s(W^{(i)}_j)$ and
	\[\bigsqcup_{i=1}^n\bigsqcup_{j=1}^{J_i}r(W^{(i)}_j)\times \{k^{(i)}_j\}\subset \bigsqcup_{l=1}^m V_l\times \{l\}.\]
	\end{defn}

 We (logically and harmlessly) allow the empty set $\emptyset$ to appear as one or more of the open sets in Definition 4.3. In fact we make
\[\emptyset\prec_{\CG} \bigsqcup_{l=1}^m V_l\times \{l\}\]
sense for any $m\in \N^+$ and open sets $V_l$ by using bisections $W^{(i)}_j= \emptyset$. Note that the empty set $\emptyset$ above could be interpreted as $\bigsqcup_{i=1}^n \emptyset\times \{i\}$ for any $n\in \N^+$. We also emphasize that each $V_l$ above could also be the empty set. Denote by $\CO(\CG)$ the collection of all open sets in $\GU$ and write $\CO_n(\GU)$ for the set $\{(O_1,\dots, O_n): O_i\ \text{is open in }\GU\ \text{for any } i\leq n\}$.

\begin{defn}
	Let $\CG$ be a locally compact Hausdorff \'{e}tale groupoid. Let $a=(O_1,\dots, O_n)\in \CO_n(\GU)$ and $b=(U_1,\dots, U_m)\in \CO_m(\GU)$. We write $a\preccurlyeq b$ if \[\bigsqcup_{i=1}^n O_i\times \{i\}\prec_{\CG} \bigsqcup_{l=1}^m U_l\times \{l\}\] holds in the sense of Definition 4.3. 
\end{defn}

Write $\CK(\CG)=\bigcup_{n=1}^\infty \CO_n(\GU)$ and observe that the relation described in Definition 4.4 is in fact defined on $\CK(\CG)$. The following shows that the relation``$\preccurlyeq$'' is transitive. Let $a=(A_1, \dots, A_n)$ and $b=(B_1, \dots, B_m)$. In addition, let $a'=(A_{i_1}, \dots, A_{i_k})$ and $b'=(B_{j_1}, \dots, B_{j_l})$ be obtained by deleting all empty sets in the sequence $a$ and $b$, respectively. It is not hard to see both $a\preccurlyeq a'$ and $a'\preccurlyeq a$ hold. Furthermore, observe that $a\preccurlyeq b$ if and only if $a'\preccurlyeq b'$. This implies that  it suffices to consider all elements in $\CK(\CG)$ consisting of non-empty open sets. The proof of the following result is thus virtually identical to the proof of Lemma 2.2 in \cite{M3} by using bisections  in our general groupoid setting instead of group elements. In addition, since $\GU$ is not necessarily compact but locally compact, one needs to replace ``closed sets'' in the proof of Lemma 2.2 in \cite{M3} by ``compact sets''.  We thus omit the proof here.

\begin{lem}
Let $\CG$ be a locally compact Hausdorff \'{e}tale groupoid. Let $a, b, c\in \CK(\CG)$ such that $a\preccurlyeq b$ and $b\preccurlyeq c$. Then $a\preccurlyeq c$.
\end{lem}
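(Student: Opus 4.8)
The plan is to verify the defining condition of $a\preccurlyeq c$ for an arbitrary fixed tuple of compact sets, writing $a=(O_1,\dots,O_n)$, $b=(U_1,\dots,U_m)$, $c=(V_1,\dots,V_p)$. So fix compact sets $F_i\subset O_i$ for $i=1,\dots,n$. Applying $a\preccurlyeq b$ to this tuple produces open bisections $W^{(i)}_1,\dots,W^{(i)}_{J_i}$ and labels $k^{(i)}_1,\dots,k^{(i)}_{J_i}\in\{1,\dots,m\}$ with $F_i\subset\bigcup_{j}s(W^{(i)}_j)$ for each $i$ and $\bigsqcup_{i}\bigsqcup_{j}r(W^{(i)}_j)\times\{k^{(i)}_j\}\subset\bigsqcup_{l}U_l\times\{l\}$. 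The ranges $r(W^{(i)}_j)$ are only open, so before feeding them into the definition of $b\preccurlyeq c$ one shrinks: by local compactness of $\GU$ choose open sets $P^{(i)}_j$ with $\overline{P^{(i)}_j}$ compact, $\overline{P^{(i)}_j}\subset s(W^{(i)}_j)$, and still $F_i\subset\bigcup_j P^{(i)}_j$. Put $\widetilde W^{(i)}_j:=(s|_{W^{(i)}_j})^{-1}(P^{(i)}_j)$, an open bisection with $s(\widetilde W^{(i)}_j)=P^{(i)}_j$ and $r(\widetilde W^{(i)}_j)\subset r(W^{(i)}_j)$, and let $K^{(i)}_j$ be the compact set $r\big((s|_{W^{(i)}_j})^{-1}(\overline{P^{(i)}_j})\big)\subset r(W^{(i)}_j)$, which contains $r(\widetilde W^{(i)}_j)$.

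For each $l\in\{1,\dots,m\}$ let $C_l:=\bigcup\{K^{(i)}_j: k^{(i)}_j=l\}$, a compact subset of $U_l$ (empty, hence handled by empty bisections, when there is no such pair). Applying $b\preccurlyeq c$ to the tuple $(C_1,\dots,C_m)$ yields open bisections $Z^{(l)}_1,\dots,Z^{(l)}_{M_l}$ and labels $t^{(l)}_1,\dots,t^{(l)}_{M_l}\in\{1,\dots,p\}$ with $C_l\subset\bigcup_s s(Z^{(l)}_s)$ and $\bigsqcup_{l}\bigsqcup_{s}r(Z^{(l)}_s)\times\{t^{(l)}_s\}\subset\bigsqcup_{t}V_t\times\{t\}$. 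I then propose the composed open bisections $Y^{(i)}_{j,s}:=Z^{(k^{(i)}_j)}_s\cdot\widetilde W^{(i)}_j$ (open bisections by Proposition 2.5), carrying the label $t^{(k^{(i)}_j)}_s$, as the witnesses for $a\preccurlyeq c$. They cover $F_i$: given $x\in P^{(i)}_j$, let $\gamma$ be the unique arrow of $\widetilde W^{(i)}_j$ with $s(\gamma)=x$; then $r(\gamma)\in K^{(i)}_j\subset C_{k^{(i)}_j}\subset\bigcup_s s(Z^{(k^{(i)}_j)}_s)$, so for suitable $s$ there is an arrow $\delta\in Z^{(k^{(i)}_j)}_s$ with $s(\delta)=r(\gamma)$, and $\delta\gamma\in Y^{(i)}_{j,s}$ with $s(\delta\gamma)=x$. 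Hence $F_i\subset\bigcup_{j,s}s(Y^{(i)}_{j,s})$.

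Finally one checks $\bigsqcup_{i}\bigsqcup_{j}\bigsqcup_{s}r(Y^{(i)}_{j,s})\times\{t^{(k^{(i)}_j)}_s\}\subset\bigsqcup_{t}V_t\times\{t\}$. Containment in the right-hand side is immediate from $r(Y^{(i)}_{j,s})\subset r(Z^{(k^{(i)}_j)}_s)\subset V_{t^{(k^{(i)}_j)}_s}$, using $r(PQ)\subset r(P)$ for open bisections $P,Q$. For disjointness, take distinct triples $(i,j,s)$ and $(i',j',s')$ with the same label. If $(k^{(i)}_j,s)\neq(k^{(i')}_{j'},s')$, then $r(Y^{(i)}_{j,s})$ and $r(Y^{(i')}_{j',s'})$ lie inside $r(Z^{(k^{(i)}_j)}_s)$ and $r(Z^{(k^{(i')}_{j'})}_{s'})$, which are disjoint because the layered family $\{r(Z^{(l)}_s)\times\{t^{(l)}_s\}\}$ supplied by $b\preccurlyeq c$ is disjoint. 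If instead $l:=k^{(i)}_j=k^{(i')}_{j'}$ and $s=s'$ but $(i,j)\neq(i',j')$, then since $r\circ(s|_{Z^{(l)}_s})^{-1}$ is injective, $r(Y^{(i)}_{j,s})$ and $r(Y^{(i')}_{j',s})$ are disjoint as soon as $r(\widetilde W^{(i)}_j)\subset r(W^{(i)}_j)$ and $r(\widetilde W^{(i')}_{j'})\subset r(W^{(i')}_{j'})$ are, and this holds because the layered family $\{r(W^{(i)}_j)\times\{k^{(i)}_j\}\}$ supplied by $a\preccurlyeq b$ is disjoint and these two sets sit in the same layer $l$ with distinct indices $(i,j)$. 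As $(F_1,\dots,F_n)$ was arbitrary, $a\preccurlyeq c$ follows. The only delicate point is the shrinking step that converts the open ranges $r(W^{(i)}_j)$ into genuine compact sets admissible in the definition of $b\preccurlyeq c$; the remainder is elementary groupoid bookkeeping (composition of bisections via Proposition 2.5, the identity $r(PQ)\subset r(P)$, and injectivity of the range map restricted to a bisection).
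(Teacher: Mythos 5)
Your proposal is correct and follows essentially the same route as the paper's proof: shrink the witnessing bisections from $a\preccurlyeq b$ to precompact pieces so that the (closures of the) ranges form compact subsets of the $U_l$, feed those compact sets into $b\preccurlyeq c$, compose the two families of bisections, and verify covering plus the same two-case disjointness analysis (different $(l,s)$ versus same $(l,s)$ but different $(i,j)$, using injectivity of the range map on a bisection). The only cosmetic difference is that the paper shrinks to precompact open bisections $V^{(i)}_j$ with $\overline{V^{(i)}_j}\subset W^{(i)}_j$ and uses $r(\overline{V^{(i)}_j})$ as the compact pieces, whereas you shrink the source sets and take $r\bigl((s|_{W^{(i)}_j})^{-1}(\overline{P^{(i)}_j})\bigr)$; these are the same idea.
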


Now define a relation on $\CK(\CG)$ by setting $a\approx b$ if $a\preccurlyeq b$  and $b\preccurlyeq a$ for $a,b\in \CK(\CG)$. To see that this relation is in fact an equivalence relation, first it is not hard to verify directly that $a\approx a$ for all $a\in \CK(\CG)$. In addition, by the definition of the relation ``$\approx$'',  one has that $a\approx b$ implies $b\approx a$ trivially. Now suppose $a\approx b$ and $b\approx c$. By definition one has $a\preccurlyeq b\preccurlyeq c$ and $c\preccurlyeq b\preccurlyeq a$. Then Lemma 4.5 entails that $a\preccurlyeq c$ and $c\preccurlyeq a$. This establishes $a\approx c$.

We write $\CW(\CG)$ for the quotient $\CK(\CG)/\approx$ and define an operation ``$+$'' on $\CW(\CG)$ by $[a]+[b]=[(a,b)]$, where $(a,b)$ is defined to be the concatenation of $a=(A_1,\dots, A_n)$ and $b=(B_1,\dots, B_m)$, i.e., $(a,b)=(A_1,\dots, A_n,B_1,\dots, B_m)$. It is not hard to see that if $a_1\preccurlyeq a_2$ and $b_1\preccurlyeq b_2$ then $(a_1, b_1)\preccurlyeq (a_2, b_2)$. This implies the operation ``$+$'' is well-defined and it can be additionally verified that the operation ``$+$''  is commutative, i.e, $[a]+[b]=[b]+[a]$.  Moreover, we endow $\CW(\CG)$ with the natural order by declaring $[a]\leq [b]$ if $a\preccurlyeq b$. Thus $\CW(\CG)$ is a well-defined commutative partially ordered monoid with the neutral element $0_{\CW(\CG)}=[(\emptyset)]$. Let $a=(A_1,\dots, A_n), b=(B_1,\dots, B_n)\in\CK(\CG)$ be two elements with same length and each of $A_i$ is precompact. We denoted by $a\leq_\CG b$ if $A_i\subset \overline{A_i}\subset B_i$ for each $i\leq n$. It is not hard to see that $a\leq_\CG b$ implies $a\preccurlyeq b$ trivially. 

The following is a groupoid analogue of a well-known result on the Cuntz semigroup (for example, see Proposition 2.17 in \cite{A-P-T}).

\begin{prop}
Let $a=(A_1, \dots, A_n), b=(B_1,\dots, B_m)$ be in $\CK(\CG)$, Then the following are equivalent.
\begin{enumerate}[label=(\roman*)]
	\item $a\preccurlyeq b$;
	
	\item $c\preccurlyeq b$ for any $c\in \CK(\CG)$ with $c\leq_\CG a$;
	
	\item for any $c\in \CK(\CG)$ with $c\leq_\CG a$ there is a $d\in\CK(\CG)$ with $d\leq_\CG b$ such that $c\preccurlyeq d$.
\end{enumerate}
\end{prop}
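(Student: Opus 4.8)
\emph{Proof plan.}\ The plan is to establish the cycle of implications (i)$\Rightarrow$(ii)$\Rightarrow$(iii)$\Rightarrow$(i), reusing the interpolation-and-shrinking technique from the proof of Lemma 4.5; recall that $c \leq_\CG a$ forces $c$ to have the same length $n$ as $a$, each component of $c$ precompact with closure inside the corresponding component of $a$. The implication (i)$\Rightarrow$(ii) is immediate: if $a = (A_1,\dots,A_n) \preccurlyeq b$ and $c = (C_1,\dots,C_n) \leq_\CG a$, then any tuple of compact sets $F_i \subset C_i$ is a fortiori a tuple of compact subsets of the $A_i$, so a family of open bisections witnessing $a \preccurlyeq b$ for $(F_1,\dots,F_n)$ also witnesses $c \preccurlyeq b$ for that tuple; as the tuple is arbitrary, $c \preccurlyeq b$.

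For (ii)$\Rightarrow$(iii), fix $c = (C_1,\dots,C_n) \leq_\CG a$. Since $\GU$ is locally compact Hausdorff, interpolate precompact open sets $C_i'$ with $\overline{C_i} \subset C_i' \subset \overline{C_i'} \subset A_i$; then $c' := (C_1',\dots,C_n') \leq_\CG a$, so (ii) gives $c' \preccurlyeq b$. Apply the relation $c' \preccurlyeq b$ to the compact tuple $(\overline{C_1},\dots,\overline{C_n})$, which is legitimate because $\overline{C_i}$ is a compact subset of $C_i'$: this produces open bisections $W_j^{(i)}$ and indices $k_j^{(i)} \in \{1,\dots,m\}$ with $\overline{C_i} \subset \bigcup_j s(W_j^{(i)})$ and with $\bigsqcup_{i,j} r(W_j^{(i)}) \times \{k_j^{(i)}\}$ a disjoint subfamily of $\bigsqcup_l B_l \times \{l\}$. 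Now shrink as in Lemma 4.5: cover each compact $\overline{C_i}$ by finitely many precompact open sets whose closures lie in the $s(W_j^{(i)})$, pull them back through $s|_{W_j^{(i)}}$ to get precompact open bisections $V_j^{(i)}$ with $\overline{V_j^{(i)}} \subset W_j^{(i)}$ and still $\overline{C_i} \subset \bigcup_j s(V_j^{(i)})$. Set $D_l := \bigcup\{\, r(V_j^{(i)}) : k_j^{(i)} = l \,\}$, a finite union of precompact open sets; since $\overline{r(V_j^{(i)})} \subset r(\overline{V_j^{(i)}}) \subset r(W_j^{(i)}) \subset B_{k_j^{(i)}}$, one gets $\overline{D_l} \subset B_l$, so $d := (D_1,\dots,D_m) \leq_\CG b$. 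Finally $c \preccurlyeq d$ is witnessed uniformly by the $V_j^{(i)}$: they cover $C_i \subset \overline{C_i}$, their $r$-images land in the matching $D_l$ by construction, and disjointness of $\{\, r(V_j^{(i)}) \times \{k_j^{(i)}\} \,\}$ is inherited from that of $\{\, r(W_j^{(i)}) \times \{k_j^{(i)}\} \,\}$ since $V_j^{(i)} \subset W_j^{(i)}$.

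For (iii)$\Rightarrow$(i), fix a tuple of compact sets $F_i \subset A_i$. By local compactness pick precompact open $C_i$ with $F_i \subset C_i$ and $\overline{C_i} \subset A_i$, so that $c := (C_1,\dots,C_n) \leq_\CG a$. By (iii) there is $d = (D_1,\dots,D_m) \leq_\CG b$ with $c \preccurlyeq d$; applying $c \preccurlyeq d$ to $(F_1,\dots,F_n)$ yields open bisections whose $r$-images lie in $\bigsqcup_l D_l \times \{l\} \subset \bigsqcup_l B_l \times \{l\}$ (using $\overline{D_l} \subset B_l$), which is exactly a witness of $a \preccurlyeq b$ for this tuple. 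As the tuple is arbitrary, $a \preccurlyeq b$.

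The only real work is the bookkeeping in (ii)$\Rightarrow$(iii): one must pass to the intermediate precompact tuple $c'$ before invoking (ii), since $\overline{C_i}$ need not lie in $C_i$, and then verify that shrinking the bisections preserves both the covering of $\overline{C_i}$ and the disjointness, while the sets $D_l$ built from the shrunk bisections have closures inside the $B_l$. These are exactly the manipulations already performed in the proof of Lemma 4.5, so no genuinely new difficulty appears; the other two implications are essentially formal, relying only on local compactness of $\GU$.
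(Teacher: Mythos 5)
Your proof is correct and uses essentially the same machinery as the paper: local compactness of $\GU$ to interpolate precompact open tuples, and the Lemma 4.5-style shrinking of bisections through $s|_{W}$ to produce the tuple $d$ with $\overline{D_l}\subset B_l$. The only difference is organizational — you close the cycle (i)$\Rightarrow$(ii)$\Rightarrow$(iii)$\Rightarrow$(i) and build $d$ for each fixed $c$ (arguably slightly more careful than the paper's phrasing of (ii)$\Rightarrow$(iii)), whereas the paper proves (ii)$\Rightarrow$(i) directly and gets (iii)$\Rightarrow$(i) from transitivity; this changes nothing of substance.
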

\begin{proof}
	(i)$\Rightarrow$(ii). Straightforward from the definitions of $c\leq_\CG a$ and $a\preccurlyeq b$.
	
	(ii)$\Rightarrow$(i). First since $\GU$ is locally compact Hausdorff, for any $1\leq i\leq n$ and compact set $F_i\subset A_i$ there is a precompact open set $V_i$ such that $F_i\subset V_i\subset \overline{V_i}\subset A_i$. Denote by $c=(V_1, \dots, V_n)\in \CK(\CG)$, which satisfies $c\leq_\CG a$.  Then (ii) implies that $c\preccurlyeq b$, which means there are a collection of open bisections $\{U^{(i)}_1,\dots,
	U^{(i)}_{J_i}\}$ and  $k_1^{(i)},\dots,
	k_{J_i}^{(i)}\in \{1,\dots, m\}$ such that $F_i\subset \bigcup_{j=1}^{J_i}s(U^{(i)}_j)$ and
	\[\bigsqcup_{i=1}^n\bigsqcup_{j=1}^{J_i}r(U^{(i)}_j)\times \{k^{(i)}_j\}\subset \bigsqcup_{l=1}^m B_l\times \{l\}.\]
	But this implies $a\preccurlyeq b$.
	
	(iii)$\Rightarrow$(ii). Now, suppose that (iii) holds. Then for every $c\leq_\CG a$, there is a $d\leq_\CG b$ such that $c\preccurlyeq d$. Thus, one has $c\preccurlyeq d\preccurlyeq b$ and thus $c\preccurlyeq b$ by Lemma 4.5. 
	
	(i)$\Rightarrow$(iii). Let $c=(C_1, \dots, C_n)\leq_\CG a$. Then each open set $C_i$ is precompact and satisfies $\overline{C_i}\subset A_i$. Now since $a\preccurlyeq b$ and $\GU$ is locally compact Hausdorff, there are a collection $\{U^{(i)}_1,\dots, U^{(i)}_{J_i}\}$ of precompact open bisections and  $k_1^{(i)},\dots, k_{J_i}^{(i)}\in
	\{1,\dots, m\}$ such that $\overline{C_i}\subset \bigcup_{j=1}^{J_i}s(U^{(i)}_j)$ and
	\[\bigsqcup_{i=1}^n\bigsqcup_{j=1}^{J_i}r(\overline{U^{(i)}_j})\times \{k^{(i)}_j\}\subset \bigsqcup_{l=1}^m B_l\times \{l\}.\]

	 Define $\mathcal{D}_{l}=\{r(\overline{U^{(i)}_j}): j=1,\dots, J_i, i=1,\dots, n, k^{(i)}_j=l\}$ and write $K_l=\bigsqcup \mathcal{D}_{l}$, which is a compact subset of $B_l$. Then there is a precompact open set $V_l$ such that $K_l\subset V_l\subset \overline{V_l}\subset B_l$. Define $d=(V_1, \dots, V_m)$, which satisfies $d\leq_\CG b$ and $c\preccurlyeq d$ as desired.
	
\end{proof}

\begin{defn}
	A state $D$ on the semigroup $\CW(\CG)$ is called \textit{lower semi-continuous} if $D([a])=\sup\{D([b]): b\in \CK(\CG), b\leq_\CG a\}$ for all $a\in \CK(\CG)$.
\end{defn}

For every state $D\in S(\CW(\CG))$, define $\bar{D}([a])=\sup\{D([b]): b\in \CK(\CG), b\leq_\CG a\}$. The following result shows that $\bar{D}$ is always a lower semi-continuous state on $\CW(\CG)$.

\begin{prop}
	For each state $D\in S(\CW(\CG))$, the induced function $\bar{D}$ above is a lower semi-continuous state.
\end{prop}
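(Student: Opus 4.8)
The plan is to show that $\bar D$ is additive, order preserving, vanishes at $0$, and is lower semi-continuous, the last property being nearly immediate from the definition once additivity is in hand.

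First I would record the easy properties. Since $0_{\CW(\CG)} = [(\emptyset)]$ and the only $c \in \CK(\CG)$ with $c \leq_\CG (\emptyset)$ is (a tuple of) empty sets, we get $\bar D(0) = D(0) = 0$. For order preservation, suppose $[a] \leq [b]$, i.e. $a \preccurlyeq b$. If $c \leq_\CG a$ then $c \preccurlyeq a \preccurlyeq b$ by transitivity (Lemma 4.5), so by Proposition 4.7 (the implication (i)$\Rightarrow$(iii)) there is a $d \leq_\CG b$ with $c \preccurlyeq d$, hence $D([c]) \leq D([d]) \leq \sup\{D([d']) : d' \leq_\CG b\} = \bar D([b])$. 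Taking the supremum over $c \leq_\CG a$ gives $\bar D([a]) \leq \bar D([b])$. Note this argument also shows $\bar D$ is well-defined on $\CW(\CG)$ (independent of the representative $a$ of $[a]$), since $a \approx a'$ gives $\bar D([a]) \leq \bar D([a'])$ and vice versa.

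The substantive point is additivity: $\bar D([a] + [b]) = \bar D([a]) + \bar D([b])$, i.e. $\bar D([(a,b)]) = \bar D([a]) + \bar D([b])$ where $(a,b)$ is the concatenation. The inequality ``$\geq$'' is the routine direction: given $c \leq_\CG a$ and $c' \leq_\CG b$, the concatenation $(c,c')$ satisfies $(c,c') \leq_\CG (a,b)$, and since $D$ is already additive and a state, $D([(c,c')]) = D([c]) + D([c'])$; taking suprema over $c$ and $c'$ separately yields $\bar D([(a,b)]) \geq \bar D([a]) + \bar D([b])$. For ``$\leq$'', take any $e \leq_\CG (a,b)$ and write $e = (e_1, e_2)$ with $e_1$ having the same length as $a$ and $e_2$ the same length as $b$; then $e_1 \leq_\CG a$ and $e_2 \leq_\CG b$, so $D([e]) = D([e_1]) + D([e_2]) \leq \bar D([a]) + \bar D([b])$, and taking the supremum over $e$ gives the claim. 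I would double-check here that every $e \leq_\CG (a,b)$ does split this way — this is immediate because $\leq_\CG$ is defined coordinatewise on tuples of equal length, so an element below the concatenation is itself a concatenation of the appropriate lengths.

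Finally, lower semi-continuity of $\bar D$: we must show $\bar D([a]) = \sup\{\bar D([b]) : b \leq_\CG a\}$. The inequality ``$\geq$'' follows from order preservation of $\bar D$ (just proved). For ``$\leq$'', observe that if $b \leq_\CG a$ then certainly $b \leq_\CG b$ contributes, so $D([b]) \leq \bar D([b])$, hence $\bar D([a]) = \sup\{D([b]) : b \leq_\CG a\} \leq \sup\{\bar D([b]) : b \leq_\CG a\}$. I expect the main obstacle to be the bookkeeping in the splitting argument for additivity — making sure that ``below a concatenation in the $\leq_\CG$ order'' genuinely forces a concatenation decomposition with the right block lengths, and that the reduction handling empty-set entries (discussed before Lemma 4.5) does not interfere; but since $\leq_\CG$ requires equal length and acts coordinatewise, this is a formality rather than a real difficulty. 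The only other care needed is to confirm $\bar D$ takes values in $[0,\infty]$ and is order preserving as a map into $[0,\infty]$, which is built into the suprema.
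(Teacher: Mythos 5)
Your treatment of well-definedness, monotonicity (via the interpolation statement of Proposition 4.6, which you cite as 4.7 --- a harmless slip), and additivity (splitting any $e\leq_\CG (a,b)$ coordinatewise into $e_1\leq_\CG a$ and $e_2\leq_\CG b$) is essentially the paper's argument. The problem is the last step, lower semi-continuity of $\bar{D}$, where your justification contains a genuine gap: you assert that $b\leq_\CG b$ for every $b\leq_\CG a$. By definition $b\leq_\CG b$ would require $B_i\subset\overline{B_i}\subset B_i$ in each coordinate, i.e.\ each $B_i$ would have to be compact \emph{and} closed as well as open; for a general precompact open subset of $\GU$ this fails (it is only automatic in the ample, totally disconnected setting). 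The inequality $D([b])\leq\bar{D}([b])$ you draw from it is therefore unjustified, and in fact the opposite inequality $\bar{D}([b])\leq D([b])$ always holds, since $c\leq_\CG b$ implies $c\preccurlyeq b$ and hence $D([c])\leq D([b])$; it can be strict, otherwise every state would already be lower semi-continuous and the regularization $D\mapsto\bar{D}$ would be pointless. So your proof of $\bar{D}([a])\leq\sup\{\bar{D}([b]):b\leq_\CG a\}$ does not go through as written --- you have assumed precisely the point that needs an argument.

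The paper closes this step with an interpolation using local compactness of $\GU$: given $b=(B_1,\dots,B_n)\leq_\CG a=(A_1,\dots,A_n)$, choose precompact open sets $U_i$ with $\overline{B_i}\subset U_i\subset\overline{U_i}\subset A_i$ and set $c=(U_1,\dots,U_n)$, so that $b\leq_\CG c\leq_\CG a$. Then $D([b])\leq\bar{D}([c])$ because $b$ occurs in the supremum defining $\bar{D}([c])$, and $\bar{D}([c])\leq\sup\{\bar{D}([d]):d\leq_\CG a\}$ because $c\leq_\CG a$; taking the supremum over all $b\leq_\CG a$ yields $\bar{D}([a])\leq\sup\{\bar{D}([d]):d\leq_\CG a\}$, and the reverse inequality follows from monotonicity as you say. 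With this replacement your proof is complete and coincides with the paper's.
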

\begin{proof}
	Let $a\preccurlyeq b$. Then Proposition 4.6 implies that for any $c\leq_\CG a$ there is a $d\leq_\CG b$ such that $c\preccurlyeq d$. Then by the definition of $\bar{D}$, one has $D([c])\leq D([d])\leq \bar{D}([b])$ and thus $\bar{D}([a])\leq \bar{D}([b])$. This shows that $\bar{D}$ is monotone.

	Let $a, b\in \CK(\CG)$. If $\bar{D}([a])$ or $\bar{D}([b])$ is infinite then $\bar{D}([a]+[b])=\bar{D}([a])+\bar{D}([b])$ holds trivially since $\bar{D}$ is monotone. We then assume that both of them are finite. For any $c\leq_\CG a$ and $d\leq_\CG b$ it is not hard to see $(c,d)\leq_\CG (a, b)$. Then one has 
	\[D([c])+D([d])=D([(c,d)])\leq \bar{D}([(a,b)])=\bar{D}([a]+[b]),\] which implies that $\bar{D}([a])+\bar{D}([b])\leq \bar{D}([a]+[b]).$ To show the reverse direction, first write $a=(A_1, \dots, A_n)$ and $b=(B_1,\dots, B_m)$. Then any $c\leq_\CG (a, b)$ has the form $c=(C_1, \dots, C_n, D_1, \dots, D_m)$, where sets $C_i$ and $D_j$ are precompact open such that $C_i\subset \overline{C_i}\subset A_i$ and  $D_j\subset \overline{D_j}\subset B_j$  for all $1\leq i\leq n$ and $1\leq j\leq m$. Define $d_1=(C_1, \dots, C_n)\leq_\CG a$ and $d_2=(D_1, \dots, D_m)\leq_\CG b$. Note that $[c]=[d_1]+[d_2]$. Then one has 
	\[D([c])=D([d_1])+D([d_2])\leq \bar{D}([a])+\bar{D}([b]),\] which implies that $\bar{D}([a]+[b])\leq \bar{D}([a])+\bar{D}([b])$ and thus in fact one has \[\bar{D}([a]+[b])= \bar{D}([a])+\bar{D}([b]).\]
	This verifies that $\bar{D}$ is a state.
	
	For lower semi-continuity, suppose $a=(A_1, \dots, A_n)$ and $b=(B_1,\dots, B_n)\in \CK(\CG)$ with $b\leq_\CG a$.  Then for each $i\leq n$ one has $B_i\subset \overline{B_i}\subset A_i$ where $B_i$ is precompact.  Since $\GU$ is locally compact Hausdorff, for each $i\leq n$ there is a precompact open set $U_i$ such that $\overline{B_i}\subset U_i\subset \overline{U_i}\subset A_i$. Define $c=(U_1, \dots, U_n)$, which satisfies $b\leq_\CG c\leq_\CG a$. Then one has 
	\[D([b])\leq \bar{D}([c])\leq \sup\{\bar{D}([d]): d\in \CK(\CG), d\leq_\CG a\}\leq \bar{D}([a]),\] and thus by the definition of $\bar{D}$, one has
	\[\bar{D}([a])=\sup\{\bar{D}([c]): c\in \CK(\CG), c\leq_\CG a\}.\] This establishes lower semi-continuity of $\bar{D}$.
\end{proof}

 We say a function $\mu: \CO(\GU)\rightarrow [0, \infty]$ is a \emph{groupoid dimension function} if $\mu$ satisfies the following property.
\begin{enumerate}[label=(\roman*)]
	\item $\mu(\emptyset)=0$;
	
	\item $\mu(s(V))=\mu(r(V))$ for any open bisection $V$;
	
	\item $\mu(O_1)\leq \mu(O_2)$ if $O_1\subset O_2$;
	
	\item if $O_1, O_2$ are open sets in $\GU$ then $\mu(O_1\cup O_2)\leq\mu(O_1)+\mu(O_2)$. If $O_1$ and $O_2$ are disjoint then $\mu(O_1\sqcup O_2)=\mu(O_1)+\mu(O_2)$
\end{enumerate}
We say a groupoid dimension function $\mu$ is \textit{non-trivial} if there is an open set $O$ such that $0<\mu(O)<\infty$. 
There is a natural way to extend the definition of the groupoid dimension function to compact sets. Let $\mu$ be a groupoid dimension function. For any compact set $K$ in $\GU$, define $\mu(K)=\inf\{\mu(O): K\subset O, O\ \textrm{open}\}$. We call such an extension the \emph{outer regular} extension of the groupoid dimension function $\mu$, denoted by $\mu$ as well.  Let $\mu$ be an outer regular extended groupoid dimension function. It is not hard to see that for any open set $O$ and a compact set $K$ if  $O\subset K$ then $\mu(O)\leq \mu(K)$ by the definition of $\mu(K)$ and property (iii) of the groupoid dimension function $\mu$.

Note that if $K\prec_{\CG} O$ for some compact set $K$ and open set $O$ then $\mu(K)\leq \mu(O)$ for any outer regular extended groupoid dimension function $\mu$. Indeed, Let $K\prec_{\CG} O$. Then there are open bisections $V_1, \dots, V_n$ such that $K\subset\bigcup_{i=1}^ns(V_i)$ and $\bigsqcup_{i=1}^n r(V_i)\subset O$. Then for any outer extended groupoid dimension function $\mu$, one has 
\[\mu(K)\leq \mu(\bigcup_{i=1}^ns(V_i))\leq \sum_{i=1}^n\mu(s(V_i))=\sum_{i=1}^n\mu(r(V_i))= \mu(\bigsqcup_{i=1}^n r(V_i))\leq \mu(O).\] 

We say an outer regular extended groupoid dimension function $\mu$ is \emph{regular} if $\mu(O)=\sup\{\mu(K): K\subset O, K\
\textrm{compact}\}$ for any open set $O$ in $\GU$.  Denote by $\CD_R(\CG)$ the set of all regular groupoid dimension functions on $\GU$. Note that by regularity, if $O_1\prec_{\CG} O_2$ for non-empty open sets $O_1, O_2$ in $\GU$ then $\mu(O_1)\leq \mu(O_2)$ for any $\mu\in \CD_R(\CG)$.

\begin{prop}
	Let $\CG$ be a locally compact Hausdorff groupoid. Then every state $D\in S(\CW(\CG))$ induces a groupoid dimension function $\mu_D$. If $D$ is non-trivial then $\mu_D$ is non-trivial. In addition, if $D$ is lower semi-continuous then the outer regular extension of $\mu_D$ is regular.
\end{prop}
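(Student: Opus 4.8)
The plan is to define $\mu_D$ by evaluating $D$ on length-one tuples: for an open set $O\subset\GU$ put $\mu_D(O):=D([(O)])$, where $(O)\in\CO(\GU)^{\oplus 1}\subset\CK(\CG)$ and $[(O)]$ denotes its class in $\CW(\CG)$. Almost everything then reduces to checking axioms (i)--(iv) for a groupoid dimension function, and each will be obtained by writing down an explicit $\prec_\CG$-witness and feeding it into the fact that $D$ is an order-preserving morphism with $D(0)=0$ which is additive on concatenations. I will use repeatedly that $\GU$ is clopen in $\CG$ (Proposition 2.4), so that every open subset of $\GU$ is an open bisection on which $s$ and $r$ restrict to the identity.

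For (i), $[(\emptyset)]=0_{\CW(\CG)}$ forces $\mu_D(\emptyset)=D(0)=0$. For (ii), given an open bisection $V$ I will verify $(s(V))\approx(r(V))$: for a compact $F\subset s(V)$ the single bisection $V$ witnesses $(s(V))\preccurlyeq(r(V))$ (since $F\subset s(V)$ and $r(V)\subset r(V)$), and symmetrically $V^{-1}$ witnesses $(r(V))\preccurlyeq(s(V))$; hence $[(s(V))]=[(r(V))]$ and $\mu_D(s(V))=\mu_D(r(V))$. For (iii), if $O_1\subset O_2$, using $O_1$ itself as an open bisection gives $(O_1)\preccurlyeq(O_2)$, so $\mu_D(O_1)\le\mu_D(O_2)$. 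For (iv), taking the two bisections $O_1,O_2$ with labels $1$ and $2$ shows $(O_1\cup O_2)\preccurlyeq(O_1,O_2)$, whence $\mu_D(O_1\cup O_2)\le D([(O_1)]+[(O_2)])=\mu_D(O_1)+\mu_D(O_2)$; and when $O_1\cap O_2=\emptyset$ the decorated sets $O_1\times\{1\}$ and $O_2\times\{1\}$ are already disjoint, so $O_1,O_2$ both with label $1$ witness $(O_1,O_2)\preccurlyeq(O_1\sqcup O_2)$ and produce the reverse inequality.

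Non-triviality is then immediate: if $D$ assumes a value in $(0,\infty)$, say at $[a]$ with $a=(A_1,\dots,A_n)$, then $[a]=\sum_{i=1}^n[(A_i)]$ gives $D([a])=\sum_{i=1}^n\mu_D(A_i)$, a finite positive sum, so $\mu_D(A_{i_0})\in(0,\infty)$ for some $i_0$. For the last assertion I will unfold lower semi-continuity of $D$ at the tuple $(O)$: since $b\leq_\CG(O)$ means $b=(V)$ with $V$ precompact open and $\overline V\subset O$, lower semi-continuity reads $\mu_D(O)=\sup\{\mu_D(V):V\text{ precompact open},\ \overline V\subset O\}$. The inequality $\sup\{\mu_D(K):K\subset O\text{ compact}\}\le\mu_D(O)$ holds automatically because $O$ is an open neighbourhood of every compact $K\subset O$. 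For the reverse, each admissible $V$ satisfies $\mu_D(V)\le\mu_D(\overline V)$ by the monotonicity remark preceding the proposition, and $\overline V$ is a compact subset of $O$; taking the supremum over such $V$ gives $\mu_D(O)\le\sup\{\mu_D(K):K\subset O\text{ compact}\}$, i.e. the outer regular extension of $\mu_D$ is regular. None of these steps presents a genuine obstacle; the only point needing mild attention is the bookkeeping with the level labels in $\{1,\dots,m\}$ from Definition 4.3 when checking subadditivity and additivity in (iv).
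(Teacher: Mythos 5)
Your proposal is correct and follows essentially the same route as the paper's proof: the same definition $\mu_D(O)=D([(O)])$, the same explicit $\prec_\CG$-witnesses (single open subsets of $\GU$ viewed as bisections, and $V$, $V^{-1}$ for axiom (ii)), the same decomposition $[a]=\sum_i[(A_i)]$ for non-triviality, and the same interplay between lower semi-continuity of $D$ at $(O)$ and the inequality $\mu_D(V)\le\mu_D(\overline V)\le\mu_D(O)$ for regularity. No gaps to report.
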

\begin{proof}
	For every open set $O$, define $\mu_D(O)=D([a])$ where $a=(O)$. First $\mu_D(\emptyset)=D(0_{\CW(\CG)})=0$. Let $V$ be an open bisection in $\CG$. Then one has $s(V)\prec_{\CG} r(V)$ as well as $r(V)\prec_{\CG} s(V)$. This implies that $(s(V))\approx (r(V))$ in $\CK(\CG)$ and thus one has $\mu_D(s(V))=\mu_D(r(V))$. In addition, if $O_1\subset O_2$ then $(O_1)\preccurlyeq(O_2)$ holds automatically in $\CK(\CG)$  and thus $\mu_D(O_1)\leq \mu_D(O_2)$.
	Then for any open sets $O_1, \dots, O_n$ in $\GU$, it is not hard to observe that 
	\[(\bigcup_{i=1}^nO_i)\preccurlyeq (O_1, \dots, O_n)\] in $\CK(\CG)$, which implies that 
	\[\mu_D(\bigcup_{i=1}^n O_i)=D([(\bigcup_{i=1}^n O_i)])\leq D([(O_1, \dots, O_n)])=\sum_{i=1}^n D([(O_i)])=\sum_{i=1}^n \mu_D(O_i).\]
	Moreover, if $O_1,\dots O_n$ are pairwise disjoint then one has 	\[(\bigsqcup_{i=1}^nO_i)\approx (O_1, \dots, O_n)\] in $\CK(\CG)$. Then one has the additivity:
	\[\mu_D(\bigsqcup_{i=1}^n O_i)= \sum_{i=1}^n \mu_D(O_i).\]
	This shows that $\mu_D$ is a groupoid dimension function.
	
	Suppose $D$ is non-trivial. Then there is an $a=(A_1, \dots, A_n)\in \CK(\CG)$ such that $0<D([a])<\infty$. Since $D([a])=\sum_{i=1}^nD([(A_i)])$,  there exists at least one $i\leq n$ such that $0<D([(A_i)])<\infty$ and thus one has $0<\mu_D(A_i)<\infty$, which shows that $\mu_D$ is non-trivial.
	
	Now suppose $D$ is lower semi-continuous. For any compact set $F$, define $\mu_D(F)=\inf\{\mu_D(O): F\subset O, O\ \textrm{open}\}$, which provides the outer regular extension of $\mu_D$. Since $D$ is lower semi-continuous, one has 
	\begin{align*}
	\mu_D(O)=D([(O)])&=\sup\{D([(V)]): V\subset \overline{V}\subset O, \overline{V}\ \textrm{compact}\}\\
	&=\sup\{\mu_D(V): V\subset \overline{V}\subset O, \overline{V}\ \textrm{compact}\}.
	\end{align*}
	Then combining the fact $\mu_D(V)\leq \mu_D(\overline{V})\leq \mu_D(O)$, one has 
	\[\mu_D(O)=\sup\{\mu_D(F): F\subset O, F\ \textrm{compact}\},\] which shows that $\mu_D$ is regular.
\end{proof}

For the reverse direction,  every regular groupoid dimension function $\mu$ on $\GU$ induces a map $D_\mu: \CW(\CG) \to [0, \infty]$ by $D_\mu([a])=\sum_{i=1}^n\mu(A_i)$ for $a=(A_1,\dots, A_n)\in \CK(\CG)$. We will show below $D_\mu$ is a lower semi-continuous state on $\CW(\CG)$.

\begin{prop}
	For each regular groupoid dimension function $\mu$, the map $D_\mu$ defined above is a lower semi-continuous state on $\CW(\CG)$.
\end{prop}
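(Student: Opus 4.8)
The plan is to verify directly that $D_\mu$ is well-defined, order-preserving, additive, and lower semi-continuous, with the main work being the order-preserving property, since that is where the comparison relation $\preccurlyeq$ and the defining properties (ii)--(iv) of a groupoid dimension function interact. First I would check well-definedness: if $a \approx b$ then $a \preccurlyeq b$ and $b \preccurlyeq a$, so it suffices to know that $a \preccurlyeq b$ implies $D_\mu([a]) \le D_\mu([b])$; applying this in both directions gives $D_\mu([a]) = D_\mu([b])$, so $D_\mu$ descends to $\CW(\CG)$. Also $D_\mu([(\emptyset)]) = \mu(\emptyset) = 0$ by property (i), so $D_\mu(0_{\CW(\CG)}) = 0$.

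For the order-preserving property, suppose $a = (A_1,\dots,A_n) \preccurlyeq b = (B_1,\dots,B_m)$. Fix $\epsilon > 0$. Using regularity of $\mu$, choose for each $i$ a compact set $F_i \subset A_i$ with $\mu(F_i) > \mu(A_i) - \epsilon/n$ (interpreting this as ``$\mu(F_i)$ large'' when $\mu(A_i) = \infty$). By the definition of $\prec_\CG$ applied to each $F_i$, there are open bisections $W^{(i)}_j$ and indices $k^{(i)}_j \in \{1,\dots,m\}$ with $F_i \subset \bigcup_j s(W^{(i)}_j)$ and
\[
\bigsqcup_{i=1}^n\bigsqcup_{j=1}^{J_i} r(W^{(i)}_j) \times \{k^{(i)}_j\} \subset \bigsqcup_{l=1}^m B_l \times \{l\}.
\]
Then I would estimate: using the outer-regular extension and properties (iii), (iv),
\[
\mu(F_i) \le \mu\Bigl(\bigcup_{j=1}^{J_i} s(W^{(i)}_j)\Bigr) \le \sum_{j=1}^{J_i}\mu(s(W^{(i)}_j)) = \sum_{j=1}^{J_i}\mu(r(W^{(i)}_j)),
\]
the last equality by bisection-invariance (ii). Summing over $i$ and regrouping the double sum according to the value $l = k^{(i)}_j$, the disjointness built into the displayed containment together with the additivity clause of (iv) gives
\[
\sum_{i=1}^n \mu(F_i) \le \sum_{l=1}^m \mu\Bigl(\bigsqcup_{\{(i,j):k^{(i)}_j = l\}} r(W^{(i)}_j)\Bigr) \le \sum_{l=1}^m \mu(B_l) = D_\mu([b]),
\]
where the middle inequality uses that these disjoint pieces all sit inside $B_l$. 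Combining with the choice of $F_i$ yields $D_\mu([a]) - \epsilon \le D_\mu([b])$, and letting $\epsilon \to 0$ gives $D_\mu([a]) \le D_\mu([b])$. Additivity $D_\mu([a] + [b]) = D_\mu([a]) + D_\mu([b])$ is immediate from the definition, since concatenation of tuples just concatenates the sums. I expect the bookkeeping in the regrouping step above — tracking the disjointness across both the $\{i,j\}$ index and the target coordinate $l$ — to be the main technical obstacle, though it is essentially the same argument already carried out in the transitivity proof of Lemma 4.5.

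Finally, for lower semi-continuity, I would show $D_\mu([a]) = \sup\{D_\mu([b]) : b \le_\CG a\}$. The inequality $\ge$ holds because $b \le_\CG a$ implies $b \preccurlyeq a$, hence $D_\mu([b]) \le D_\mu([a])$ by the order-preserving property just established. For $\le$, write $a = (A_1,\dots,A_n)$; by regularity of $\mu$, for each $i$ pick a compact $F_i \subset A_i$ with $\mu(F_i)$ close to $\mu(A_i)$, and since $\GU$ is locally compact Hausdorff choose a precompact open $V_i$ with $F_i \subset V_i \subset \overline{V_i} \subset A_i$. Then $b := (V_1,\dots,V_n) \le_\CG a$, and $\mu(V_i) \ge \mu(F_i)$ is close to $\mu(A_i)$; summing gives $D_\mu([b])$ close to $D_\mu([a])$, which establishes the supremum. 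This completes the proof that $D_\mu$ is a lower semi-continuous state on $\CW(\CG)$.
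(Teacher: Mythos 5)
Your proposal is correct and follows essentially the same route as the paper: regularity reduces the order-preserving estimate to compact subsets of the $A_i$, the definition of $\preccurlyeq$ plus properties (ii)--(iv) of the dimension function give $\sum_i\mu(F_i)\le\sum_l\mu(B_l)$, and lower semi-continuity follows from the same compact-exhaustion argument. The only cosmetic difference is that you run an explicit $\epsilon$-argument on the open sets $r(W^{(i)}_j)$ directly, while the paper takes suprema over tuples $c\leq_\CG a$ and shrinks to precompact bisections with compact closures; both are the same idea.
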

\begin{proof}
	 Suppose $a=(A_1, \dots, A_n)$ and $b=(B_1,\dots, B_m)\in \CK(\CG)$ such that $a\preccurlyeq b$.  Let $c=(C_1,\dots, C_n)\leq_\CG a$.  Since $\overline{C_i}$ is a compact subset of $A_i$ for all $i\leq n$,  there are a collection $\{U^{(i)}_1,\dots, U^{(i)}_{J_i}\}$ of precompact open bisections and  $k_1^{(i)},\dots, k_{J_i}^{(i)}\in
	\{1,\dots, m\}$ such that $\overline{C_i}\subset \bigcup_{j=1}^{J_i}s(U^{(i)}_j)$ and
	\[\bigsqcup_{i=1}^n\bigsqcup_{j=1}^{J_i}r(U^{(i)}_j)\times \{k^{(i)}_j\}\subset \bigsqcup_{l=1}^m B_l\times \{l\}.\]
	Let $\mu$ be a regular groupoid dimension function. Then one has 
	\[\sum_{i=1}^n\mu(\overline{C_i})\leq\sum_{i=1}^n\sum_{j=1}^{J_i}\mu(s(U^{(i)}_j))=\sum_{i=1}^n\sum_{j=1}^{J_i}\mu(r(U^{(i)}_j))\leq \sum_{l=1}^m\mu(B_l).\]  Then since $\mu$ is regular, one has $$\sum_{i=1}^n\mu(A_i)\leq \sum_{l=1}^m\mu(B_l).$$
	This shows that $D_\mu$ is an order preserving map and also well-defined on $\CW(\CG)$. Then the additivity of $D_\mu$ is clear from the definition of $D_\mu$ above and $D_\mu([\emptyset])=0$.
	To show the lower semi-continuity of $D_\mu$, for any $i\leq n$  and a compact set $F_i\subset A_i$ there is a $c=(C_1,\dots, C_n)\leq_\CG a$ such that $F_i\subset C_i\subset \overline{C_i}\subset A_i$. Then one has 
	\[\sum_{i=1}^n\mu(F_i)\leq \sum_{i=1}^n\mu(C_i)\leq \sum_{i=1}^n\mu(A_i).\] Now because $\mu$ is regular, one has
	\[\sum_{i=1}^n\mu(A_i)=\sup\{\sum_{i=1}^n\mu(C_i): c=(C_1,\dots, C_n)\leq_\CG a\}.\]
	This shows that $D_\mu([a])=\sup\{D_\mu([c]): c\in \CK(\CG), c\leq_\CG a\}$ and thus $D_\mu$ is lower semi-continuous.
\end{proof}

We denote by $\operatorname{Lsc}(\CW(\CG))$ the set of all lower semi-continuous states on $\CW(\CG)$.

\begin{thm}
	The map  $S: \operatorname{Lsc}(\CW(\CG))\rightarrow \CD_R(\CG)$ defined by $S(D)=\mu_D$ is an affine bijection.
\end{thm}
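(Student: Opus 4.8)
The plan is to prove bijectivity by exhibiting an explicit two-sided inverse for $S$, namely the map $T: \CD_R(\CG)\to \operatorname{Lsc}(\CW(\CG))$ sending $\mu$ to $D_\mu$. First I would record that both $S$ and $T$ are well-defined between the stated sets: Proposition 4.10 shows that for $D\in \operatorname{Lsc}(\CW(\CG))$ the function $\mu_D$ is a groupoid dimension function whose outer regular extension is regular, so $\mu_D\in \CD_R(\CG)$; and Proposition 4.11 shows that for $\mu\in \CD_R(\CG)$ the map $D_\mu$ is a lower semi-continuous state, so $D_\mu\in \operatorname{Lsc}(\CW(\CG))$.

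Next I would check $S\circ T=\id_{\CD_R(\CG)}$. Fix $\mu\in \CD_R(\CG)$; for any open set $O\subset \GU$, unwinding the two definitions gives $\mu_{D_\mu}(O)=D_\mu([(O)])=\mu(O)$, so $\mu_{D_\mu}$ and $\mu$ agree on $\CO(\GU)$. Since every element of $\CD_R(\CG)$ is recovered on compact sets from its values on open sets via the outer regular extension $K\mapsto \inf\{\mu(O): K\subset O\ \textrm{open}\}$, it follows that $\mu_{D_\mu}=\mu$ in $\CD_R(\CG)$.

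It then remains to check $T\circ S=\id_{\operatorname{Lsc}(\CW(\CG))}$. Fix $D\in \operatorname{Lsc}(\CW(\CG))$ and $a=(A_1,\dots,A_n)\in \CK(\CG)$. By the definition of the addition on $\CW(\CG)$ as concatenation one has $[a]=\sum_{i=1}^n[(A_i)]$, and since $D$ is a state it is additive, whence
\[ D_{\mu_D}([a])=\sum_{i=1}^n\mu_D(A_i)=\sum_{i=1}^n D([(A_i)])=D\Big(\sum_{i=1}^n[(A_i)]\Big)=D([a]). \]
Thus $D_{\mu_D}=D$, and combined with the previous paragraph this shows that $S$ is a bijection with inverse $T$. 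I expect the only delicate point to be the bookkeeping around the outer regular extension: it is precisely the lower semi-continuity hypothesis on $D$, invoked through Proposition 4.10, that guarantees $\mu_D$ genuinely lies in $\CD_R(\CG)$ rather than merely being a groupoid dimension function on open sets, and this is what makes the codomain of $S$ correct.
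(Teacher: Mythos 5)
Your proof is correct and takes essentially the same route as the paper: your verification of $T\circ S=\id$ is exactly the paper's injectivity computation (additivity of $D$ over the concatenation $[a]=\sum_{i}[(A_i)]$ together with $\mu_D(A_i)=D([(A_i)])$), and $S\circ T=\id$ is its surjectivity step $S(D_\mu)=\mu$, with well-definedness supplied by the two propositions preceding the theorem. The only slip is bibliographic: in the paper's numbering those auxiliary results are Propositions 4.9 and 4.10 (Theorem 4.11 is the statement itself), not 4.10 and 4.11.
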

\begin{proof}
	Proposition 4.9 implies that the map $S: D\mapsto\mu_D$ is well-defined. Then we show that $S$ is injective. If $\mu_{D_1}=\mu_{D_2}$ then for any open set $O$ in $\GU$ one has 
	\[D_1([(O)])=\mu_{D_1}(O)=\mu_{D_2}(O)=D_2([(O)]).\] 
	Then for any $a=(O_1, \dots, O_n)$ one has 
	\[D_1([a])=\sum_{i=1}^n D_1([(O_i)])=\sum_{i=1}^n D_2([(O_i)])=D_2([a]).\] To see the surjectivity of $S$ it suffices to observe that $S(D_\mu)=\mu$ for any $\mu\in \CD_R(\CG)$ by Proposition 4.10. 
	
	Finally, let $D=\lambda D_1+(1-\lambda)D_2$, where $0\leq \lambda\leq 1$ and $D, D_1, D_2\in \operatorname{Lsc}(\CW(\CG))$. Then for any open set $O$ in $\GU$, one has 
	\[\mu_D(O)=D([(O)])=\lambda D_1([(O)])+(1-\lambda)D_2([(O)])=\lambda \mu_{D_1}(O)+(1-\lambda)\mu_{D_2}(O).\]
	Then the regularity of $\mu_D, \mu_{D_1}$ and $\mu_{D_2}$ implies that $S$ is affine.
\end{proof}

Then using Proposition 4.1, 4.6, Theorem 4.11.  we are able to characterize when $\CW(\CG)$ is almost unperforated, which is a groupoid version of Theorem 3.9 in \cite{M3}. This can be done via a straightforward generalization of Lemma 3.8 and Theorem 3.9 in \cite{M3}. Therefore, we still omit the proof here. 

\begin{thm}
	Let $\CG$ be a locally compact Hausdorff \'{e}tale groupoid. The following are equivalent.
	\begin{enumerate}[label=(\roman*)]
		\item $\CW(\CG)$ is almost unperforated.
		\item For any $a=(A_1, \dots, A_k)$ and $b=(B_1,\dots, B_l)\in \CK(\CG)$, if $\bigcup_{i=1}^kA_i\subset r(\CG\cdot (\bigcup_{j=1}^lB_j))$ and $\sum_{i=1}^k\mu(A_i)<\sum_{j=1}^l\mu(B_j)$ for all $\mu\in \CD_R(\CG)$ with $\sum_{j=1}^l\mu(B_j)=1$, then  one has \[\bigsqcup_{i=1}^k A_i\times \{i\}\prec \bigsqcup_{j=1}^l B_j\times \{j\}.\]
	\end{enumerate}
\end{thm}

We end this section by the following remark on regular groupoid dimension functions.

\begin{Rmk}
Let $\CG$ be a locally compact Hausdorff \'{e}tale groupoid. It is natural to ask the relation between regular groupoid dimension functions and regular Borel $\CG$-invariant premeasures or measures on $\GU$. It is straightforward to see that any Borel regular $\CG$-invariant measure on $\GU$ is a regular groupoid dimension function. However, it is not known in general whether a regular groupoid dimension function $\mu$ extends to a regular $\CG$-invariant measure. Nevertheless, in the case that $\GU$ is  metrizable, the virtually identical proof of  Lemma 3.6 in \cite{M3} shows that any regular groupoid dimension function can be extended uniquely to a $\CG$-invariant Borel regular premeasure on $\GU$. This proof is routine but quite long. Unlike the setting of compact Hausdorff space in \cite{M3},  our underlying space $\GU$ is locally compact Hausdorff, which means that closed sets are not necessarily compact any more. Nevertheless, all closed sets and open sets are still $\sigma$-compact and so are sets in the set \textit{algebra} $\CA_0$ generated by them when $\GU$ is metrizable. Denote by $K_\sigma$ the collection of all $\sigma$-compact sets in $\GU$. Then replace ``closed sets'' by ``compact sets'' and ``$F_\sigma$'' by ``$K_\sigma$'' respectively in the proof of Lemma 3.6 in \cite{M3}, one would obtain this desired extension result. Then $\mu$ could be uniquely extended to a $\CG$-invariant Borel regular measure on $\GU$ by the classical theorem of Carath\'{e}odory when $\mu$ is finite on all compact sets.  On the other hand, the extension to a Borel measure is also unique for any non-trivial regular groupoid dimension function when $\CG$ is minimal and $\GU$ is compact. In this case, a standard rescaling process allows us to use all probability $\CG$-invariant measures in $M(\CG)$ to determine all regular groupoid dimension functions.
\end{Rmk}

\section{Applications of the groupoid semigroup}
In this section, we use the groupoid semigroup to study pure infiniteness and paradoxical comparison of groupoids. Let $\CG$ be a locally compact Hausdorff \'{e}tale groupoid. First let $O$ be a non-empty open set in $\GU$. Observe that $O\prec_{\CG, 2} O$ is equivalent to the condition $2[(O)]\leq [(O)]$ in $\CW(\CG)$. Therefore,  $\CG$ has paradoxical comparison if and only if $2[(O)]\leq [(O)]$ in $\CW(\CG)$ for any non-empty open set $O$ in $\GU$ if and only if $2[a]\leq [a]$ in $\CW(\CG)$ for any $a\in \CK(\CG)$. This implies that $\CG$ has paradoxical comparison if and only if $\CW(\CG)$ is purely infinite.

\begin{thm}
Let $\CG$ be a locally compact Hausdorff \'{e}tale groupoid. Consider the following properties.
\begin{enumerate}[label=(\roman*)]
	\item $\CG$ has groupoid comparison and $M(\CG)=\emptyset$.
	
	\item $\CG$ is purely infinite.
	
	\item $\CG$ has paradoxical comparison.
	
	\item $\CG$ is weakly purely infinite.
\end{enumerate}
Then (i)$\Rightarrow$(ii)$\Leftrightarrow$(iii)$\Rightarrow$(iv). If $\CG$ is minimal then they are equivalent.
\end{thm}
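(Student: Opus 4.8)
The plan is to route all the nontrivial implications through the groupoid semigroup $\CW(\CG)$, exploiting the observation recorded just before the statement that $\CG$ has paradoxical comparison if and only if $\CW(\CG)$ is purely infinite. The implication (i)$\Rightarrow$(ii) is precisely Lemma~3.10. The implications out of (ii) are routine: for every non-empty open $O\subset\GU$ one has $O\subset r(\CG O)$ (the unit at a point of $O$ witnesses this), so applying pure infiniteness with $O_1=O_2=O$ gives $O\prec_{\CG,d}O$, whence (ii)$\Rightarrow$(iii); and since $F\prec_\CG V$ together with $V\subset O_2$ yields $F\prec_\CG O_2$, the relation $O_1\prec_{\CG,d}O_2$ forces $O_1\prec_\CG O_2$, whence (ii)$\Rightarrow$(iv). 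Together with the substantive implication (iii)$\Rightarrow$(ii) below, this produces the whole chain (i)$\Rightarrow$(ii)$\Leftrightarrow$(iii)$\Rightarrow$(iv).

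For (iii)$\Rightarrow$(ii), assume $\CG$ has paradoxical comparison, so that $\CW(\CG)$ is purely infinite. By Proposition~4.2, $\CW(\CG)$ is then almost unperforated and $S_N(\CW(\CG))=\emptyset$; in particular $\CW(\CG)$ carries no non-trivial lower semi-continuous state, so by the bijection of Theorem~4.11 together with Proposition~4.10 there is no non-trivial regular groupoid dimension function on $\GU$. Since a trivial dimension function only takes the values $0$ and $\infty$, it follows that for no $b=(B_1,\dots,B_l)\in\CK(\CG)$ is there a $\mu\in\CD_R(\CG)$ with $\sum_j\mu(B_j)=1$; hence the measure-theoretic hypothesis in Corollary~4.14(ii) is vacuous. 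Applying Corollary~4.14 (its hypothesis, that $\CW(\CG)$ is almost unperforated, being satisfied) we obtain: for all $a=(A_1,\dots,A_k),b=(B_1,\dots,B_l)\in\CK(\CG)$ with $\bigcup_iA_i\subset r(\CG\bigcup_jB_j)$ one has $a\preccurlyeq b$. Taking $a=(O_1,O_1)$ and $b=(O_2)$, for non-empty open $O_1,O_2$ with $O_1\subset r(\CG O_2)$, gives $2[(O_1)]\leq[(O_2)]$ in $\CW(\CG)$. Unwinding this inequality exactly as in the paragraph preceding the statement --- the two copies of $O_1$ on the left both land in the single copy of $O_2$ on the right, which forces the witnessing bisections for the two copies to have pairwise disjoint ranges --- shows that for every compact $F\subset O_1$ there are disjoint non-empty open $V_1,V_2\subset O_2$ with $F\prec_\CG V_1$ and $F\prec_\CG V_2$ (the disjoint non-empty opens needed in the degenerate instance $F=\emptyset$ are supplied by paradoxical comparison applied to $O_2$). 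Thus $O_1\prec_{\CG,d}O_2$, i.e.\ $\CG$ is purely infinite.

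It remains to prove (iv)$\Rightarrow$(i) when $\CG$ is minimal (discarding the degenerate case in which $\GU$ is a single point). For any non-empty open $O\subset\GU$ the set $r(\CG O)$ is open, non-empty and $\CG$-invariant, hence equals $\GU$ by minimality; so for all non-empty open $U,V$ the hypothesis $U\subset r(\CG V)$ of weak pure infiniteness holds automatically, and (iv) yields $U\prec_\CG V$ for all non-empty open $U,V$. This is exactly the condition identified in the remark following Lemma~3.10 as ``$\CG$ has groupoid comparison and $M(\CG)=\emptyset$'': it trivially implies groupoid comparison, while any $\mu\in M(\CG)$ would satisfy $\mu(U)=\mu(V)$ for all non-empty open $U,V$, contradicting $\mu(\GU)=1$ once one picks two disjoint non-empty open subsets of $\GU$ (which exist since $\GU$ is Hausdorff with at least two points). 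This closes the cycle, so (i)--(iv) are equivalent for minimal $\CG$.

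The step requiring care is (iii)$\Rightarrow$(ii). A direct argument with bisections does not work: the hypothesis $O_1\subset r(\CG O_2)$ gives no uniform bound on how many bisections are needed to cover an arbitrary compact subset of $O_1$ (think of a non-compact $\GU$), and the disjoint-range condition built into $\prec_\CG$ is destroyed when one composes a covering with overlapping ranges against a genuine subequivalence. The groupoid semigroup is precisely what circumvents this: Corollary~4.14 recasts almost unperforation in a form that is already continuous in the compact exhaustion, and the essential input is that pure infiniteness of $\CW(\CG)$ annihilates every non-trivial regular groupoid dimension function, emptying the measure hypothesis of that corollary so that the implication ``$O_1\subset r(\CG O_2)\Rightarrow 2[(O_1)]\leq[(O_2)]$'' becomes unconditional.
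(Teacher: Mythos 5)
Your proposal is correct and follows essentially the paper's own route: (i)$\Rightarrow$(ii) via Lemma~3.10, the trivial implications out of (ii), and the minimality argument for (iv)$\Rightarrow$(i) coincide with the paper, and your treatment of (iii)$\Rightarrow$(ii) rests on the same Section~4 machinery, the only (cosmetic) difference being that you specialize Corollary~4.14 after using Proposition~4.2 together with the state/dimension-function correspondence (Proposition~4.10, Theorem~4.11) to render its measure hypothesis vacuous, whereas the paper re-runs that same unperforation argument by hand for $a=(U,U)$, $b=(V)$ via Propositions~4.1, 4.2 and 4.6. Your explicit handling of the degenerate cases (the empty compact set in $\prec_{\CG,d}$ and a one-point $\GU$) is harmless extra care not present in the paper's proof.
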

\begin{proof}
(i)$\Rightarrow$(ii) has been established by Lemma 3.10. 

(ii)$\Leftrightarrow$(iii). It suffices to show ``$\Leftarrow$'' because the direction  ``$\Rightarrow$'' is trivial. First note that $\CW(\CG)$ is purely infinite since $\CG$ has paradoxical comparison. Then Proposition 4.2 implies that $\CW(\CG)$ is almost unperforated. Now suppose $U, V$ are non-empty open sets in $\GU$ such that $U\subset r(\CG V)$. Define $a=(U, U), b=(V)\in \CK(\CG)$. Now let $c=(O_1, O_2)\leq_\CG a$ in $\CK(\CG)$. Then recall that each $\overline{O_i}$ is compact and satisfies $O_i\subset \overline{O_i}\subset U$. Since $U\subset r(\CG V)$ one has $\overline{O_i}\subset r(\CG V)$ for each $i=1, 2$. Then for any $u\in \overline{O_i}$ there is a $\gamma\in \CG$ such that $s(\gamma)=u$ and $r(\gamma) \in V$. Since $\CG$ is \'{e}tale, there is a precompact open bisection $W_u$ such that $u\in s(W_u)$ and $r(W_u)\subset V$. Then the compactness of $\overline{O_i}$ implies that there are finitely many such precompact open bisections $W^i_1,\dots, W^i_{n_i}$ such that $\overline{O_i}\subset \bigcup_{j=1}^{n_i} s(W^i_j)$ and $\bigcup_{j=1}^{n_i} r(W^i_j)\subset V$. Then for each $i=1, 2$ one has
\[(O_i)\preccurlyeq (\underbrace{V, \dots , V}\limits_{n_i\  \textrm{many}})\] and thus 
\[c=(O_1, O_2)\preccurlyeq (\underbrace{V, \dots , V}\limits_{n\  \textrm{many}}),\] where $n=n_1+n_2$. Therefore $[c]\leq n[b]$ in $\CK(\CG)$. Then since $\CW(\CG)$ is purely infinite, there is no non-trivial state for $\CW(\CG)$ by Proposition 4.2 and thus Proposition 4.1(iii)$\Rightarrow$(i) implies that there is an $m\in \N^+$ such that $(m+1)[c]\leq m[b]$. Because $\CW(\CG)$ is almost unperforated, one has $[c]\leq [b]$, which means $c\preccurlyeq b$ in $\CK(\CG)$. Since $c\leq_\CG a$ is arbitrary, one has $a\preccurlyeq b$ by Proposition 4.6. On the other hand $a\preccurlyeq b$ means $U\prec_{\CG, 2} V$. This shows that $\CG$ is purely infinite.

(iii)$\Rightarrow$(iv). It suffices to show (ii)$\Rightarrow$(iv). This holds because $U\prec_{\CG, 2} V$ automatically implies $U\prec_{\CG} V$ for any non-empty open sets $U, V$ in $\GU$.

(iv)$\Rightarrow$(i) under the assumption of minimality. If $\CG$ is minimal then for any non-empty open sets $U, V$ in $\GU$ one has $U\subset r(\CG V)=\GU$ trivially and thus if $\CG$ is weakly purely infinite then $U\prec_{\CG} V$ holds for any non-empty open sets $U, V$.  This establishes (i) by the remark before Lemma 3.10.
\end{proof}

If the groupoid is not minimal, it is not true in general that the weakly pure infiniteness implies the pure infiniteness. For example, consider the trivial group $\Gamma=\{e\}$ acting on a compact Hausdorff space $X$.  Then the transformation groupoid of this system is weakly purely infinite but not purely infinite. However, every point $x\in X$ in this system is a global fixed point, i.e., $\Gamma\cdot \{x\}=\{x\}$. In the groupoid case, we call a unit $u\in \GU$ a \textit{global fixed} unit if $s(\gamma)=u$ implies $r(\gamma)=u$ for any $\gamma\in \CG$. We conjecture that our weakly pure infiniteness implies the pure infiniteness for locally compact Hausdorff \'{e}tale groupoids having no global fixed unit in the sense that for any $u\in \GU$ there is a $\gamma\in \CG$ such that $s(\gamma)=u$ and $r(\gamma)\neq u$. The following, as a partial evidence,  shows that if a weakly purely infinite locally compact Hausdorff \'{e}tale groupoid $\CG$ has no global fixed unit then it admits no non-trivial regular groupoid dimension function on $\GU$ and thus its groupoid semigroup $\CW(\CG)$ admits no non-trivial lower semi-continuous state. This makes $\CG$ have the flavor of infiniteness

\begin{prop}
	Let $\CG$ be a locally compact Hausdorff \'{e}tale groupoid. Suppose for any $u\in \GU$ there is a $\gamma\in \CG$ such that $s(\gamma)=u$ and $r(\gamma)\neq u$. If $\CG$ is weakly purely infinite then it admits no non-trivial regular groupoid dimension function.
\end{prop}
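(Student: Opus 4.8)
The plan is to argue by contradiction. Assume that $\mu$ is a non-trivial regular groupoid dimension function on $\GU$. The strategy is to build an open bisection $W$ with $s(W)\cap r(W)=\emptyset$ and $0<\mu(s(W))<\infty$; once we have it, weak pure infiniteness will force $2\,\mu(s(W))\le \mu(s(W))$, which is absurd. First I would locate a piece of finite positive mass: non-triviality of $\mu$ gives an open set $O_0\subset\GU$ with $0<\mu(O_0)<\infty$, and regularity of $\mu$ gives a compact set $K\subset O_0$ with $\mu(K)>0$.

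Next I would construct $W$. For each $u\in K$, the hypothesis that $\CG$ has no global fixed unit provides a $\gamma_u\in\CG$ with $s(\gamma_u)=u$ and $r(\gamma_u)\ne u$; since $s(\gamma_u)\ne r(\gamma_u)$, Proposition 2.5 yields an open bisection containing $\gamma_u$ whose source and range are disjoint. Intersecting it with $s^{-1}(O_0)$ (an open set, as $s$ is continuous, and one containing $\gamma_u$ since $s(\gamma_u)=u\in O_0$) produces an open bisection $W_u\ni\gamma_u$ with $s(W_u)\subset O_0$ and $s(W_u)\cap r(W_u)=\emptyset$. The family $\{s(W_u):u\in K\}$ is an open cover of the compact set $K$, so finitely many $s(W_{u_1}),\dots,s(W_{u_m})$ cover $K$; subadditivity of $\mu$ gives $0<\mu(K)\le\sum_{i=1}^m\mu(s(W_{u_i}))$, whence $\mu(s(W_{u_i}))>0$ for some $i$. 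Put $W=W_{u_i}$, $P=s(W)$ and $Q=r(W)$. Then $P$ and $Q$ are non-empty open subsets of $\GU$ with $P\cap Q=\emptyset$, with $0<\mu(P)\le\mu(O_0)<\infty$, and with $\mu(Q)=\mu(P)$ because $\mu$ is a groupoid dimension function and $W$ is a bisection.

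Finally I would run the doubling argument. Each unit $v\in P$ satisfies $s(v)=v\in P$ and $r(v)=v$, so $P\subset r(\CG P)$; and every $\gamma\in W$ has $s(\gamma)\in P$ and $r(\gamma)\in Q$, so $Q\subset r(\CG P)$. Hence $P\cup Q\subset r(\CG P)$, and since $P\cup Q$ and $P$ are non-empty open sets, weak pure infiniteness gives $(P\cup Q)\prec_{\CG}P$. By the monotonicity of regular groupoid dimension functions with respect to $\prec_{\CG}$ recalled above, $\mu(P\cup Q)\le\mu(P)$. On the other hand, disjointness gives $\mu(P\cup Q)=\mu(P)+\mu(Q)=2\mu(P)$, so $2\mu(P)\le\mu(P)$, contradicting $0<\mu(P)<\infty$.

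I do not expect a serious obstacle here; the only point that needs care is the construction of $W$, which must simultaneously have disjoint source and range (so that $\mu(s(W))=\mu(r(W))$ and the two copies genuinely sit in a disjoint union) and have source of finite positive $\mu$-mass. This is exactly why the argument first confines the bisection to $O_0$ via $s^{-1}(O_0)$ and then combines regularity of $\mu$ with a finite subcover of $K$ to isolate a piece of positive mass.
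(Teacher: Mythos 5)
Your argument is correct and follows essentially the same route as the paper's own proof: locate an open set of finite positive mass, use regularity to pass to a compact subset of positive mass, cover it by sources of bisections with disjoint source and range (confined inside the original open set), extract one such bisection $W$ with $0<\mu(s(W))<\infty$ by subadditivity, and then apply weak pure infiniteness to $s(W)\sqcup r(W)\prec_{\CG}s(W)$ to obtain the contradiction $2\mu(s(W))\le\mu(s(W))$. The only cosmetic difference is that you spell out the construction of $W$ via Proposition 2.5 and an intersection with $s^{-1}(O_0)$, which the paper leaves implicit.
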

\begin{proof}
	Suppose there is a non-trivial regular groupoid dimension function $\mu$. Then there is an open set $U$ in $\GU$ such that $0<\mu(U)<\infty$. Then since $\mu$ is regular, there is a compact set $K\subset U$ such that $0<\mu(K)<\infty$. Then for each $u\in K$ there is a $\gamma\in \CG$ such that $u=s(\gamma)\neq r(\gamma)$.  Then there is an open bisection $O_u$ such that $\gamma\in O_u$ with the property $s(O_u)\subset U$ and $s(O_u)\cap r(O_u)=\emptyset$. Note that $s(O_u)$ is an open neighborhood of $u$ and this implies $K\subset \bigcup_{u\in K}s(O_u)$.  Since $K$ is compact, there are finitely many open bisections $O_1,\dots, O_n$ such that $K\subset \bigcup_{i=1}^ns(O_i)\subset U$ and $s(O_i)\cap r(O_i)=\emptyset$ for each $i\leq n$. Then one has 
	\[0<\mu(K)\leq \sum_{i=1}^n\mu(s(O_i))\leq n\mu(U)<\infty.\] This implies that there is an $i\leq n$ such that $0<\mu(s(O_i))<\infty$. Then because $s(O_i)\sqcup r(O_i)\subset r(\CG s(O_i))$, one has $s(O_i)\sqcup r(O_i)\prec_{\CG}  s(O_i)$ since $\CG$ is weakly purely infinite.  Then since $\mu$ is regular, one has $2\mu(s(O_i))=\mu(s(O_i)\sqcup r(O_i))\leq \mu(s(O_i))$. A contradiction to the fact $0<\mu(s(O_i))<\infty$.
\end{proof}

In the rest of this section, we will focus on ample groupoids. First, if the groupoid $\CG$ in Proposition 5.2 is furthermore assumed to be ample then weakly pure infiniteness of $\CG$ indeed implies that $\CG$ has paradoxical comparison and thus confirms the conjecture above in the ample case.

\begin{prop}
	Let $\CG$ be a locally compact ample Hausdorff \'{e}tale groupoid. Suppose for any $u\in \GU$ there is a $\gamma\in \CG$ such that $s(\gamma)=u$ and $r(\gamma)\neq u$. If $\CG$ is weakly purely infinite then $\CG$ has paradoxical comparison.
\end{prop}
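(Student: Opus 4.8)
The plan is to reduce, via Proposition 3.8 together with the observation at the start of this section that $O\prec_{\CG,d}O$ is equivalent to $2[(O)]\leq[(O)]$ in $\CW(\CG)$, to showing that $[(O)]$ is properly infinite in $\CW(\CG)$ for every non-empty \emph{compact open} set $O\subset\GU$. The engine will be the following local doubling principle, already implicit in the proof of Proposition 5.2: if $V$ is a non-empty open set in $\GU$ and $E$ is a non-empty open bisection with $s(E)=V$ and $s(E)\cap r(E)=\emptyset$, then $V\sqcup r(E)\subset r(\CG V)$, so weak pure infiniteness gives $V\sqcup r(E)\prec_{\CG}V$; since $V$ and $r(E)$ are disjoint this says $[(V)]+[(r(E))]\leq[(V)]$ in $\CW(\CG)$, and because $E$ is a bisection one has $(V)\approx(r(E))$, hence $[(r(E))]=[(V)]$, so $2[(V)]\leq[(V)]$. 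Thus any such $V$ generates a properly infinite element of $\CW(\CG)$.

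First I would use the no-global-fixed-unit hypothesis together with the fact recorded in Section~2 (an arrow $\gamma$ with $s(\gamma)\neq r(\gamma)$ lies in an open bisection whose source and range are disjoint) to produce, for each $u\in\GU$, an open bisection $O_u$ with $u\in s(O_u)$ and $s(O_u)\cap r(O_u)=\emptyset$; replacing $O_u$ by $O_u\cap s^{-1}(N)$ for a prescribed open neighbourhood $N$ of $u$ shows one may take $s(O_u)\subset N$ as well. Now fix a non-empty compact open $O$. For each $u\in O$ pick $O_u$ as above with $s(O_u)\subset O$; compactness of $O$ yields finitely many $u_1,\dots,u_n$ with $O=\bigcup_{i=1}^n s(O_{u_i})$. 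Disjointify by setting $W_i=s(O_{u_i})\setminus\bigcup_{j<i}s(O_{u_j})$ (discarding empty ones), so that $O=\bigsqcup_{i=1}^n W_i$ with $W_i$ a non-empty open subset of $s(O_{u_i})$.

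The crucial point is that each piece $W_i$ still carries a bisection with disjoint source and range: put $E_i=(s|_{O_{u_i}})^{-1}(W_i)$, a non-empty open bisection with $s(E_i)=W_i$ and $r(E_i)\subset r(O_{u_i})$, whence $s(E_i)\cap r(E_i)=\emptyset$. Applying the doubling principle with $V=W_i$ and $E=E_i$ shows $[(W_i)]$ is properly infinite. Since $O=\bigsqcup_{i=1}^n W_i$ with the $W_i$ pairwise disjoint, the disjoint-union identity from the proof of Proposition 4.9 gives $[(O)]=\sum_{i=1}^n[(W_i)]$, and a finite sum of properly infinite elements is again properly infinite because $2\sum_{i=1}^n[(W_i)]=\sum_{i=1}^n 2[(W_i)]\leq\sum_{i=1}^n[(W_i)]$. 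Hence $2[(O)]\leq[(O)]$, i.e. $O\prec_{\CG,d}O$, and Proposition 3.8 then yields paradoxical comparison.

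I expect the genuinely load-bearing step to be the passage from ``$[(s(O_{u_i}))]$ is properly infinite'' to ``$[(W_i)]$ is properly infinite'' for the shrunken pieces $W_i\subset s(O_{u_i})$: a subelement of a properly infinite element need not be properly infinite, so one cannot simply restrict the conclusion; instead one must restrict the \emph{witnessing bisection} $O_{u_i}$ down to $W_i$ and rerun the doubling argument there. (It is worth noting in passing that the hypotheses force $\GU$ to be perfect, which is consistent with the above: were $u$ isolated, taking $N=\{u\}$ would make $r(E_u)$ a singleton disjoint from $\{u\}$, and $\{u\}\sqcup r(E_u)\prec_{\CG}\{u\}$ would force a single bisection with singleton range, hence singleton source --- impossible since $r(E_u)\neq\{u\}$.)
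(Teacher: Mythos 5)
Your proposal is, in structure and in every computation, the paper's own proof: reduce to a non-empty compact open set $O$ via Proposition 3.8, cover $O$ by sources of bisections with disjoint source and range (using the no-global-fixed-unit hypothesis and Proposition 2.5), disjointify the sources into pieces partitioning $O$, restrict each witnessing bisection to its piece, apply weak pure infiniteness to each piece to get $2[(W_i)]\leq [(W_i)]$ in $\CW(\CG)$, and sum using $[(O)]=\sum_i[(W_i)]$. Your ``doubling principle'' is exactly the inline computation $2[(s(U_i))]=[(s(U_i),r(U_i))]=[(s(U_i)\sqcup r(U_i))]\leq[(s(U_i))]$ in the paper, and your emphasis on restricting the witnessing bisection to the shrunken piece is precisely the paper's step $U_i=(s|_{O_i})^{-1}(V_i)$.

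The one step that does not work as written is the disjointification. You take the $O_u$ to be merely \emph{open} bisections and then set $W_i=s(O_{u_i})\setminus\bigcup_{j<i}s(O_{u_j})$, asserting that each $W_i$ is open; but a difference of open sets need not be open, so in general these pieces are not legitimate entries of $\CK(\CG)$ and cannot be fed into the relation $\prec_{\CG}$ or into $E_i=(s|_{O_{u_i}})^{-1}(W_i)$. This is exactly the point where ampleness must be invoked a second time (beyond Proposition 3.8): shrink each $O_u$ to a \emph{compact} open bisection containing the chosen arrow $\gamma$, which is possible because the compact open bisections form a basis; then the sources $s(O_{u_j})$ are compact, hence closed, so the set differences $W_i$ are compact open and your argument goes through verbatim. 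This is what the paper does when it chooses compact open bisections $O_u$ and a partition of $P$ into compact open sets $V_i\subset s(O_i)$. With that one-line repair the proof is complete; note that the general (non-ample) case is precisely the conjecture the paper leaves open, so one should expect ampleness to enter at some such point.
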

\begin{proof}
	It suffices to show $P\prec_{\CG, 2} P$ for any non-empty compact open set $P$ by Proposition 3.8.  For any $u\in P$ there is a $\gamma\in \CG$ such that $u=s(\gamma)\neq r(\gamma)$. Then there is a compact open bisection $O_u$ such that $\gamma\in O_u$ and $s(O_u)\cap r(O_u)=\emptyset$. Note that $s(O_u)$ is an open neighborhood of $u$ and thus $P\subset \bigcup_{u\in P}s(O_u)$. Then there is a finite subcollection $\{O_1, \dots, O_n\}$ of $\{O_u: u\in P\}$ such that $P\subset \bigcup_{i=1}^ns(O_i)$ since $P$ is compact. Then there is a family $\{V_1, \dots, V_n\}$ of compact open sets such that $V_i\subset s(O_i)$ for each $i\leq n$ and $P=\bigsqcup_{i=1}^n V_i$ (some of $V_i$ could be empty). Define $U_i=(s|_{O_i})^{-1}(V_i)$ for each $i\leq n$. Then $U_1, \dots, U_n$ are compact open bisections such that $\{s(U_1),\dots, s(U_n)\}$ is disjoint and $P=\bigsqcup_{i=1}^ns(U_i)$ and $s(U_i)\cap r(U_i)=\emptyset$ for each $i\leq n$. Now since $\CG$ is weakly purely infinite, for each $i\leq n$ one has $s(U_i)\sqcup r(U_i)\prec_{\CG} s(U_i)$, which implies that $[(s(U_i)\sqcup r(U_i))]\leq [(s(U_i))]$ in $\CW(\CG)$. This implies that for each $i\leq n$ one has
	\[2[s(U_i)]=[(s(U_i), s(U_i))]=[(s(U_i), r(U_i))]=[(s(U_i)\sqcup r(U_i))]\leq [(s(U_i))].\] Then one has 
	\[2[(P)]=2\sum_{i=1}^n[(s(U_i))]=\sum_{i=1}^n2[(s(U_i))]\leq\sum_{i=1}^n[(s(U_i))]=[(P)].\] This shows that $P\prec_{\CG, 2} P$ and thus $\CG$ has paradoxical comparison.
	\end{proof}

The following shows that  one bisection is enough to describe the subequivalence relation ``$\prec_{\CG}$'' if the groupoid $\CG$ is ample.

\begin{prop}
	Let $\CG$ be a locally compact Hausdorff \'{e}tale ample groupoid. Let $U$ be a non-empty compact open set and $V$ a non-empty open set in $\GU$. Then $U\prec_{\CG} V$ if and only if there is a compact open bisections $O$ such that 
	$U=s(O)$ and $r(O)\subset V$.
\end{prop}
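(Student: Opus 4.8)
The plan is to dispatch the ``if'' direction trivially and then build the bisection $O$ in the ``only if'' direction by cutting $U$ into disjoint compact open pieces adapted to a witnessing family, pulling each piece back, and gluing. For ``if'': if $O$ is a compact open bisection with $U=s(O)$ and $r(O)\subset V$, then for any compact $K\subset U=s(O)$ the single bisection $A_1=O$ witnesses $K\prec_\CG V$ (a one-element family of range sets is vacuously disjoint), so $U\prec_\CG V$.

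For ``only if'', suppose $U\prec_\CG V$. Since $U$ is itself compact, Definition 3.2 applied to $K=U$ yields open bisections $A_1,\dots,A_n$ with $U\subset\bigcup_{i=1}^n s(A_i)$, $\bigcup_{i=1}^n r(A_i)\subset V$, and $\{r(A_1),\dots,r(A_n)\}$ pairwise disjoint. First I would produce, for each $i$, a compact open set $D_i\subset s(A_i)\cap U$ with $U=\bigcup_{i=1}^n D_i$: for each $u\in U$ pick a basic compact open neighbourhood $B_u\subset s(A_{i(u)})\cap U$ (using that $\CG$ is ample), extract a finite subcover of the compact set $U$, and let $D_i$ be the union of those $B_u$ with $i(u)=i$. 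Next, recalling that a compact open subset of the locally compact Hausdorff space $\GU$ is clopen, so compact open sets are closed under relative complements, I would disjointify: $W_1=D_1$ and $W_i=D_i\setminus(D_1\cup\cdots\cup D_{i-1})$ are compact open, pairwise disjoint, $W_i\subset s(A_i)$, and $\bigsqcup_{i=1}^n W_i=\bigcup_{i=1}^n D_i=U$. Since each $A_i$ is an open bisection, $O_i:=(s|_{A_i})^{-1}(W_i)$ is a compact open bisection with $s(O_i)=W_i$ and $r(O_i)\subset r(A_i)$.

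Finally I would set $O=\bigcup_{i=1}^n O_i$ and verify it is a compact open bisection with $s(O)=U$ and $r(O)\subset V$: it is compact open as a finite union of compact open sets; $s(O)=\bigsqcup_{i=1}^n W_i=U$ and $s|_O$ is injective because the $W_i=s(O_i)$ are disjoint and each $s|_{O_i}$ is injective; $r(O)=\bigcup_{i=1}^n r(O_i)\subset\bigcup_{i=1}^n r(A_i)\subset V$ and $r|_O$ is injective because the sets $r(O_i)\subset r(A_i)$ are disjoint and each $r|_{O_i}$ is injective; and since a continuous bijection from a compact space to a Hausdorff space is a homeomorphism, $s|_O$ and $r|_O$ are homeomorphisms onto the open sets $U$ and $r(O)$. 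The only mildly delicate step is the disjointification, which rests on the observation that compact open subsets of $\GU$ form a ring of sets; beyond that the argument is routine bookkeeping, so I do not anticipate a real obstacle.
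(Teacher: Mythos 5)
Your proof is correct and follows essentially the same route as the paper: apply the definition with $K=U$, use ampleness to shrink and disjointify into compact open sets $W_i\subset s(A_i)$ with $U=\bigsqcup_i W_i$, pull back via $(s|_{A_i})^{-1}$, and take the union. The paper simply asserts the existence of the disjoint compact open pieces, whereas you spell out the covering and disjointification and the verification that $O$ is a bisection; these details are accurate.
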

\begin{proof}
	It suffices to show the ``only if'' part because the ``if'' part is trivial. Suppose $U\prec_{\CG} V$. Then there are finitely many open bisections $O_1,\dots, O_n$ in $\CG$ such that 
	$U\subset \bigcup_{i=1}^n s(O_i)$ and $\bigsqcup_{i=1}^nr(O_i)\subset V$. Now since $\CG$ is locally compact ample and $U$ is compact, there are disjoint compact open sets $U_1, \dots, U_n$ such that $U_i\subset s(O_i)$ for each $i\leq n$ and $U=\bigsqcup_{i=1}^nU_i$ (some may be empty). Now define $O=\bigsqcup_{i=1}^n (s|_{O_i})^{-1}(U_i)$, which is a compact open bisection such that $U=s(O)$ and $r(O)\subset \bigsqcup_{i=1}^nr(O_i)\subset V$.
\end{proof}

\begin{cor}
	Let $\CG$ be a locally compact Hausdorff \'{e}tale ample groupoid.  Then $\CG$ is purely infinite in the sense of Definition 3.5 if and only if $\CG$ is purely infinite in the sense of Definition 4.9 in \cite{Matui2} by Matui.
\end{cor}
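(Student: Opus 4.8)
The plan is to chase the statement through the ample-case characterisations already established in this section, reducing it first to paradoxical comparison and then to a condition visibly identical to Matui's. First I would invoke Theorem 5.1: the equivalence (ii)$\Leftrightarrow$(iii) there holds with no extra hypotheses, so $\CG$ is purely infinite in the sense of Definition 3.5 if and only if $\CG$ has paradoxical comparison. Since $\CG$ is ample, Proposition 3.8 then lets me replace paradoxical comparison by the formally weaker condition that $O \prec_{\CG, d} O$ for every non-empty compact open $O \subseteq \GU$. Thus the whole corollary reduces to identifying this last condition with Matui's pure infiniteness.

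For that reduction I would unwind $O \prec_{\CG, d} O$ in terms of compact open bisections. Fix a non-empty compact open $O$. Because $O$ is itself compact, and because $O \prec_{\CG} O_i$ for a compact set $O$ automatically gives $F \prec_{\CG} O_i$ for every compact $F \subseteq O$, the condition $O \prec_{\CG, d} O$ is equivalent to the existence of disjoint non-empty open sets $O_1, O_2 \subseteq O$ with $O \prec_{\CG} O_1$ and $O \prec_{\CG} O_2$. Now Proposition 5.4 applies to each: $O \prec_{\CG} O_i$ is equivalent to the existence of a compact open bisection $B_i$ with $s(B_i) = O$ and $r(B_i) \subseteq O_i$. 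Shrinking $O_i$ to $r(B_i)$, I obtain that $O \prec_{\CG, d} O$ is equivalent to the existence of two compact open bisections $B_1, B_2$ with $s(B_1) = s(B_2) = O$ and $r(B_1), r(B_2)$ disjoint compact open subsets of $O$. In the type semigroup $\CT(\CG)$ of the ample groupoid $\CG$ this says exactly that $[O] + [O] \le [O]$, i.e.\ that the clopen set $O$ is properly infinite. Here I would also remark, as in the proofs of Propositions 5.3 and 5.4, that for compact open sets the ``covering'' formulation of $\prec_{\CG}$ and the ``exact partition'' formulation built into $\CT(\CG)$ coincide, so there is no mismatch with Matui's bookkeeping.

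Finally, Definition 4.9 of \cite{Matui2} says $\CG$ is purely infinite precisely when every non-empty clopen subset of $\GU$ is properly infinite in $\CT(\CG)$, and for an ample groupoid the clopen subsets of $\GU$ are exactly the compact open ones; combining this with the previous two paragraphs yields the asserted equivalence. I expect the only real point requiring care to be the bookkeeping in the middle step: matching our $\prec_{\CG}$-based description of ``$[O]$ is properly infinite'' with the order on Matui's type semigroup, which is exactly where Proposition 5.4 does the work of collapsing finitely many bisections with merely covering sources into a single bisection whose source is $O$ on the nose. The rest is a routine dictionary between Definitions 3.5, 3.6 and the type-semigroup language.
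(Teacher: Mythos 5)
Your proposal is correct and follows essentially the same route as the paper's proof: Theorem 5.1 and Proposition 3.8 reduce Definition 3.5 to the condition $O\prec_{\CG, d}O$ for every non-empty compact open $O$, and Proposition 5.4 converts this into the existence of two compact open bisections with source $O$ and disjoint ranges inside $O$, which is exactly Matui's condition; the type-semigroup rephrasing at the end is only a cosmetic detour. The one small slip is the assertion that clopen subsets of $\GU$ are exactly the compact open ones --- true in Matui's setting, where the unit space is the Cantor set, but not for general ample $\CG$ --- which is harmless here since Matui's Definition 4.9 is formulated for compact unit spaces and the corollary tacitly reads it in terms of compact open sets.
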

\begin{proof}
	Suppose $\CG$ is purely infinite in the sense of Definition 3.5. Then $\CG$ has paradoxical comparison, which implies that $O\prec_{\CG, 2} O$ for any compact open set $O$ in $\GU$. Since $O$ is compact,  there are disjoint non-empty open sets $O_1, O_2\subset O$ such that $O\prec_{\CG} O_1$ and $O\prec_{\CG} O_2$. Then Proposition 5.4 implies that there are compact open bisections $U, V$ such that $O=s(U)=s(V)$ and $r(U)\subset O_1$ and $r(V)\subset O_2$. Thus $r(U)\cap r(V)=\emptyset$ and $r(U)\sqcup r(V)\subset O$. This is exacly the definition of pure infiniteness in the sense of Matui.
	
	For the reverse direction. Matui's pure infiniteness directly implies that $O\prec_{\CG, 2} O$ for any compact open set $O$ in $\GU$. Then Proposition 3.8 implies that $\CG$ has paradoxical comparison and thus is purely infinite in our sense by Theorem 5.1.
\end{proof}

We then discuss the relation among our pure infiniteness, $n$-filling  and local contraction property described in Example 3.15, 3.16, respectively.

\begin{cor}
	Let $\CG$ be a locally compact Hausdorff \'{e}tale minimal ample groupoid on a compact space. Then $\CG$ has groupoid comparison and $M(\CG)=\emptyset$ if and only if $\CG$ is $1$-filling.
\end{cor}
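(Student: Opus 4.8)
The plan is to prove the two implications separately; almost everything needed is already in place. For the direction ``$1$-filling $\Rightarrow$ groupoid comparison and $M(\CG)=\emptyset$'', observe that a $1$-filling groupoid is in particular $n$-filling (in the Rainone--Sims sense) with $n=1$, so the argument of Example 3.16 specialized to $n=1$ applies without change: given a non-empty open $W\subset\GU$, the bisection $E_1$ with $r(E_1W)=\GU$ produces, via $F_1=(s|_{E_1})^{-1}(W)=E_1W$, a single bisection $F_1^{-1}$ with $\GU=s(F_1^{-1})$ and $r(F_1^{-1})\subset W$; since $\GU$ is compact this yields $\GU\prec_{\CG} W$, and hence $U\prec_{\CG} V$ for all non-empty open $U,V\subset\GU$. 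As recorded in the remark following Lemma 3.10, this is exactly the statement that $\CG$ has groupoid comparison and $M(\CG)=\emptyset$.

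For the converse, assume $\CG$ has groupoid comparison and $M(\CG)=\emptyset$. Because $M(\CG)=\emptyset$, the hypothesis defining groupoid comparison is vacuous, so (again by the remark after Lemma 3.10) $U\prec_{\CG} V$ holds for all non-empty open sets $U,V\subset\GU$. Fix a non-empty open set $W\subset\GU$. The unit space $\GU$ is always open in $\CG$ and is compact by assumption, hence a compact open bisection; so $\GU\prec_{\CG} W$, together with ampleness and Proposition 5.4 (applied with $U=\GU$, $V=W$), yields a compact open bisection $O$ with $s(O)=\GU$ and $r(O)\subset W$. Set $E_1=O^{-1}$, a compact open bisection with $s(E_1)=r(O)\subset W$ and $r(E_1)=s(O)=\GU$. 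Since $s(E_1)\subset W$ we have $E_1W=E_1$, whence $r(E_1W)=r(E_1)=\GU$. As $W$ was an arbitrary non-empty open set, $\CG$ is $1$-filling.

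I do not expect a genuine obstacle here: the substance lies in Example 3.16, the remark following Lemma 3.10, and Proposition 5.4. The one point deserving care is the passage between the (superficially symmetric) relation $\prec_{\CG}$ and the asymmetric ``filling'' formulation, which is handled by the inversion trick $E_1=O^{-1}$; note that ampleness enters precisely through Proposition 5.4, which is what lets us replace the finite family of bisections in the definition of $\prec_{\CG}$ by a single bisection, while compactness of $\GU$ is used to license applying Proposition 5.4 with $U=\GU$. Minimality is not actually needed in either implication --- it is automatic from groupoid comparison together with $M(\CG)=\emptyset$ by Lemma 3.10, and it also follows from $1$-filling --- so it is included here only for context.
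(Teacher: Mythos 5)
Your proof is correct and follows essentially the same route as the paper: the converse is obtained exactly as in the paper by applying Proposition 5.4 to $\GU\prec_{\CG} W$ (using compactness of $\GU$) and inverting the resulting compact open bisection, while the forward implication is the $n$-filling argument specialized to $n=1$. The only quibble is a citation slip: that argument is Example 3.15 in the paper's numbering (Example 3.16 is the locally contracting example), though the content you invoke is the right one.
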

\begin{proof}
Let $U$ be a non-empty open set in $\GU$. Now suppose $\CG$ has groupoid comparison and $M(\CG)=\emptyset$. Then one has $\GU\prec_{\CG} U$. Then Proposition 5.4 implies that there is a compact open bisection $O$ such that $s(O)=\GU$ and $r(O)\subset U$. This implies $r(O^{-1}U)=\GU$. Therefore, $\CG$ is $1$-filling. The reverse direction has been established in Example 3.15.
\end{proof}

\begin{cor}
		Let $\CG$ be a locally compact Hausdorff \'{e}tale minimal ample groupoid. Suppose $\CG$ has groupoid comparison and $M(\CG)=\emptyset$. Then $\CG$ is a locally contracting groupoid.
\end{cor}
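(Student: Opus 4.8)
The plan is to produce, for an arbitrary non-empty open set $U\subset\GU$, a single compressing bisection, built out of paradoxical comparison together with Proposition 5.4. First I would exploit ampleness: since $\GU$ has a basis of compact open sets, choose a non-empty compact open set $W$ with $W\subset U$. By Lemma 3.10, the hypotheses (groupoid comparison together with $M(\CG)=\emptyset$) force $\CG$ to be purely infinite, hence to have paradoxical comparison; so $W\prec_{\CG,d}W$. Applying this to the compact set $F=W$ itself (legitimate because $W$ is compact), I obtain disjoint non-empty open sets $O_1,O_2\subset W$ with $W\prec_{\CG}O_1$ and $W\prec_{\CG}O_2$. In particular $O_1$ is a \emph{proper} subset of $W$, since $W\setminus O_1$ contains the non-empty set $O_2$.

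Next I would feed $W\prec_{\CG}O_1$ into Proposition 5.4 (with $U=W$ compact open and $V=O_1$ non-empty open): this yields a compact open bisection $O$ with $s(O)=W$ and $r(O)\subset O_1$. Now set $V:=W$. Because $W$ is a compact subset of the Hausdorff space $\GU$ it is closed, so $\overline{V}=W=s(O)$; in particular $\overline{V}\subset s(O)$. Since $s(O)=\overline{V}$ we have $O\overline{V}=O$, and therefore $r(O\overline{V})=r(O)\subset O_1\subsetneq W=V$. Thus $V$ is an open subset of $U$, $O$ is an open bisection with $\overline{V}\subset s(O)$, and $r(O\overline{V})\subsetneq V$, which is exactly the local contraction condition of Example 3.16 witnessed for $U$. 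As $U$ was arbitrary, $\CG$ is locally contracting.

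The only point that needs genuine care is guaranteeing that the compressed set $O_1$ is a proper subset of $W$; this is the crux of the argument and is precisely where paradoxical comparison (as opposed to mere weak pure infiniteness, or just $U\prec_{\CG}V$ for all open $U,V$) is used, since it automatically supplies two disjoint non-empty open subsets of $W$ — something impossible if $W$ were, say, a single clopen point. Everything else is bookkeeping: verifying that $\overline{V}=W$ via compactness, that $O\overline{V}=O$ for a bisection with $s(O)=\overline V$, and that the resulting data match the definition of "locally contracting" verbatim. I would also note in passing that minimality is not actually needed for this implication beyond what is already packaged into Lemma 3.10.
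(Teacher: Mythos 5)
Your proof is correct and essentially the same as the paper's: both pass to a compact open set $V\subset U$ via ampleness and then apply Proposition 5.4 to convert a comparison relation into a single compact open bisection $O$ with $s(O)=V=\overline{V}$ and $r(O)$ contained in a proper open subset of $V$, which is exactly the local contraction condition. The only divergence is how properness of the target is arranged: the paper simply picks a non-empty open $V_1\subsetneq V$ and uses $V\prec_{\CG}V_1$ (available since groupoid comparison with $M(\CG)=\emptyset$ gives $U\prec_{\CG}V$ for all non-empty open sets), whereas you route through Lemma 3.10 and paradoxical comparison to obtain two disjoint non-empty targets inside $V$ — a harmless variation that also quietly handles the degenerate case where $V$ might lack a proper non-empty open subset.
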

\begin{proof}
	Let $U$ be a non-empty open set in $\GU$ and $V$ a compact open non-empty subset of $U$. Choose another non-empty open set $V_1\subsetneq V$. Now since $\CG$ has groupoid comparison and $M(\CG)=\emptyset$, Proposition 5.4 shows that there is a compact open bisection $O$ such that $V=s(O)$ and $r(O)\subset V_1\subsetneq V$. This shows that $\CG$ is locally contracting.
\end{proof}

To summarize, we have the following result.

\begin{thm}
	Let $\CG$ be a locally compact Hausdorff \'{e}tale minimal ample groupoid on a compact space. Consider the following properties
	\begin{enumerate}[label=(\roman*)]
		\item $\CG$ has groupoid comparison and $M(\CG)=\emptyset$.
		\item $\CG$ is purely infinite (no matter in which sense).
		\item $\CG$ is $n$-filling for some $n\in \N^+$.
		\item $\CG$ is $1$-filling.
		\item $\CG$ is locally contracting.
	\end{enumerate}
Then (i)-(iv) are equivalent and imply (v).
\end{thm}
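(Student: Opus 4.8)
The plan is to assemble the statement from results already in hand; essentially no new argument is needed beyond bookkeeping. First I would establish (i) $\Leftrightarrow$ (ii). By Theorem 5.1, for a minimal groupoid the conditions ``groupoid comparison together with $M(\CG)=\emptyset$'', ``purely infinite'' (Definition 3.5), ``paradoxical comparison'' and ``weakly purely infinite'' all coincide; moreover Corollary 5.5 identifies Definition 3.5 pure infiniteness with Matui's pure infiniteness in the ample case, and the discussion in Section 3 identifies Suzuki's notion with Matui's when $\CG$ is minimal and ample. Hence the parenthetical ``no matter in which sense'' is unambiguous under our hypotheses, and condition (ii) is literally condition (i).

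Next I would handle the filling conditions. Corollary 5.6 gives (i) $\Leftrightarrow$ (iv) directly. The implication (iv) $\Rightarrow$ (iii) is trivial, taking $n=1$. For (iii) $\Rightarrow$ (i) I would run the argument from Example 3.15: given an open set $U\subset\GU$, pick $n$ pairwise disjoint non-empty open subsets $W_1,\dots,W_n$ of $U$, use $n$-filling to obtain open bisections $E_1,\dots,E_n$ with $\bigcup_{i=1}^n r(E_iW_i)=\GU$, set $F_i=(s|_{E_i})^{-1}(W_i)$ so that $\GU=\bigcup_{i=1}^n s(F_i^{-1})$ and $\bigsqcup_{i=1}^n r(F_i^{-1})\subset\bigsqcup_{i=1}^n W_i\subset U$, and conclude $\GU\prec_{\CG}U$; since $\GU$ is compact this yields both groupoid comparison and $M(\CG)=\emptyset$. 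This closes the cycle, so (i)--(iv) are equivalent.

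Finally, for (i) $\Rightarrow$ (v) I would invoke Corollary 5.7: given a non-empty open $U$ in $\GU$, an ample groupoid admits a compact open $V\subset U$; choosing a proper non-empty open $V_1\subsetneq V$, groupoid comparison together with $M(\CG)=\emptyset$ yields $V\prec_{\CG}V_1$, and Proposition 5.4 converts this into a compact open bisection $O$ with $V=s(O)$ and $r(O)\subset V_1\subsetneq V$, which is exactly the local contraction condition.

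I do not anticipate a genuine obstacle: the real content has been proved in Theorem 5.1, Corollaries 5.5--5.7, Proposition 5.4 and Example 3.15. The only point demanding care is to make the phrase ``purely infinite (no matter in which sense)'' precise, namely to verify that the notions of pure infiniteness of Definition 3.5, paradoxical comparison, weak pure infiniteness, and those of Matui and Suzuki all agree for minimal ample groupoids with compact unit space; this is precisely what the cited results supply.
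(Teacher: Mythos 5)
Your proposal is correct and is essentially the paper's own argument: Theorem 5.8 is stated there as a summary, with (i)$\Leftrightarrow$(ii) coming from Theorem 5.1 and Corollary 5.5 (plus the remark identifying Suzuki's notion with Matui's in the minimal ample case), (i)$\Leftrightarrow$(iv) from Corollary 5.6, (iii)$\Rightarrow$(i) from the computation in Example 3.15, and (i)$\Rightarrow$(v) from Corollary 5.7. No meaningful difference from the intended proof.
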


This partially answers a question in \cite{Ra-Sims} asking how the $n$-filling relates to the locally contracting property in the case that groupoid is ample and minimal. 

\begin{Rmk}
	Let $\CG_{\Gamma\curvearrowright X}$ be a transformation groupoid generated by $\Gamma\curvearrowright X$. We remark that $n$-filling of $\CG_{\Gamma\curvearrowright X}$ is not equivalent to the $n$-filling of $\Gamma\curvearrowright X$ in the sense of \cite{J-R} in which a lot of $n$-filling Cantor dynamical systems were presented such that $n$ cannot be one. In fact, a $1$-filling dynamical system is trivial.  However, the transformation groupoid of any of these systems is $1$-filling by Theorem 5.8. This mainly because a bisection in a transformation groupoid could involve arbitrarily many group elements.
\end{Rmk}

Now we turn to the type semigroup of an ample groupoid. The study of the type semigroup dates back to Tarski, who used this algebraic tool to study paradoxical
decompositions. In the context of topological dynamics on totally disconnected spaces, so far many authors have studied this topic, for example,  \cite{D}, \cite{M2}, \cite{R-S} and \cite{Wagon}.  B\"{o}nicke-Li in \cite{B-L}  and Rainone-Sims in \cite{Ra-Sims}, independently, generalized this semigroup to the setting of locally compact Hausdorff \'{e}tale ample groupoids. We briefly recall the definition here. Let $\CG$ be a locally compact Hausdorff \'{e}tale ample groupoid. Denote by $\CC(\CG)=\{a=(A_1,\dots, A_n): A_i\  \text{compact open for all}\ i\leq n, n\in \N\}$. Let $a=(A_1,\dots, A_n),  b=(B_1, \dots, B_m)\in \CC(\CG)$. Define addition on $\CC(\CG)$ by $a+b=(A_1,\dots, A_n, B_1, \dots, B_m)$. Now we define an equivalence relation on $\CC(\CG)$, which is equivalent to its original definition in Section 5 in \cite{B-L}.

\begin{defn}
	Let $\CG$ be a locally compact Hausdorff \'{e}tale ample groupoid. Define a relation $\sim_\CG$ on $\CC(\CG)$ by announcing $a=(A_1,\dots, A_n)\sim_\CG b=(B_1, \dots, B_m)$ if for each $i\leq n$ there are compact open bisections $W^{(i)}_1, \dots, W^{(i)}_{J_i}$ in $\CG$ and integers $k^{(i)}_1, \dots, k^{(i)}_{J_i}\in \{1, \dots, m\}$ such that $A_i=\bigsqcup_{j=1}^{J_i}s(W^{(i)}_j)$ for each $i\leq n$ and 
	\[\bigsqcup_{l=1}^m B_l\times \{l\}=\bigsqcup_{i=1}^n\bigsqcup_{j=1}^{J_i}r(W_j^{(i)})\times \{k^{(i)}_j\}.\]
\end{defn}

It is not hard to verify that the relation $\sim_\CG$ above is an equivalence relation (for example, see \cite{B-L}). Then the type semigroup $\CV(\CG)$ is defined to be $\CV(\CG)=\CC(\CG)/\sim_\CG$ with the addition $[a]+[b]=[a+b]$. In addition equip $\CV(\CG)$ with the algebraic preorder, i.e. $x\leq y$ in $\CV(\CG)$ if $y=x+z$ for some $z\in \CV(\CG)$. Let $\CG$ be a locally compact Hausdorff \'{e}tale ample groupoid. Then there is a natural map $\kappa$ from $\CV(\CG)$ to $\CW(\CG)$ defined by $\kappa: [a]_{\CV(\CG)}\mapsto [a]_{\CW(\CG)}$ for any $a\in \CC(\CG)\subset \CK(\CG)$.  By definition, $\kappa$ preserves the addition operation and neutral elements of the monoids.  We show below $\kappa$ perserves orders and thus $\kappa$ is a preordered commutative monoid morphism from  $\CV(\CG)$ to $\CW(\CG)$. This implies that  $\kappa$ is a groupoid analogue of the natural map from the Murray-von Neumann semigroup to the Cuntz semigroup in the $C^*$-setting (see \cite{A-P-T} for example).

\begin{prop}
	Let $\CG$ be a locally compact Hausdorff \'{e}tale ample groupoid. Let $a, b\in \CC(\CG)$. Then $[a]_{\CV(\CG)}\leq [b]_{\CV(\CG)}$ if and only if $[a]_{\CW(\CG)}\leq [b]_{\CW(\CG)}$.
\end{prop}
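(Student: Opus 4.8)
The plan is to prove both implications separately. The direction $[a]_{\CV(\CG)}\le[b]_{\CV(\CG)}\Rightarrow[a]_{\CW(\CG)}\le[b]_{\CW(\CG)}$ is the assertion that $\kappa$ preserves order, and it is essentially bookkeeping once one observes that $a\sim_\CG b$ implies $a\approx b$; the reverse implication is where the argument has content, and where ampleness is used decisively. I would organize the write-up so that the easy direction comes first and fixes notation.

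For the forward direction, I would first check that $a\sim_\CG b$ implies $a\approx b$ (so that $\kappa$ is well defined). Given compact open bisections $W^{(i)}_j$ and indices $k^{(i)}_j$ witnessing $a\sim_\CG b$, the equalities $A_i=\bigsqcup_j s(W^{(i)}_j)$ and $\bigsqcup_l B_l\times\{l\}=\bigsqcup_{i,j}r(W^{(i)}_j)\times\{k^{(i)}_j\}$ immediately give $a\preccurlyeq b$; applying the same data with the inverse bisections $(W^{(i)}_j)^{-1}$ placed in slot $i$ yields $b\preccurlyeq a$, the disjointness required on the range side being exactly the relation $A_i=\bigsqcup_j s(W^{(i)}_j)$. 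Since $[a]_{\CV(\CG)}\le[b]_{\CV(\CG)}$ means $b\sim_\CG(a,c)$ for some $c\in\CC(\CG)$, we get $[b]_{\CW(\CG)}=[(a,c)]_{\CW(\CG)}=[a]_{\CW(\CG)}+[c]_{\CW(\CG)}\ge[a]_{\CW(\CG)}$ in the ordered monoid $\CW(\CG)$.

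For the converse, assume $[a]_{\CW(\CG)}\le[b]_{\CW(\CG)}$, i.e.\ $a\preccurlyeq b$, where $a=(A_1,\dots,A_n)$, $b=(B_1,\dots,B_m)$ with all $A_i,B_l$ compact open. Because each $A_i$ is compact, I would apply the definition of $\preccurlyeq$ with $F_i=A_i$: there are open bisections $W^{(i)}_1,\dots,W^{(i)}_{J_i}$ and indices $k^{(i)}_j\in\{1,\dots,m\}$ with $A_i\subset\bigcup_j s(W^{(i)}_j)$ and the family $\{r(W^{(i)}_j)\times\{k^{(i)}_j\}\}$ disjoint and contained in $\bigsqcup_l B_l\times\{l\}$. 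Now I would invoke ampleness exactly as in the proofs of Proposition 3.8 and Proposition 5.4: since $A_i$ is a compact open subset of $\GU$ covered by the open sets $s(W^{(i)}_j)$ and $\GU$ has a basis of compact open sets, one can shrink and disjointify to obtain compact open sets $U^{(i)}_j\subset s(W^{(i)}_j)$ with $A_i=\bigsqcup_j U^{(i)}_j$ (some pieces possibly empty). Replacing $W^{(i)}_j$ by the compact open bisection $\widetilde{W}^{(i)}_j=(s|_{W^{(i)}_j})^{-1}(U^{(i)}_j)$ keeps $s(\widetilde{W}^{(i)}_j)=U^{(i)}_j$ and shrinks $r(\widetilde{W}^{(i)}_j)\subset r(W^{(i)}_j)$, so the range family stays disjoint inside $\bigsqcup_l B_l\times\{l\}$.

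The final step is to extract the complementary tuple. For each $l$ set $E_l=\bigcup\{r(\widetilde{W}^{(i)}_j):k^{(i)}_j=l\}$; by disjointness this is a finite disjoint union of compact open sets, hence compact open, with $E_l\subset B_l$. Since $B_l$ and $E_l$ are compact open in the Hausdorff space $\GU$, the set $C_l:=B_l\setminus E_l$ is again compact open. I claim $(A_1,\dots,A_n,C_1,\dots,C_m)\sim_\CG(B_1,\dots,B_m)$: decompose each $A_i$ by the bisections $\widetilde{W}^{(i)}_j$ into slots $k^{(i)}_j$ (noting $\bigsqcup_j s(\widetilde{W}^{(i)}_j)=A_i$), and decompose each $C_l$ by the clopen bisection $C_l\subset\GU$ into slot $l$; restricted to slot $l$ the range side reads $E_l\sqcup C_l=B_l$, and disjointness across all indices and slots is immediate. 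Hence $[b]_{\CV(\CG)}=[a]_{\CV(\CG)}+[(C_1,\dots,C_m)]_{\CV(\CG)}$, so $[a]_{\CV(\CG)}\le[b]_{\CV(\CG)}$. The main obstacle — and the only place where the compactness of the sets in $\CC(\CG)$ and ampleness of $\CG$ genuinely enter — is precisely this passage from the covering $A_i\subset\bigcup_j s(W^{(i)}_j)$ by arbitrary open bisections to an \emph{exact} disjoint compact-open tiling of $A_i$, together with the availability of the complementary remainders $B_l\setminus E_l$.
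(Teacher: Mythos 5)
Your proof is correct and follows essentially the same route as the paper: the easy direction reads the order off the $\sim_\CG$ witnesses (you detour slightly through $\sim_\CG\Rightarrow\approx$ and monoid arithmetic, but the content is identical), and the converse applies the definition of $\preccurlyeq$ with $F_i=A_i$, uses ampleness to shrink the covering bisections to an exact disjoint compact-open tiling of each $A_i$, and forms the remainders $C_l=B_l\setminus E_l$ to witness $a+c\sim_\CG b$, exactly as in the paper.
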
 
\begin{proof}
	Let $a=(A_1, \dots, A_n)$ and $b=(B_1, \dots, B_m)$ be elements in $\CC(\CG)$. Suppose $[a]_{\CV(\CG)}\leq [b]_{\CV(\CG)}$. Then there is a $c=(C_1, \dots, C_l)\in \CC(\CG)$ such that $a+c\sim_\CG b$, where $a+c=(A_1, \dots, A_n, C_1,\dots, C_l)$. Then this implies that if for each $i\leq n$ and every compact set $F_i\subset A_i$ there are compact open bisections $W^{(i)}_1, \dots, W^{(i)}_{J_i}$ in $\CG$ and $k^{(i)}_1, \dots, k^{(i)}_{J_i}\in \{1, \dots, m\}$ such that $F_i\subset A_i=\bigsqcup_{j=1}^{J_i}s(W^{(i)}_j)$ for each $i\leq n$ and 
	\[\bigsqcup_{i=1}^n\bigsqcup_{j=1}^{J_i}r(W_j^{(i)})\times \{k^{(i)}_j\}\subset \bigsqcup_{l=1}^m B_l\times \{l\}.\]
	This shows that $a\preccurlyeq b$ in $\CK(\CG)$ and thus one has  $[a]_{\CW(\CG)}\leq [b]_{\CW(\CG)}$. 
	
	For the reverse direction, suppose $a\preccurlyeq b$ in $\CK(\CG)$. Then for every $i\in\{1,\dots, n\}$, since $A_i$ is compact open, there are a collection of open bisections $U^{(i)}_1,\dots, U^{(i)}_{J_i}$ in $\CG$ and $k_1^{(i)},\dots, k_{J_i}^{(i)}\in \{1,\dots, m\}$ such that
	$A_i\subset \bigcup_{j=1}^{J_i}s(U^{(i)}_j)$ and
	\[\bigsqcup_{i=1}^n\bigsqcup_{j=1}^{J_i}r(U^{(i)}_j)\times \{k^{(i)}_j\}\subset \bigsqcup_{l=1}^m B_l\times \{l\}.\]
	Then since $\CG$ is ample, for each $i\leq n$, there is a disjoint collections $\{W_j^{(i)}\subset U_j^{(i)}: 1\leq j\leq J_i\}$ of compact open bisections such that $A_i=\bigsqcup_{j=1}^{J_i}s(W_j^{(i)})$ and 
	\[\bigsqcup_{i=1}^n\bigsqcup_{j=1}^{J_i}r(W^{(i)}_j)\times \{k^{(i)}_j\}\subset \bigsqcup_{l=1}^m B_l\times \{l\}.\]
	Now for each $l\leq m$ define 
	\[C_l=B_l\setminus \bigsqcup \{r(W^{(i)}_j): 1\leq i\leq n, 1\leq j\leq J_i, k^{(i)}_j=l\},\]
	which is also a compact open set (may be empty). Then define $c=(C_1, \dots, C_m)$, which is an element in $\CC(\CG)$. It is not hard to see $a+c\sim_\CG b$ and thus one has $[a]_{\CV(\CG)}\leq [b]_{\CV(\CG)}$.
	\end{proof}

Let $\CG$ be a locally compact Hausdorff \'{e}tale ample groupoid. By the same construction, any groupoid dimension function $\mu$ on $\GU$ induces a state $T_\mu$ on $\CV(\CG)$ by $T_\mu([a])=\sum_{i=1}^n\mu(A_i)$ for $a=(A_1,\dots, A_n)\in \CC(\CG)$. We say a groupoid dimension function $\mu$ on the unit space $\GU$ of a locally compact Hausdorff \'{e}tale groupoid $\CG$ is \textit{faithful} if $\mu(A)>0$ whenever $A$ is a non-empty open set in $\GU$. If $\CG$  is minimal and $\GU$ is compact then any non-trivial regular Borel $\CG$-invariant measure on $\GU$ is a regular faithful groupoid dimension function. Observe that if $\mu$ is faithful then the induced state $T_\mu$ above is also faithful in the same sense that $T_\mu([a])>0$ whenever $[a]\neq 0_{\CV(\CG)}$. Then, similar to the $C^*$-setting, we have the following result.

\begin{prop}
Let $\CG$ be a locally compact Hausdorff \'{e}tale ample groupoid. Suppose there is a faithful groupoid dimension function $\mu$ on $\GU$. Then the morphism $\kappa: \CV(\CG)\to \CW(\CG)$ is an order preserving embedding.
\end{prop}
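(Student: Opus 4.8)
The plan is to observe that everything except \emph{injectivity} of $\kappa$ is already available, and to extract injectivity from the faithful state on $\CV(\CG)$ produced by $\mu$. Recall from the discussion preceding the statement that $\kappa$ is a morphism of preordered abelian monoids, and that Proposition 5.11 gives $[a]_{\CV(\CG)}\leq [b]_{\CV(\CG)}$ if and only if $[a]_{\CW(\CG)}\leq [b]_{\CW(\CG)}$, so $\kappa$ is already an order embedding in the sense that $x\leq y\Leftrightarrow \kappa(x)\leq\kappa(y)$. Since $\CW(\CG)$ is a partially ordered monoid while $\CV(\CG)$ is a priori only preordered, proving $\kappa$ injective amounts to showing $\CV(\CG)$ is antisymmetric, and this is exactly the step that uses faithfulness of $\mu$.

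So I would start from $a=(A_1,\dots,A_n)$ and $b=(B_1,\dots,B_m)$ in $\CC(\CG)$ with $\kappa([a]_{\CV(\CG)})=\kappa([b]_{\CV(\CG)})$, i.e.\ $[a]_{\CW(\CG)}=[b]_{\CW(\CG)}$, which unwinds to $a\preccurlyeq b$ and $b\preccurlyeq a$ in $\CK(\CG)$. Since all the $A_i$ and $B_l$ are compact open and $\CG$ is ample, I would re-run the argument in the reverse direction of the proof of Proposition 5.11 — an open cover of a compact open set refines to a finite partition by sources of compact open bisections — to produce $c\in\CC(\CG)$ with $a+c\sim_\CG b$ and $d\in\CC(\CG)$ with $b+d\sim_\CG a$. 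Passing to $\CV(\CG)$, these say $[b]_{\CV(\CG)}=[a]_{\CV(\CG)}+[c]_{\CV(\CG)}$ and $[a]_{\CV(\CG)}=[b]_{\CV(\CG)}+[d]_{\CV(\CG)}$, whence
\[[a]_{\CV(\CG)}=[a]_{\CV(\CG)}+[c]_{\CV(\CG)}+[d]_{\CV(\CG)}.\]

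Now I would apply the state $T_\mu$ on $\CV(\CG)$ induced by $\mu$; since $\mu$ is faithful, so is $T_\mu$. Evaluating the displayed identity yields $T_\mu([a]_{\CV(\CG)})=T_\mu([a]_{\CV(\CG)})+T_\mu([c]_{\CV(\CG)})+T_\mu([d]_{\CV(\CG)})$, and when the value $T_\mu([a]_{\CV(\CG)})=\sum_i\mu(A_i)$ is finite this forces $T_\mu([c]_{\CV(\CG)})=T_\mu([d]_{\CV(\CG)})=0$. Faithfulness of $T_\mu$ then gives $[c]_{\CV(\CG)}=[d]_{\CV(\CG)}=0$; concretely, every set appearing in $c$ or in $d$ is empty, because a single compact open set is $\sim_\CG$-equivalent to the empty tuple only if it is itself empty. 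Therefore $b+d\sim_\CG b$, and combined with $b+d\sim_\CG a$ this gives $a\sim_\CG b$, i.e.\ $[a]_{\CV(\CG)}=[b]_{\CV(\CG)}$. Together with Proposition 5.11, this is the desired conclusion.

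The main obstacle is the cancellation step: it goes through cleanly as soon as $T_\mu([a]_{\CV(\CG)})=\sum_i\mu(A_i)$ is finite, which is precisely why the hypothesis is phrased in terms of a faithful \emph{groupoid dimension function} — in the situations one cares about, for instance when $\mu$ arises from a $\CG$-invariant probability measure on a compact unit space, $\mu$ is automatically finite on compact open sets, so there is nothing to check; more generally one should either restrict to that well-behaved situation or supply a localization argument reducing to compact open subsets on which $\mu$ takes finite values. The remaining ingredient, namely the ampleness-and-compactness refinement used to produce $c$ and $d$, is routine and essentially copied from the proof of Proposition 5.11.
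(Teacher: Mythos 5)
Your argument is essentially the paper's proof: from $[a]_{\CW(\CG)}=[b]_{\CW(\CG)}$ you use Proposition 5.11 to get $c,d\in\CC(\CG)$ with $a+c\sim_\CG b$ and $b+d\sim_\CG a$, cancel with the faithful state $T_\mu$ to force $[c]_{\CV(\CG)}=[d]_{\CV(\CG)}=0$, and conclude injectivity, the order-embedding part being exactly Proposition 5.11. The finiteness caveat you flag in the cancellation step is not treated differently in the paper — its proof performs the same cancellation of $T_\mu$-values without comment — so your proposal follows the same route (and is, if anything, slightly more explicit about that point).
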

\begin{proof}
	Let $a ,b\in \CC(\CG)$ such that $[a]_{\CW(\CG)}= [b]_{\CW(\CG)}$. Then Proposition 5.11 implies that there are $c_1, c_2\in \CC(\CG)$ such that  $a+c_1\sim_\CG b$ and $b+c_2\sim_\CG a$. This implies that $a+b+c_1+c_2\sim_\CG a+b$. Now since $\mu$ is faithful, the induced state $T_\mu$ is also faithful. Now one has 
	\[T_\mu([a]_{\CV(\CG)}+[b]_{\CV(\CG)}+[c_1]_{\CV(\CG)}+[c_2]_{\CV(\CG)})=T_\mu([a]_{\CV(\CG)}+[b]_{\CV(\CG)})\] and thus
	$T_\mu([c_1]_{\CV(\CG)})=T_\mu([c_2]_{\CV(\CG)})=0$. This shows $[c_1]_{\CV(\CG)}=[c_2]_{\CV(\CG)}=0_{\CV(\CG)}$ and thus $[a]_{\CV(\CG)}=[b]_{\CV(\CG)}$ because $[a]_{\CV(\CG)}+[c_1]_{\CV(\CG)}=[b]_{\CV(\CG)}$. This entails that $\kappa$ is injective and thus an order preserving embedding.
 \end{proof}

Then we have the following result on almost unperforation of $\CV(\CG)$ and $\CW(\CG)$ for ample groupoids.

\begin{prop}
	Let $\CG$ be a locally compact Hausdorff \'{e}tale ample groupoid. If $\CW(\CG)$ is almost unperforated then so is $\CV(\CG)$.
\end{prop}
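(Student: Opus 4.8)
The plan is to leverage Proposition 5.11, which says that for $a,b\in\CC(\CG)$ one has $[a]_{\CV(\CG)}\leq [b]_{\CV(\CG)}$ if and only if $[a]_{\CW(\CG)}\leq [b]_{\CW(\CG)}$, i.e.\ that $\kappa$ is simultaneously order-preserving and order-reflecting on classes coming from $\CC(\CG)$. Almost unperforation is a statement purely about the order and the additive structure, and the map $\kappa$ respects both, so the hypothesis on $\CW(\CG)$ should transfer directly.

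Concretely, I would fix $a=(A_1,\dots,A_k)$ and $b=(B_1,\dots,B_l)$ in $\CC(\CG)$ and $n\in\N^+$ with $(n+1)[a]_{\CV(\CG)}\leq n[b]_{\CV(\CG)}$, and aim to prove $[a]_{\CV(\CG)}\leq [b]_{\CV(\CG)}$. The first point to record is that the $(n+1)$-fold concatenation of $a$ with itself and the $n$-fold concatenation of $b$ with itself are again tuples of compact open sets, hence elements of $\CC(\CG)$; and since addition in both $\CV(\CG)$ and $\CW(\CG)$ is induced by concatenation of tuples, these concatenations represent $(n+1)[a]$ and $n[b]$ in the respective semigroups. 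Thus the hypothesis says that these two elements of $\CC(\CG)$ satisfy the inequality $(n+1)[a]_{\CV(\CG)}\leq n[b]_{\CV(\CG)}$ in $\CV(\CG)$, and Proposition 5.11 applied to this pair of $\CC(\CG)$-elements yields $(n+1)[a]_{\CW(\CG)}\leq n[b]_{\CW(\CG)}$ in $\CW(\CG)$.

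Now I would invoke the hypothesis that $\CW(\CG)$ is almost unperforated to conclude $[a]_{\CW(\CG)}\leq [b]_{\CW(\CG)}$, and then apply the ``only if''-direction of Proposition 5.11 (to the original pair $a,b\in\CC(\CG)$) to get back $[a]_{\CV(\CG)}\leq [b]_{\CV(\CG)}$. This is precisely almost unperforation of $\CV(\CG)$.

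There is no real obstacle here: the argument is a short transfer along $\kappa$, and the only step needing a word of justification is that the multiples $(n+1)[a]$ and $n[b]$ lie in the image of $\CC(\CG)$ so that Proposition 5.11 applies to them, which is immediate since $\CC(\CG)$ is closed under concatenation. (One may also phrase this, if desired, by noting that $\kappa$ is an injective preordered-monoid morphism whose image is an order-ideal in the sense needed, but the bare application of Proposition 5.11 suffices.)
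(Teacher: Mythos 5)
Your argument is correct and follows essentially the same route as the paper: transfer the inequality $(n+1)[a]\leq n[b]$ from $\CV(\CG)$ to $\CW(\CG)$ via Proposition 5.11, apply almost unperforation there, and transfer $[a]\leq[b]$ back via Proposition 5.11 again. Your extra remark that the multiples are represented by concatenations lying in $\CC(\CG)$ is a harmless justification the paper leaves implicit.
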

\begin{proof}
Let $[a]_{\CV(\CG)}, [b]_{\CV(\CG)}\in \CV(\CG)$ be such that $(n+1)[a]_{\CV(\CG)}\leq n[b]_{\CV(\CG)}$ for some $n\in \N^+$. Then Proposition 5.11 implies $(n+1)[a]_{\CW(\CG)}\leq n[b]_{\CW(\CG)}$ and thus one has $[a]_{\CW(\CG)}\leq [b]_{\CW(\CG)}$ because $\CW(\CG)$ is almost unperforated. Then Proposition 5.11 again implies $[a]_{\CV(\CG)}\leq [b]_{\CV(\CG)}$. This shows that $\CV(\CG)$ is almost unperforated.
\end{proof}

\begin{prop}
Let $\CG$ be a locally compact Hausdorff \'{e}tale ample groupoid. Then $\CW(\CG)$ is purely infinite if and only if $\CV(\CG)$ is purely infinite.
\end{prop}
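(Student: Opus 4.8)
The plan is to prove both implications directly, exploiting the fact (established just before Proposition 5.11) that $\kappa\colon\CV(\CG)\to\CW(\CG)$ is a preordered abelian monoid morphism which is surjective onto the sub-monoid generated by compact open sets, together with the observation from the start of Section 5 that $\CG$ has paradoxical comparison if and only if $\CW(\CG)$ is purely infinite, and Proposition 3.8 characterizing paradoxical comparison for ample groupoids via compact open sets. First I would handle the easy direction: suppose $\CV(\CG)$ is purely infinite. Let $O$ be a non-empty open set in $\GU$ and $F\subset O$ a compact set; since $\CG$ is ample there is a compact open $P$ with $F\subset P\subset O$. Then $[P]_{\CV(\CG)}$ is properly infinite, i.e. $2[P]_{\CV(\CG)}\le[P]_{\CV(\CG)}$, and applying $\kappa$ gives $2[(P)]_{\CW(\CG)}\le[(P)]_{\CW(\CG)}$, which means $P\prec_{\CG,d}P$, hence $F\prec_{\CG}O_1$ and $F\prec_{\CG}O_2$ for disjoint $O_1,O_2\subset P\subset O$. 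This establishes $O\prec_{\CG,d}O$, so $\CG$ has paradoxical comparison, and by Theorem 5.1 (or directly) $\CW(\CG)$ is purely infinite.

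For the converse, suppose $\CW(\CG)$ is purely infinite. By Proposition 5.14, $\CV(\CG)$ is almost unperforated; by Proposition 4.2 it remains to show $S_N(\CV(\CG))=\emptyset$, and then $\CV(\CG)$ is purely infinite by Proposition 4.2 again. So suppose toward a contradiction that $T$ is a non-trivial state on $\CV(\CG)$, say $0<T([a]_{\CV(\CG)})<\infty$ for some $a=(A_1,\dots,A_n)\in\CC(\CG)$; then some $A_i=:A$ satisfies $0<T([(A)]_{\CV(\CG)})<\infty$. The goal is to contradict pure infiniteness of $\CW(\CG)$. Since $\CW(\CG)$ is purely infinite, $2[(A)]_{\CW(\CG)}\le[(A)]_{\CW(\CG)}$, so $A\prec_{\CG,d}A$; in particular (using Proposition 5.4 and ampleness) $A=s(U_1)=s(U_2)$ for compact open bisections with $r(U_1)\sqcup r(U_2)\subset A$, hence $2[(A)]_{\CV(\CG)}\le[(A)]_{\CV(\CG)}$ in $\CV(\CG)$ already. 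But then $2T([(A)]_{\CV(\CG)})\le T([(A)]_{\CV(\CG)})$, contradicting $0<T([(A)]_{\CV(\CG)})<\infty$. Thus $S_N(\CV(\CG))=\emptyset$ and $\CV(\CG)$ is purely infinite.

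Actually the converse direction collapses further: the argument above shows directly that if $\CW(\CG)$ is purely infinite then every $[(A)]_{\CV(\CG)}$ with $A$ compact open is properly infinite in $\CV(\CG)$, and since such classes generate $\CV(\CG)$ as a monoid and proper infiniteness passes to sums ($2(x+y)=2x+2y\le x+y$), every element of $\CV(\CG)$ is properly infinite, i.e. $\CV(\CG)$ is purely infinite. So I would actually bypass Propositions 5.14 and 4.2 entirely and give the short self-contained argument: pure infiniteness of $\CW(\CG)$ $\Leftrightarrow$ $P\prec_{\CG,d}P$ for all compact open $P$ (Proposition 3.8, Theorem 5.1) $\Leftrightarrow$ $2[(P)]_{\CV(\CG)}\le[(P)]_{\CV(\CG)}$ for all compact open $P$ (Proposition 5.4 plus the definition of $\sim_\CG$) $\Leftrightarrow$ $\CV(\CG)$ purely infinite. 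The main obstacle is the bookkeeping in the middle equivalence: translating $P\prec_{\CG,d}P$, which a priori only gives open sets $O_1,O_2\subset P$ with $P\prec_\CG O_i$, into an honest relation $2(P)\sim_\CG$ (something $\le(P)$) in $\CC(\CG)$ using compact open bisections — this is exactly where Proposition 5.4 (one bisection suffices) is needed, to replace the covering bisections in the definition of $\prec_\CG$ by a single compact open bisection $U_i$ with $s(U_i)=P$ and $r(U_i)\subset O_i$, after which the complement $P\setminus(r(U_1)\sqcup r(U_2))$ serves as the witness $z$ with $2(P)+z\sim_\CG(P)$.
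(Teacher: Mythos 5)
Your proposal is correct. For the implication ``$\CV(\CG)$ purely infinite $\Rightarrow$ $\CW(\CG)$ purely infinite'' you argue exactly as the paper does: transfer $2[(P)]\leq[(P)]$ along $\kappa$ (Proposition 5.11) for a compact open $P$ squeezed between a compact set and an open set, and conclude paradoxical comparison as in Proposition 3.8 and the observation before Theorem 5.1. For the converse the paper is a one-liner: since $2[a]_{\CW(\CG)}\leq[a]_{\CW(\CG)}$ for every $a\in\CC(\CG)$, Proposition 5.11 (coincidence of the two orders on classes of tuples of compact open sets) immediately yields $2[a]_{\CV(\CG)}\leq[a]_{\CV(\CG)}$. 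You instead re-derive the needed special case by hand: from $A\prec_{\CG,d}A$ you extract, via Proposition 5.4, compact open bisections $U_1,U_2$ with $s(U_i)=A$ and $r(U_1)\sqcup r(U_2)\subset A$, take the compact open complement $Z=A\setminus(r(U_1)\sqcup r(U_2))$ as the witness for $(A,A,Z)\sim_\CG(A)$, and then use that proper infiniteness of the generators $[(A)]_{\CV(\CG)}$ passes to finite sums in the algebraic order. This is valid and essentially reproduces, for this special case, the proof of the reverse direction of Proposition 5.11 (shrink the covering bisections over a compact open source to a disjoint partition and use the leftover piece of the target as the complement), so your argument is self-contained but longer, while the paper's appeal to 5.11 is the shortcut. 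Your discarded first attempt via states is also sound in substance, except that the almost-unperforation transfer you cite as ``Proposition 5.14'' is Proposition 5.13 (citing the proposition being proved would be circular); that slip is moot once you pass to your streamlined final argument.
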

\begin{proof}
	Proposition 5.11 implies that if $\CW(\CG)$ is purely infinite then so is $\CV(\CG)$. For the reverse direction, suppose $\CV(\CG)$ is purely infinite. Then for any compact open set $O$ one has $2[(O)]_{\CV(\CG)}\leq [(O)]_{\CV(\CG)}$. This implies that $2[(O)]_{\CW(\CG)}\leq [(O)]_{\CW(\CG)}$ by Proposition 5.11 again. Therefore one has $O\prec_{\CG, 2} O$. Then Proposition 3.8 shows that $\CG$ has paradoxical comparison and therefore $\CW(\CG)$ is purely infinite by the observation in the paragraph before Theorem 5.1. 
\end{proof}

Now we have the following result for ample groupoids.

\begin{thm}
	Let $\CG$ be a locally compact Hausdorff \'{e}tale ample groupoid.  Consider the following conditions.
	\begin{enumerate}[label=(\roman*)]
		\item $\CG$ is purely infinite.
		
		\item $\CG$ has paradoxical comparison.
		
		\item Every clopen set  in $\GU$ is $(2, 1)$-paradoxical in the sense of \cite{B-L}.
		
		\item $\CW(\CG)$ is purely infinite.
		
		\item $\CV(\CG)$ is purely infinite.
		
		\item $\CW(\CG)$ is almost unperforated and there is no non-trivial state on $\CW(\CG)$.
		
	\item $\CV(\CG)$ is almost unperforated and there is no non-trivial state on $\CV(\CG)$.
	
	\item $\CG$ is weakly purely infinite.
	\end{enumerate}
Then (i)-(vii) are equivalent in general. If $\CG$ is furthermore assumed to have no global fixed unit, then all conditions above are equivalent to (viii).
\end{thm}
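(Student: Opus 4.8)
The plan is to read the whole cycle of equivalences off the results already established, arranging the citations so that no circular reasoning occurs. I would begin with (i)--(vii), which need no extra hypothesis on $\CG$. The equivalence (i)$\Leftrightarrow$(ii) is exactly Theorem 5.1, where pure infiniteness and paradoxical comparison are shown to coincide for every locally compact Hausdorff \'{e}tale groupoid. For (ii)$\Leftrightarrow$(iii), since $\CG$ is ample, Proposition 3.8 reduces paradoxical comparison to the requirement that $O \prec_{\CG, d} O$ for every non-empty compact open $O \subset \GU$, and the remark immediately following Proposition 3.8 identifies this with the assertion that every compact open subset of $\GU$ is $(2,1)$-paradoxical in the sense of \cite{B-L}; here I would phrase condition (iii) in terms of compact open sets, since $\GU$ need not be compact and the \cite{B-L} framework is set up for compact open sets. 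Finally, (ii)$\Leftrightarrow$(iv) is the observation recorded just before Theorem 5.1, namely that $O \prec_{\CG, d} O$ amounts to $2[(O)] \le [(O)]$ in $\CW(\CG)$, so that paradoxical comparison is precisely pure infiniteness of the monoid $\CW(\CG)$.

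Next I would link $\CW(\CG)$, $\CV(\CG)$ and their structural descriptions. Proposition 5.15 gives (iv)$\Leftrightarrow$(v) directly. Applying Proposition 4.2 --- a preordered abelian monoid is purely infinite if and only if it is almost unperforated and admits no non-trivial state --- to $\CW(\CG)$ yields (iv)$\Leftrightarrow$(vi), and applying it to $\CV(\CG)$ yields (v)$\Leftrightarrow$(vii); both $\CW(\CG)$ and $\CV(\CG)$ are preordered abelian monoids by the constructions of Sections 4 and 5, so this is legitimate. At this point (i)--(vii) are pairwise equivalent, and one may alternatively route between the almost-unperforation conditions via Proposition 5.14 if desired.

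Finally, assume in addition that $\CG$ has no global fixed unit, and adjoin (viii). The direction (ii)$\Rightarrow$(viii) needs nothing new, since Theorem 5.1 already records that paradoxical comparison implies weak pure infiniteness (and that implication uses no minimality or other hypothesis). The reverse direction (viii)$\Rightarrow$(ii) is precisely Proposition 5.3, whose hypotheses are exactly that $\CG$ be ample and have no global fixed unit. Chaining this with the equivalences already obtained completes the proof. I do not expect a genuine obstacle here: all the substantive work was done in the preceding results (Theorem 5.1 and Propositions 3.8, 4.2, 5.3, 5.15), and Theorem 5.16 is essentially a bookkeeping assembly of them; the only point demanding a little care is the compact-open-versus-clopen wording of (iii) when $\GU$ fails to be compact.
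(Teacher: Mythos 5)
Your proposal is correct and follows essentially the same route as the paper, which proves this theorem precisely by assembling Remark 3.9, Proposition 4.2, Theorem 5.1, Proposition 5.3 and Proposition 5.14 (your references to ``Proposition 5.15'' and ``Theorem 5.16'' are an off-by-one labelling slip for Proposition 5.14 and Theorem 5.15, but the content you invoke is the right one). Your remark that (iii) should be read as ``compact open'' rather than ``clopen'' when $\GU$ is noncompact is also consistent with how the paper uses Remark 3.9.
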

\begin{proof}
	This is a direct application of Remark 3.9,  Proposition 4.2, Theorem 5.1, Proposition 5.3 and Proposition 5.14.
\end{proof}

\begin{Rmk}
	Most recently, Ara, B\"{o}nicke, Bosa and Li posted a paper \cite{A-B-B-L} in which the equivalence of (v) and (viii) above has also been established by an algebraic argument under the assumption that the ample groupoid $\CG$ above is second countable and there is no non-trivial $\CG$-invariant Borel measure on $\GU$ (see Proposition 2.11 in \cite{A-B-B-L}). Note that when there is no non-trivial $\CG$-invariant Borel measure on $\GU$, the dynamical comparison they used there is exactly same to our weakly pure infiniteness. See Definition 2.1 in \cite{A-B-B-L} and also Definition 1.3 in \cite{M3}. Observe that any global fixed unit $u$ induces a probability Borel $\CG$-invariant measure $\mu=\delta_u$, i.e., the Dirac measure at $u$. Therefore, their assumption that $\CG$ has no non-trivial $\CG$-invariant Borel measures on $\GU$ implies our hypothesis in Theorem 5.15 that $\CG$ has no global fixed unit.  In addition, Theorem 5.15 shows that this equivalence actually also holds for locally compact Hausdorff \'{e}tale ample groupoids that are not second countable.
\end{Rmk}

We end this section by a further analysis of Example 3.28. The following deep result was first established by Tarski and recorded as Theorem 5.1 in \cite{B-L}.

\begin{thm}
		Let $S$ be a preordered commutative monoid equipped with the algebraic preorder $\leq$ and let $x\in S$. Then the following are equivalent.
		\begin{enumerate}[label=(\roman*)]
			\item $(n+1)x\nleq nx$ for any $n\in \N$.
			\item There exists a state $f: S\rightarrow [0, \infty]$ with $f(x)=1$.
		\end{enumerate} 
\end{thm}

\begin{Rmk}
In Example 3.28, one can choose an action $\alpha: \Gamma \curvearrowright X$ such that $\alpha$ is minimal purely infinite and $X$ is the Cantor set to begin with. Theorem 5.1 implies that $M_\Gamma(X)=\emptyset$. There are a lot of such actions provided in \cite{J-R}. Then the underlying space of the extension system $\alpha\times \beta: \Gamma \curvearrowright X\times \Gamma^*$ is still zero-dimensional. First there is no finite $\Gamma$-invariant measure on $X\times \Gamma^*$. Suppose the contrary, the push forward measure on $X$ is a finite $\Gamma$-invariant measure on $X$, which is a contradiction to the fact $M_\Gamma(X)=\emptyset$. Nevertheless, there are still  many non-trivial infinite $\Gamma$-invariant measures on $X\times \Gamma^*$, which implies that the extension $\alpha\times \beta$ still has a flavor of finiteness. Indeed, consider the type semigroup $\CV(\alpha\times \beta)$, which is a preordered commutative monoid equipped with the algebraic order mentioned above. Then for any non-empty compact open set $O\times \{h\}$ in $X\times \Gamma$, since the order in the type semigroup $\CV(\alpha\times \beta)$ coincides with the order in the groupoid semigroup $\CW(\alpha\times \beta)$ established in Proposition 5.11, the same argument in Example 3.28 actually shows that $(n+1)[(O\times \{h\})]\nleq n[(O\times\{h\})]$ in $\CV(\alpha\times \beta)$ for any $n\in \N$. Then Theorem 5.17 implies that  there is a non-trivial state $f\in S(\CV, [(O\times \{h\})])$. In addition, Lemma 5.1 in \cite{R-S} shows that $f$ induces a non-trivial $\Gamma$-invariant measure $\mu: X\times \Gamma^*\rightarrow [0, \infty]$ with $\mu(O\times \{h\})=1$.  This fact naturally leads to the following question.
\end{Rmk}

\begin{ques}
 For any purely infinite dynamical system $\alpha: \Gamma \curvearrowright X$, is there an extension $\beta: \Gamma \curvearrowright Y$ of $\alpha$ such that $\beta$ is not purely infinite and has no $\Gamma$-invariant non-trivial measure? 
\end{ques}

\section{$C^*$-algebras arising from the purely infinite groupoids}
In this section, we study the $C^*$-algebras of  locally compact Hausdorff \'{e}tale groupoids. The following shows that our subequivalence relation ``$\preccurlyeq$'' on $\CK(\CG)$ naturally relates to the Cuntz subequivalence relations on functions in $C_0(\GU)$. Denote by $\diag(a_1, \dots, a_n)$ the diagonal matrix whose  entries on diagonal are $a_1, \dots, a_n$. The following is a groupoid version of Proposition 2.3 in \cite{M3}, which can be established by a virtually identical proof. Therefore, we omit its proof.

\begin{prop}
	Let $(f_1,\dots, f_n)$ and $(g_1,\dots, g_m)$ be two sequences of functions in $C_0(\GU)_+$. Write $A_i=\supp(f_i)$ and $B_l=\supp(g_l)$ for each $i\leq n$ and $l\leq m$. Denoted by $a=(A_1, \dots, A_n)$ and $b=(B_1, \dots, B_m)$. If $a\preccurlyeq b$ in $\CK(\CG)$ then $\diag(f_1,\dots, f_n)\precsim\diag(g_1,\dots, g_m)$
	in the $C^\ast$-algebra $C^*_r(\CG)$.
\end{prop}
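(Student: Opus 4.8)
The plan is to reduce to compactly supported cuts and then transport supports along the open bisections witnessing $a\preccurlyeq b$, combining a handful of standard facts about Cuntz comparison (see \cite{A-P-T}, \cite{NCP}): $\precsim$ is transitive and additive with respect to $\oplus$; $x_1+\dots+x_k\precsim x_1\oplus\dots\oplus x_k$ for positive $x_j$, with $\sim$ when the $x_j$ are pairwise orthogonal; for $\varphi,\psi\in C_0(\GU)_+$ one has $\varphi\precsim\psi$ whenever $\supp(\varphi)\subset\supp(\psi)$; and $x\precsim y$ holds once $(x-\eps)_+\precsim y$ for all $\eps>0$. Since $\bigl(\diag(f_1,\dots,f_n)-\eps\bigr)_+=\diag\bigl((f_1-\eps)_+,\dots,(f_n-\eps)_+\bigr)$, it suffices to fix $\eps>0$ and prove $\diag\bigl((f_1-\eps)_+,\dots,(f_n-\eps)_+\bigr)\precsim\diag(g_1,\dots,g_m)$.

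First I would apply the hypothesis. Put $F_i=\{u\in\GU:f_i(u)\ge\eps\}$, a compact subset of $A_i$. Since $a\preccurlyeq b$, for each $i$ there are open bisections $W^{(i)}_1,\dots,W^{(i)}_{J_i}$ and indices $k^{(i)}_j\in\{1,\dots,m\}$ with $F_i\subset\bigcup_j s(W^{(i)}_j)$ and with the sets $r(W^{(i)}_j)\times\{k^{(i)}_j\}$, as $(i,j)$ varies, pairwise disjoint and contained in $\bigsqcup_l B_l\times\{l\}$; in particular $r(W^{(i)}_j)\subset B_{k^{(i)}_j}$, and for each fixed $l$ the sets $\{r(W^{(i)}_j):k^{(i)}_j=l\}$ are pairwise disjoint. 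For each $i$ choose a partition of unity $e^{(i)}_1,\dots,e^{(i)}_{J_i}\in C_c(\GU)_+$ with $\supp(e^{(i)}_j)$ compact, $\supp(e^{(i)}_j)\subset s(W^{(i)}_j)$, and $\sum_j e^{(i)}_j\equiv 1$ on $F_i$. Then $(f_i-\eps)_+=\sum_{j=1}^{J_i}p^{(i)}_j$ with $p^{(i)}_j:=(f_i-\eps)_+\,e^{(i)}_j\in C_c(\GU)_+$ and $\overline{\supp(p^{(i)}_j)}$ a compact subset of $s(W^{(i)}_j)$.

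Next I would transport each $p^{(i)}_j$ across its bisection: pulling $(p^{(i)}_j)^{1/2}$ back through the homeomorphism $s|_{W^{(i)}_j}$ and extending by zero produces $\eta^{(i)}_j\in C_c(\CG)_+$ with $\overline{\supp(\eta^{(i)}_j)}\subset W^{(i)}_j$ and $s(\eta^{(i)}_j)=(p^{(i)}_j)^{1/2}$; set $q^{(i)}_j:=r(\eta^{(i)}_j)^2\in C_c(\GU)_+$, so $\supp(q^{(i)}_j)\subset r(W^{(i)}_j)\subset B_{k^{(i)}_j}$. Using that $s(\eta)\sim r(\eta)$ in $C^*_r(\CG)$ whenever $\eta\in C_c(\CG)_+$ has $\overline{\supp(\eta)}$ inside an open bisection, together with $z\sim z^{1/2}$ for positive $z$ (both recorded in Section~2), we get $p^{(i)}_j=s(\eta^{(i)}_j)^2\sim s(\eta^{(i)}_j)\sim r(\eta^{(i)}_j)\sim r(\eta^{(i)}_j)^2=q^{(i)}_j$. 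Since each $(f_i-\eps)_+$ is a finite sum of the positive elements $p^{(i)}_j$,
\[
\diag\bigl((f_i-\eps)_+:i\bigr)\ \precsim\ \bigoplus_{i=1}^{n}\bigoplus_{j=1}^{J_i}p^{(i)}_j\ \sim\ \bigoplus_{i=1}^{n}\bigoplus_{j=1}^{J_i}q^{(i)}_j .
\]

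Finally I would regroup by the labels. For fixed $l$ the functions $q^{(i)}_j$ with $k^{(i)}_j=l$ have pairwise disjoint supports (since $\supp(q^{(i)}_j)\subset r(W^{(i)}_j)$ and these are disjoint within the label class), so $\bigoplus_{\{(i,j):k^{(i)}_j=l\}}q^{(i)}_j\sim Q_l$, where $Q_l:=\sum_{\{(i,j):k^{(i)}_j=l\}}q^{(i)}_j\in C_c(\GU)_+$ satisfies $\supp(Q_l)\subset B_l=\supp(g_l)$ and hence $Q_l\precsim g_l$. Summing over $l$,
\[
\bigoplus_{i,j}q^{(i)}_j\ =\ \bigoplus_{l=1}^{m}\ \bigoplus_{\{(i,j):k^{(i)}_j=l\}}q^{(i)}_j\ \sim\ \bigoplus_{l=1}^{m}Q_l\ \precsim\ \bigoplus_{l=1}^{m}g_l\ =\ \diag(g_1,\dots,g_m),
\]
so transitivity gives $\diag\bigl((f_i-\eps)_+:i\bigr)\precsim\diag(g_1,\dots,g_m)$, and letting $\eps\to 0$ finishes the proof. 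The one genuinely delicate point is the ``plumbing'' in the third paragraph: arranging the $e^{(i)}_j$ so that the pulled-back $\eta^{(i)}_j$ are honestly continuous on $\CG$ with closed support inside $W^{(i)}_j$, which is exactly what legitimizes the functions $s(\eta^{(i)}_j)$, $r(\eta^{(i)}_j)$ and the equivalence $s(\eta^{(i)}_j)\sim r(\eta^{(i)}_j)$; the remaining index bookkeeping (disjointness of the $r(W^{(i)}_j)$ within a label class) is routine.
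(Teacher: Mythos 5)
Your proposal is correct and follows essentially the same route as the paper's proof: reduce to the $\eps$-cuts $(f_i-\eps)_+$, cover their compact supports by the sources of the bisections witnessing $a\preccurlyeq b$, cut with a partition of unity, transport each piece across its bisection using $s(\eta)\sim r(\eta)$, regroup by the target index $l$ using the disjointness of the $r(W^{(i)}_j)$ within each label class, and dominate by $g_l$ via support containment in $C_0(\GU)$. The only differences are cosmetic: you decompose $(f_i-\eps)_+$ exactly as $\sum_j (f_i-\eps)_+e^{(i)}_j$ and ensure continuity of the transported functions by taking compactly supported $e^{(i)}_j$, whereas the paper instead uses the domination $(f_i-\eps)_+\precsim\sum_j h^i_j$ and shrinks to precompact bisections $V^{(i)}_j\subset\overline{V^{(i)}_j}\subset W^{(i)}_j$ for the same purpose.
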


The following result is established by B\"{o}nicke and Li (see Proposition 4.1 in \cite{B-L}), which is a generalization for the case of dynamical systems proved by R{\o}rdam and Sierakowski in \cite{R-S}.

\begin{prop}
Let $\CG$ be a locally compact Hausdorff \'{e}tale groupoid and $E: C^*_r(\CG)\to C_0(\GU)$ be the canonical faithful conditional expectation. Suppose $C_0(\CG)$ separates  the ideals of $C^*_r(\CG)$. Then $C^*_r(\CG)$ is purely infinite if and only if all non-zero functions in $C_0(\GU)_+$ are properly infinite in $C_r^*(\CG)$ and $E(a)\precsim a$ for all $a\in C_r^*(\CG)_+$.
\end{prop}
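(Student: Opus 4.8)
The plan is to prove the equivalence of the two conditions---pure infiniteness of $C^*_r(\CG)$ versus ($C_0(\GU)_+$ consists of properly infinite elements together with $E(a) \precsim a$ for all positive $a$)---by passing through the characterization of pure infiniteness via proper infiniteness of positive elements (the Kirchberg--R\o rdam result recalled in Section 2: $A$ is purely infinite iff every nonzero positive element is properly infinite). The ``only if'' direction is essentially immediate: if $C^*_r(\CG)$ is purely infinite then every nonzero positive element, in particular every nonzero $f \in C_0(\GU)_+$, is properly infinite; and for any $a \in C^*_r(\CG)_+$ the element $E(a)$ lies in the closed ideal generated by $a$ (this is where $C_0(\GU)$ separating ideals, or at least the weaker fact that $E$ is faithful and the ideal generated by $a$ contains $E(a)$, is used) so $E(a) \precsim a$ by the definition of pure infiniteness.

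For the ``if'' direction, I would argue as follows. Suppose all nonzero $f \in C_0(\GU)_+$ are properly infinite and $E(a)\precsim a$ for all $a \in C^*_r(\CG)_+$. Take a nonzero $a \in C^*_r(\CG)_+$; I want to show $a$ is properly infinite, i.e.\ $a \oplus a \precsim a$. First, $a \oplus a \precsim E(a)^{1/2}$-type comparisons: since $E(a) \neq 0$ (as $E$ is faithful) and $E(a) \in C_0(\GU)_+$ is properly infinite, we get $E(a) \oplus E(a) \precsim E(a)$, hence $n \cdot E(a) \precsim E(a)$ for all $n$. Combining with $E(a) \precsim a$, it suffices to dominate $a$ (or $a \oplus a$) by a multiple of $E(a)$ in the Cuntz sense. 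Here the standard move for groupoid $C^*$-algebras is to approximate $a$ by a finite sum $\sum_{i=0}^n f_i$ with each $\overline{\supp(f_i)}$ inside a precompact open bisection $V_i$ (Proposition 2.7 of the excerpt), so that each $f_i$ satisfies $f_i \precsim s(|f_i|) \in C_0(\GU)_+$ via the $s(f)\sim r(f)$ relations; thus $a \precsim \bigoplus_i f_i \precsim \bigoplus_i s(|f_i|)$ up to an $\eps$. Then because each $s(|f_i|)$ is supported in an open set of $\GU$ and $C_0(\GU)$ separates ideals, $s(|f_i|)$ lies in the ideal generated by $E(a)$ (its support is contained in the $\CG$-saturation of $\supp(E(a))$, since $a$ and $E(a)$ generate the same ideal), so $s(|f_i|) \precsim N \cdot E(a)$ for suitable $N$; summing, $a \oplus a \precsim M \cdot E(a) \precsim E(a) \precsim a$. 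Applying the Kirchberg--R\o rdam characterization gives pure infiniteness.

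The main obstacle I expect is the bookkeeping in the step where $a$ is cut into bisection pieces and each piece is Cuntz-dominated by a diagonal of functions in $C_0(\GU)_+$: one needs the $\eps$-cut argument $(f-\eps)_+$ uniformly, the identification $s(f) \sim r(f)$, and the fact that the relevant supports lie in the ideal generated by $E(a)$---this last point is exactly where ``$C_0(\GU)$ separates ideals'' is essential, because without it the ideal generated by $a$ need not be controlled by $E(a)$ at the level of $\GU$-supports. This is precisely the content of Proposition 4.1 of \cite{B-L} and its proof should transfer; the cleanest route is to cite that proposition directly (as the statement already does) and present the short deduction above, rather than reproving the bisection decomposition from scratch.
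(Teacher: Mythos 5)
First, a point of reference: the paper does not prove this proposition at all --- it is quoted from B\"onicke--Li \cite[Proposition 4.1]{B-L}, with the attribution given in the sentence immediately preceding the statement. So your closing recommendation to cite that result directly is exactly what the paper does, and there is no in-paper argument to compare against step by step; what follows assesses your sketch as a freestanding proof.

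Your ``only if'' direction and the overall shape of the ``if'' direction are right, but the sketch has one genuine gap, located at the sentence ``its support is contained in the $\CG$-saturation of $\supp(E(a))$, since $a$ and $E(a)$ generate the same ideal.'' The inclusion $\mathrm{Ideal}(E(a))\subseteq\mathrm{Ideal}(a)$ is immediate from the hypothesis $E(a)\precsim a$, but the inclusion you actually use, $a\in\mathrm{Ideal}(E(a))$, is precisely where the separation hypothesis does real work and it is asserted, not proved. One way to get it: set $\tilde V:=r(\CG\cdot\supp(E(a)))$ and check $\langle\delta_\gamma,\pi_u(a)\delta_\gamma\rangle=E(a)(r(\gamma))$, so $\pi_u(a)=0$ whenever the orbit of $u$ misses $\supp(E(a))$; hence $a\in\ker\big(\bigoplus_{u\notin\tilde V}\pi_u\big)$, and this kernel and $\mathrm{Ideal}(C_0(\tilde V))=\mathrm{Ideal}(E(a))$ have the same intersection with $C_0(\GU)$, so injectivity of $I\mapsto I\cap C_0(\GU)$ forces them to coincide. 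Even granted this, an arbitrary $C_c(\CG)$-approximation of $a$ gives no control on the supports of the pieces $f_i$: you must choose the approximating element inside $C_c(\CG|_{\tilde V})$ (possible since it is dense in the ideal), and only then do the functions $f_i^*f_i$ lie in $C_0(\tilde V)$ so that a covering-by-bisections argument plus an $\eps$-cutdown (membership in an ideal alone does not yield $\precsim N\cdot E(a)$ for a fixed $N$) gives $(a-\eps)_+\precsim E(a)^{\oplus N}$. Note also that once $a\in\mathrm{Ideal}(E(a))$ is established, all of the bisection bookkeeping can be bypassed: a properly infinite positive element Cuntz-dominates every positive element of the closed ideal it generates (see \cite{K-Rord}), which gives $a\precsim E(a)\precsim a$ at once and is essentially how B\"onicke--Li argue. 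Finally, in the ``only if'' direction the containment $E(a)\in\mathrm{Ideal}(a)$ should likewise be routed through separation of ideals (every ideal is generated by its intersection with $C_0(\GU)$, and $E$ maps such an ideal into that intersection); faithfulness of $E$ is not what is doing the work there.
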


First we show paradoxical comparison implies that all non-zero functions in $C_0(\GU)_+$ are properly infinite in $C_r^*(\CG)$.

\begin{prop}
	Let $\CG$ be a locally compact Hausdorff \'{e}tale groupoid. Suppose $\CG$ has paradoxical comparison. Then all non-zero positive functions on $\GU$ are properly infinite in $C_r^*(\CG)$.
\end{prop}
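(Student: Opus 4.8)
The plan is to reduce the statement to Proposition 6.1 applied to a single positive function, via the bridge Proposition 6.1... — more precisely, I would combine the observation that paradoxical comparison means $O \prec_{\CG,d} O$ for every non-empty open $O \subset \GU$ with Proposition 6.1 (the one saying $a \preccurlyeq b$ in $\CK(\CG)$ implies $\diag(f_1,\dots,f_n) \precsim \diag(g_1,\dots,g_m)$ whenever the supports match up). Recall that a positive element $c$ is properly infinite precisely when $c \oplus c \precsim c$; so given a non-zero $f \in C_0(\GU)_+$, it suffices to produce the Cuntz subequivalence $f \oplus f \precsim f$ inside $C^*_r(\CG)$.

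First I would set $O = \supp(f)$, a non-empty open subset of $\GU$. By paradoxical comparison we have $O \prec_{\CG,d} O$, which by the observation recorded just before Theorem 5.1 is exactly the statement $2[(O)] \le [(O)]$ in $\CW(\CG)$; unwinding the definition of the order on $\CW(\CG)$, this says $(O, O) \preccurlyeq (O)$ in $\CK(\CG)$. Now take the pair of functions $(f, f)$ and the single function $(f)$: their open supports are $(O, O)$ and $(O)$ respectively. Applying Proposition 6.1 (the Cuntz-comparison proposition) to these sequences gives
\[
\diag(f, f) \precsim \diag(f) = f
\]
in $C^*_r(\CG)$, i.e. $f \oplus f \precsim f$. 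Hence $f$ is properly infinite. Since $f$ was an arbitrary non-zero element of $C_0(\GU)_+$, this completes the argument.

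The only genuine point requiring care is the identification of $O \prec_{\CG,d} O$ with $2[(O)] \le [(O)]$ and then with $(O,O) \preccurlyeq (O)$; both equivalences are already spelled out in the text (the first in the paragraph preceding Theorem 5.1, the second being just the definitions of $\approx$, the order on $\CW(\CG)$, and concatenation), so there is no real obstacle here — it is essentially a matter of citing Proposition 6.1 with the right input. One small technical wrinkle worth flagging: Proposition 6.1 is stated for finite sequences of functions in $C_0(\GU)_+$ whose open supports one records, and here we use the degenerate length-one and length-two cases, which is unproblematic. I would therefore expect the proof to be short, with the main work having been front-loaded into Proposition 6.1 and the $\CW(\CG)$-reformulation of paradoxical comparison.
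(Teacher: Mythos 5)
Your proposal is correct and is essentially the paper's own argument: the paper likewise sets $O=\supp(f)$, observes that paradoxical comparison gives $(O,O)\preccurlyeq (O)$ in $\CK(\CG)$, and then invokes Proposition 6.1 to conclude $f\oplus f\precsim f$.
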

\begin{proof}
	Let $f\in C_0(\GU)_+$ be a non-zero element. Then $O=\supp(f)$ is a non-empty open set. Since $\CG$ has paradoxical comparison, one  $(O, O)\preccurlyeq (O)$ in $\CK(\CG)$. Then Proposition 6.1 implies $f\oplus f\precsim f$, which means that $f$ is properly infinite in $C_r^*(\CG)$.
\end{proof}

We then turn to show $E(a)\precsim a$ for all $a\in C_r^*(\CG)_+$ in some interesting cases to establish the pure infiniteness of $C_r^*(\CG)$ by Proposition 6.2. We begin with the discussion on $\CG$-invariant closed sets in $\GU$. We denote by $K_\CG(\GU)$ the collection of all $\CG$-invariant closed sets in $\GU$.

\begin{prop}
Let $\CG$ be a locally compact Hausdorff \'{e}tale groupoid.	Then the closed set $\overline{r(\CG u)}$ is $\CG$-invariant for any $u\in \GU$.
\end{prop}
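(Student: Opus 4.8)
The plan is to first observe that the orbit $r(\CG u)$ itself is $\CG$-invariant, and then establish the more general fact that the closure of any $\CG$-invariant subset of $\GU$ is again $\CG$-invariant.

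For the first point, recall that $r(\CG u)=\{r(\gamma):\gamma\in\CG_u\}$, which contains $u$. If $v=r(\gamma)$ with $s(\gamma)=u$ and $\eta\in\CG$ with $s(\eta)=v=r(\gamma)$, then $(\eta,\gamma)\in\CG^{(2)}$ with $s(\eta\gamma)=s(\gamma)=u$, so $r(\eta)=r(\eta\gamma)\in r(\CG u)$. Hence $r(\CG\cdot r(\CG u))\subseteq r(\CG u)$; the reverse inclusion is immediate since units lie in $\CG$. Thus $r(\CG u)$ is $\CG$-invariant, i.e. $\CG^{r(\CG u)}=\CG_{r(\CG u)}$.

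For the second point, let $D\subseteq\GU$ be $\CG$-invariant; the goal is $s(\gamma)\in\overline{D}\iff r(\gamma)\in\overline{D}$ for every $\gamma\in\CG$. Replacing $\gamma$ by $\gamma^{-1}$ interchanges $s$ and $r$, so it suffices to prove $s(\gamma)\in\overline{D}\Rightarrow r(\gamma)\in\overline{D}$. Since $\CG$ is \'etale, choose an open bisection $U$ with $\gamma\in U$ and consider the homeomorphism $\phi\coloneqq r|_U\circ(s|_U)^{-1}\colon s(U)\to r(U)$ between these open subsets of $\GU$; then $\phi(s(\gamma))=r(\gamma)$, and for any $w\in s(U)\cap D$ the element $(s|_U)^{-1}(w)$ has source $w\in D$, so by $\CG$-invariance its range $\phi(w)$ lies in $D$, whence $\phi(s(U)\cap D)\subseteq D$. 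Now argue by contradiction: if $r(\gamma)\notin\overline{D}$, then $W\coloneqq r(U)\setminus\overline{D}$ is an open neighbourhood of $r(\gamma)$ inside $r(U)$ disjoint from $D$, so $\phi^{-1}(W)$ is an open neighbourhood of $s(\gamma)$. But $\phi^{-1}(W)$ is disjoint from $D$, since any point of $\phi^{-1}(W)\cap D$ lies in $s(U)\cap D$ and is therefore sent by $\phi$ into $D\subseteq\overline{D}$, contradicting that its image lies in $W$. This contradicts $s(\gamma)\in\overline{D}$, so $r(\gamma)\in\overline{D}$.

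Combining the two points with $D=r(\CG u)$ shows that $\overline{r(\CG u)}$ is $\CG$-invariant. The only slightly delicate step is the second one, where one must localize to an open bisection in order to transport the closure relation through the maps $r$ and $s$; everything else is a direct manipulation of the groupoid axioms together with the \'etale property.
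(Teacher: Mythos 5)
Your proof is correct, and it is organized differently from the paper's. The paper argues in one pass: given $v\in\overline{r(\CG u)}$ and $\eta$ with $s(\eta)=v$, it shrinks an open bisection $M$ around $\eta$ so that $r(M)$ lies in a prescribed neighbourhood $O$ of $r(\eta)$, notes that $s(M)$ must meet the orbit $r(\CG u)$, and then composes bisections ($T=M\cdot W$, with $W$ a bisection around an arrow from $u$) to exhibit an explicit arrow $\zeta$ with $s(\zeta)=u$ and $r(\zeta)\in O$, showing $r(\eta)\in\overline{r(\CG u)}$. You instead split the statement into a purely algebraic observation (the orbit $r(\CG u)$ is $\CG$-invariant) plus a general topological lemma (the closure of any $\CG$-invariant subset of $\GU$ is $\CG$-invariant), the latter proved by transporting neighbourhoods through the homeomorphism $r|_U\circ(s|_U)^{-1}$ attached to a single bisection and arguing by contradiction. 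The underlying mechanism---using a bisection to move the closure condition from $s(\gamma)$ to $r(\gamma)$---is the same, but your decomposition isolates a reusable lemma, avoids composing two bisections, and never needs to produce an explicit arrow out of $u$; the paper's version is more hands-on and proves only the specific statement. Your appeal to invariance in both directions (via passing to $\gamma^{-1}$) is legitimate, since $r(\CG D)=D$ is equivalent to $\CG^D=\CG_D$, as recorded in Section 2.
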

\begin{proof}
	Write $Y=\overline{r(\CG u)}$ and fix a $v\in Y$ and an $\eta\in \CG$ with $s(\eta)=v$. For any open set $O$ with $r(\eta)\in O$ one can choose an open bisection $M$ such that $\eta\in M$ and $r(\eta)\in r(M)\subset O$ because $\CG$ is \'{e}tale. Observe $s(M)\cap r(\CG u)\neq\emptyset$ and thus choose a $\gamma\in \CG$ such that $s(\gamma)=u$ and $r(\gamma)\in s(M)$. In addition, choose an open bisection $W$ such that $\gamma\in W$ and $r(W)\subset s(M)$. Now define $T=M\cdot W$, which is an open bisection such that $u\in s(W)=s(T)$ and $r(T)\subset r(M)\subset O$. This implies that there is a $\zeta\in T$ such that $s(\zeta)=u$ and $r(\zeta)\in O$. Therefore, one has $r(\eta)\in \overline{r(\CG u)}=Y$ and thus $Y$ is $\CG$-invariant.
\end{proof}

\begin{lem}
	Let $V$ be a non-empty open set in $\GU$ and $M$ be a non-empty set in $\GU$. Then $M\subset r(\CG V)$ if and only if for any closed $\CG$-invariant  subset $Y$ of $\GU$, one has $M\cap Y\neq \emptyset$ implying $V\cap Y\neq \emptyset$.
\end{lem}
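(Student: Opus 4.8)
The plan is to prove both directions by working with the closed $\CG$-invariant set $\overline{r(\CG V)}$, which is $\CG$-invariant by Proposition 6.4. First I would establish the easier direction: suppose $M \subset r(\CG V)$ and let $Y$ be a closed $\CG$-invariant set with $M \cap Y \neq \emptyset$. Pick $u \in M \cap Y$. Since $u \in M \subset r(\CG V)$, there is $\gamma \in \CG$ with $s(\gamma) \in V$ and $r(\gamma) = u \in Y$. Because $Y$ is $\CG$-invariant, $\CG^Y = \CG_Y$, so $s(\gamma) \in Y$ as well; hence $s(\gamma) \in V \cap Y$, giving $V \cap Y \neq \emptyset$. (Here I am using that $\CG$-invariance of the closed set $Y$ means $\CG_Y = \CG^Y$, equivalently $r(\gamma) \in Y \Leftrightarrow s(\gamma) \in Y$.)

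For the converse, I would argue by contraposition. Suppose $M \not\subset r(\CG V)$, so there is a point $u \in M$ with $u \notin r(\CG V)$. Set $Y = \overline{r(\CG V)}$, a closed $\CG$-invariant set by Proposition 6.4 (applied, if one likes, to each $r(\CG v)$ for $v \in V$, or directly: the proof of Proposition 6.4 adapts verbatim to show $\overline{r(\CG V)}$ is $\CG$-invariant, since any $v' \in \overline{r(\CG V)}$ and $\eta$ with $s(\eta) = v'$ can be pushed forward using the same bisection-composition argument to show $r(\eta) \in \overline{r(\CG V)}$). Then $V \subset r(\CG V) \subset Y$, so $V \cap Y = V \neq \emptyset$. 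It remains to show $u \notin Y$, i.e. that $u$ is not in the closure of $r(\CG V)$; this is where the étale hypothesis enters. The key point is that $r(\CG V)$ is an \emph{open} set: indeed $r(\CG V) = \bigcup_{i} r(A_i)$ where $\{A_i\}$ is a cover of $\CG_V$ by open bisections, and each $r(A_i)$ is open since $\CG$ is étale — so $r(\CG V)$ is a union of open sets, hence open. But wait: the closure of an open set can strictly contain it, so $u \notin r(\CG V)$ does not immediately give $u \notin \overline{r(\CG V)}$.

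The main obstacle is precisely this last point, and here is how I would resolve it. I claim that for the statement to hold we should take $Y = \overline{r(\CG V)}$ but verify $M \cap Y = \emptyset$ differently — actually, rereading the claim, the correct route is: the hypothesis ``for every closed $\CG$-invariant $Y$, $M \cap Y \neq \emptyset \Rightarrow V \cap Y \neq \emptyset$'' is equivalent (contrapositive, applied to the complement) to ``for every open $\CG$-invariant $U$, $V \cap U = \emptyset \Rightarrow M \cap U = \emptyset$'', i.e. $M$ is contained in the smallest open $\CG$-invariant set containing... no. Let me instead argue directly: assume the right-hand condition and let $u \in M$. Consider $Z := \overline{r(\CG u)}$, which is closed and $\CG$-invariant by Proposition 6.4, and contains $u$. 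So $M \cap Z \ni u$, hence by hypothesis $V \cap Z \neq \emptyset$: there is $w \in V \cap \overline{r(\CG u)}$. Now $w \in V$ with $V$ open, and $w \in \overline{r(\CG u)}$, so by definition of closure every neighborhood of $w$ — in particular $V$ — meets $r(\CG u)$; pick $v \in V \cap r(\CG u)$. Then there is $\gamma \in \CG$ with $s(\gamma) = u$ and $r(\gamma) = v \in V$, whence $\gamma^{-1} \in \CG$ has $s(\gamma^{-1}) = v \in V$ and $r(\gamma^{-1}) = u$, so $u \in r(\CG V)$. Since $u \in M$ was arbitrary, $M \subset r(\CG V)$. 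This cleanly avoids the closure-of-open-set issue, and the only groupoid-specific inputs are Proposition 6.4 and the elementary symmetry $\gamma \mapsto \gamma^{-1}$ swapping source and range.
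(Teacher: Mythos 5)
Your final argument is correct and is essentially the paper's own proof: the forward direction uses $\CG$-invariance of $Y$ (via $\CG_Y=\CG^Y$) to move a point of $M\cap Y$ back into $V\cap Y$, and the converse applies the hypothesis to $Z=\overline{r(\CG u)}$ (closed and $\CG$-invariant by Proposition 6.4), uses openness of $V$ to meet $r(\CG u)$ itself, and inverts the resulting arrow to get $u\in r(\CG V)$. The abandoned detour through $\overline{r(\CG V)}$ is harmless, since you correctly recognized the closure issue and discarded that route.
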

\begin{proof}
	Suppose $M\subset r(\CG V)$ and $M\cap Y\neq \emptyset$, where $Y$ is a closed $\CG$-invariant subset $Y$.  Let $u\in M\cap Y$. Then $u\in r(\CG V)$ and thus there is a $\gamma\in \CG$ such that $s(\gamma)=u$ and $r(\gamma)\in V$. Since $Y$ is $\CG$-invariant and $u\in Y$, one has $r(\gamma)\in Y$, which entails that $V\cap Y\neq \emptyset$.
	
	For the reverse direction, let $u\in M$ and define $Y=\overline{r(\CG u)}$, which is a closed $\CG$-invariant set in $\GU$ by Proposition 6.4. Then one has $V\cap Y\neq \emptyset$ because $u\in M\cap Y\neq \emptyset$. But this actually implies that $V\cap r(\CG u)\neq \emptyset$ since $V$ is open. Then there is an $\eta\in \CG$ such that $s(\eta)=u$ and $r(\eta)\in V$, which entails that $u\in r(\CG V)$ and thus $M\subset r(\CG V)$ holds.
\end{proof}

\begin{defn}
 An open set $M$ in $\GU$ is said to be \textit{groupoid small} if for any compact sets $C\subset \CG\setminus \GU$  and $K\subset M$ there are a compact set $F$ and an open set $O$ in $\GU$ with $F\subset O\subset M$ such that $K\subset r(\CG\cdot F)$ and $OCO=\emptyset$ 
\end{defn}

Roughly speaking, a groupoid small set $M$ is small in the following sense. Given compact sets $C\subset \CG\setminus \GU$ and $K\subset M$ one can always find a ``topological small'' open set $O\subset M$ in the sense that it is disjoint with its translation $r(CO)$ by $C$. However, there is a ``dynamical large'' compact set $F\subset O$  in the sense that all translations of $F$ in $\GU$ covers $K$.
It is straightforward to see that all open sets $M\subset \GU$ are groupoid small if $\CG$ is  minimal and topological principal. We will show below in Proposition 6.9 a stronger result.
Now, we have the following key lemma.

\begin{lem}
Let $\CG$ be a locally compact Hausdorff \'{e}tale groupoid. Suppose $\CG$ is weakly purely infinite and every open set $U$ in $\GU$ is groupoid small then $E(a)\precsim a$ holds for any $a\in C^*_r(\CG)_+$. 
\end{lem}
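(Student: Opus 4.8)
The plan is to reduce the statement to a concrete approximation: given $a \in C^*_r(\CG)_+$ and $\eps > 0$, I want to produce functions that witness $(E(a)-\eps)_+ \precsim a$, from which $E(a) \precsim a$ follows by the standard characterization of Cuntz subequivalence via cut-downs (Proposition 2.17 in \cite{A-P-T}). First I would approximate $a$ in norm by an element $b \in C_c(\CG)_+$ and, using Proposition 2.9, write $b = \sum_{i=0}^n b_i$ where $b_0 \in C_c(\GU)$, each $b_i$ for $i \geq 1$ is supported in a precompact open bisection $V_i$ with $V_i \cap \GU = \emptyset$, and $b_0$ is supported in an open set $V_0 \subset \GU$. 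Since $E$ is contractive and $E(b_i) = 0$ for $i \geq 1$ (their supports miss $\GU$), we have $E(b) = b_0 = E_0(b)$, so it suffices to work with the diagonal part $b_0$ and control the off-diagonal correction coming from the compact set $C = \bigcup_{i=1}^n \overline{V_i} \subset \CG \setminus \GU$.

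The heart of the argument is then the groupoid smallness hypothesis. Take $M = \supp(b_0) = U$ and the compact set $K = \{u \in \GU : b_0(u) \geq \eps\} \subset U$, together with the compact set $C \subset \CG \setminus \GU$ built above. Groupoid smallness gives a compact set $F$ and an open set $O$ with $F \subset O \subset U$, $K \subset r(\CG F)$, and $OCO = \emptyset$. The condition $OCO = \emptyset$ means that compressing $b$ by a function $h \in C_c(\GU)_+$ with $\supp(h) \subset O$ kills all the off-diagonal pieces: $h b h = h b_0 h$, a function supported in $O$. Choosing $h$ to be $1$ on a suitable neighborhood of $F$, I get $(b_0 - \eps)_+ \precsim h b_0 h = h b h \precsim b \sim a$ up to the $\eps$-error, provided $\supp((b_0-\eps)_+) \subset r(\CG \cdot \supp(h b_0 h))$ — and here is where weak pure infiniteness enters. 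By Lemma 6.5 and the fact that $F \subset O$ with $K \subset r(\CG F)$, together with $K \supset \supp((b_0-\eps)_+)$, the set $\supp((b_0-\eps)_+)$ lies in $r(\CG \cdot O')$ for the relevant open piece $O'$; then weak pure infiniteness ($O_1 \prec_\CG O_2$ whenever $O_1 \subset r(\CG O_2)$) combined with Proposition 6.1 yields the needed Cuntz subequivalence $(b_0 - \eps)_+ \precsim h b_0 h$ in $C^*_r(\CG)$.

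Assembling the pieces: $(E(a) - 3\eps)_+ \precsim (b_0 - \eps)_+ \precsim h b_0 h = h b h \precsim b \precsim a$ (the first step using $\|E(a) - b_0\| = \|E(a-b)\| \le \|a - b\| < \eps$), and since $\eps > 0$ is arbitrary, $E(a) \precsim a$. The main obstacle I anticipate is the bookkeeping in the second paragraph — matching up the compact set $F$, the open neighborhood $O$, and the support of the cut-down function $h$ so that simultaneously (a) $OCO = \emptyset$ forces $hbh$ to be diagonal, (b) $K \subset r(\CG F)$ provides enough "room" via weak pure infiniteness, and (c) the cut-down $(b_0 - \eps)_+$ is genuinely dominated. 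One must be careful that $r(\CG \cdot \supp(hb_0h))$ really does contain $\supp((b_0-\eps)_+)$; this should follow by applying Lemma 6.5 to $M = K$ and $V = \supp(h b_0 h)$, using that $K \subset r(\CG F) \subset r(\CG \cdot \supp(h b_0 h))$ once $h$ is chosen to equal $1$ on $F$. A secondary technical point is handling the case where $b_0$ itself is not quite supported where one wants after the compression; this is managed by first shrinking to the set $\{b_0 \geq \eps\}$ before invoking smallness.
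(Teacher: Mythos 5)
Your overall strategy is the same as the paper's (approximate by an element of $C_c(\CG)$, split it into a diagonal part $b_0$ plus finitely many pieces supported in bisections missing $\GU$, use groupoid smallness to produce $F\subset O$ with $OCO=\emptyset$ so that compression by $h$ kills the off-diagonal part, then invoke weak pure infiniteness together with Proposition 6.1), and the front end of your chain is fine: $(E(a)-3\eps)_+\precsim(b_0-\eps)_+$ and $(b_0-\eps)_+\precsim hb_0h=hbh$ both go through as you describe. The genuine gap is at the back end, in the step ``$hbh\precsim b\precsim a$''. Cuntz subequivalence $b\precsim a$ does \emph{not} follow from $\|b-a\|<\eps$; norm closeness only yields $(b-\eps)_+\precsim a$ (take $a=0$ and $b$ a small multiple of a projection to see the failure). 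So as written the chain does not terminate at $a$, and the slack you built in ($3\eps$) sits at the wrong end of the chain: it is spent comparing $E(a)$ with $b_0$, not comparing the compressed approximant with $a$.

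Repairing this is not just bookkeeping, because it forces a different application of groupoid smallness than the one you made. To land on $a$ you need a cut-down at a level $\delta$ fixed \emph{before} choosing the approximant, e.g.\ $(hbh-\delta)_+\precsim hah\precsim a$ when $\|b-a\|<\delta$ and $\|h\|\le 1$ (the paper instead approximates $a^{1/2}$ by $c$, sets $b=c^*c$, and uses $(g c^* c g-\delta)_+\sim(c g^2 c^*-\delta)_+\precsim(cc^*-\delta)_+\precsim a$). But then the middle comparison must produce $(b_0-\eps)_+\precsim(hbh-\delta)_+$, which via weak pure infiniteness requires $F\subset\supp\bigl((hb_0h-\delta)_+\bigr)$, i.e.\ $b_0>\delta$ on $F$. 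Your choice $M=\supp(b_0)$ only guarantees $b_0>0$ on $F$; the positive minimum of $b_0$ on $F$ is not controlled by the pre-chosen accuracy, and trying to fix $\delta$ after the fact is circular, since $F$ (hence any lower bound) depends on $b$, while the accuracy of $b$ must beat $\delta$. The paper avoids this by applying groupoid smallness to the level set $M=\{u\in\GU: E(b)(u)>\eps-\delta/2\}$ (with $\delta<2\eps/3$ chosen first), so that on $F\subset M$ one automatically has $E(b)>\delta$, making the cut-down at level $\delta$ compatible with both the weak pure infiniteness step and the final return to $a$. With that modification your argument matches the paper's proof.
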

\begin{proof}
	Denote by $A= C^*_r(\CG)$ and let $a\in A_+$. Without loss of generality, we can assume $\|a\|=1$. For any $\epsilon>0$, choose a $\delta<2\epsilon/3$ and define an open set \[U=\supp((E(a)-\epsilon)_+)=\{u\in \GU: E(a)(u)>\epsilon\},\] which is a precompact open set in $\GU$ because $E(a)\in C_0(\GU)_+$. Then choose a $c\in C_c(\CG)$ such that $\|c\|\leq 2$ and $\|c-a^{1/2}\|<\delta/8$. This implies that $\|c^**c-a\|<\delta/2$ and $\|c*c^*-a\|<\delta/2$.
	
	Define $b=c^**c$. Proposition 2.6 implies that  $b=\sum_{k=0}^mf_k$, where each $f_k$ is supported on a precompact open bisection $B_k$, i.e., $\overline{\supp(f_k)}\subset B_k$ such that $B_0\subset \GU$ and $B_k\cap \GU=\emptyset$ for all $0<k\leq m$. Define a compact set $C=\bigcup_{k=1}^m\overline{B_k}$. Observe that $C\cap \GU=\emptyset$ because $\GU$ is clopen. Note that $\|E(a)-E(b)\|<\delta/2$. Define an open set 
	\[M=\{u\in \GU: E(b)(u)>\epsilon-\delta/2\}\] and thus one has 
	 \[U\subset \overline{U}\subset \{u\in \GU: E(a)(u)\geq\epsilon\}\subset M.\]
	 Now since $M$ is groupoid small by assumption, there are a compact set $F$ and an open set $O$ in $\GU$ with $F\subset O\subset M$ such that $\overline{U}\subset r(\CG\cdot F)$ and $OCO=\emptyset$. Then choose a function $g\in C_c(\GU)_+$ such that $0\leq g\leq 1$, $\overline{\supp(g)}\subset O$ and $g\equiv 1$ on $F$. Because $\supp(g*f_k*g)\subset OB_kO\subset  OCO=\emptyset$ for any $0<k\leq m$, one has $g*f_k*g=0$ for any $0<k\leq m$.
	Then one has \[g*b*g=g*(\sum_{k=0}^mf_k)*g=gf_0g=gE(b)g.\]
	Now the fact $F\subset M$  implies that for any $u\in F$ one has 
	\[(gE(b)g)(u)=E(b)(u)>\epsilon-\delta/2>\delta\] by our choice of $\delta$. This then entails that 
	\[F\subset \{u\in \GU: (gE(b)g)(u)>\delta\}=\supp((gE(b)g-\delta)_+).\]
	This then shows that $U\subset\overline{U}\subset r(\CG \cdot \supp((gE(b)g-\delta)_+))$ since $\overline{U}\subset r(\CG\cdot F)$. Now since $\CG$ is weakly purely infinite, one has $U\prec_{\CG} \supp((gE(b)g-\delta)_+)$.  Then Proposition 6.1 implies 
	\begin{align*}
	(E(a)-\epsilon)_+\precsim (gE(b)g-\delta)_+=(g*b*g-\delta)_+=(g*c^**c*g-\delta)_+
	\end{align*}
	On the other hand, Lemmas 1.4 and 1.7 in \cite{NCP} imply that 
	\[(g*c^**c*g-\delta)_+\sim (c*g^2*c^*-\delta)_+\precsim (c*c^*-\delta)_+\precsim a.\]
	These show that $(E(a)-\epsilon)_+\precsim a$ and thus $E(a)\precsim a$ because $\epsilon$ can be chosen arbitrarily small.
	\end{proof}

Now we can establish the following result.

\begin{thm}
		Let $\CG$ be a locally compact Hausdorff \'{e}tale groupoid such that $C_0(\GU)$ separates ideals of $C_r^*(\CG)$.   If $\CG$ is purely infinite  and all open sets in $\GU$ are groupoid small then $C^*_r(\CG)$ is purely infinite. 
\end{thm}
\begin{proof}
	This is a direct application of Theorem 5.1,  Proposition 6.2, 6.3 and Lemma 6.7.
\end{proof}

We have shown in Section 3 that all examples of  dynamical systems in \cite{A-D} and \cite{L-S} and all topological principal $n$-filling locally compact Hausdorff \'{e}tale groupoids in \cite{J-R}, \cite{YS} and \cite{Ra-Sims} are minimal and purely infinite. Note that all the examples in \cite{A-D} and \cite{L-S} are topological free as well. In addition, recall for any locally compact Hausdorff \'{e}tale minimal topological principal groupoid $\CG$, all open sets in $\GU$ are groupoid small. Therefore, our Theorem 6.8 above covers these results mentioned above. However,  for a non-minimal essentially principal ample groupoid, it is not known whether all open sets in the unit space are automatically groupoid small. If so, one can also recover results on pure infiniteness in \cite{R-S}, \cite{B-L} and \cite{Ra-Sims} for $C^*$-algebras of ample groupoids because the $(2,1)$-paradoxicality of compact open sets there  is proved by Theorem 5.15 being equivalent to pure infiniteness of the groupoids.

\begin{prop}
	Let $\CG$ be a locally compact Hausdorff essentially principal \'{e}tale groupoid. Suppose there are only finitely many $\CG$-invariant closed sets. Then every open set $O$ in $\GU$ is groupoid small.
\end{prop}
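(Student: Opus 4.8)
The plan is to use the finiteness of $K_\CG(\GU)$ to stratify $\GU$ by orbit closures, and on each stratum that meets the data to produce a single ``landing centre'' with trivial isotropy; the finiteness is precisely what forces these centres to be mutually unlinked by the arrows of $C$, which is the leverage needed to arrange $OCO=\emptyset$. Fix an arbitrary open set $M\subset\GU$ (playing the role of $M$ in Definition 6.6), a compact $C\subset\CG\setminus\GU$, and a compact $K\subset M$; the cases $M=\emptyset$ and $K=\emptyset$ are trivial. For $Y\in K_\CG(\GU)$ set $Y^\circ=Y\setminus\bigcup\{Y'\in K_\CG(\GU):Y'\subsetneq Y\}$. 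Using Proposition 6.4 one checks that $Y^\circ=\{u\in Y:\overline{r(\CG u)}=Y\}$, that $Y^\circ$ is open in $Y$, and that $\GU=\bigsqcup_{Y\in K_\CG(\GU)}Y^\circ$; by finiteness, $K$ meets only finitely many strata, say $Y_1^\circ,\dots,Y_q^\circ$. Essential principality makes each $\CG|_{Y_r}$ topologically principal, and since $\CG_u^u\subset\CG|_{Y_r}$ for $u\in Y_r$ (invariance of $Y_r$), the units of $Y_r$ with trivial isotropy in $\CG$ are dense in $Y_r$; as $Y_r^\circ\cap M$ is nonempty and open in $Y_r$, I pick such a unit $z_r\in Y_r^\circ\cap M$.

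The key point is that $\{z_1,\dots,z_q\}$ is \emph{$C$-independent}, i.e.\ no $\gamma\in C$ has both endpoints in it. Trivial isotropy of $z_r$ rules out $s(\gamma)=r(\gamma)=z_r$ (recall $\gamma\notin\GU$); and an arrow with $s(\gamma)=z_r$, $r(\gamma)=z_{r'}$ would put $z_r$ and $z_{r'}$ in the same orbit, so $r(\CG z_r)=r(\CG z_{r'})$ and hence $Y_r=\overline{r(\CG z_r)}=\overline{r(\CG z_{r'})}=Y_{r'}$, forcing $r=r'$. Therefore the compact set $(s,r)(C)\subset\GU\times\GU$ is disjoint from the finite set $\{(z_i,z_j):1\le i,j\le q\}$, so one can choose open sets $V_r$ with $z_r\in V_r\subset M$ small enough that $\bigl(\bigcup_r V_r\bigr)\times\bigl(\bigcup_r V_r\bigr)$ misses $(s,r)(C)$ (a finite intersection of open conditions, one around each pair $(z_i,z_j)$); set $O=\bigcup_rV_r$, so $O\subset M$ and $OCO=\emptyset$.

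Finally I build a compact $F\subset O$ with $K\subset r(\CG\cdot F)$. For $k\in K$, say $k\in Y_r^\circ$: since $\overline{r(\CG k)}=Y_r\ni z_r$, the orbit of $k$ meets $V_r$, so there is an open bisection $Q_k$ carrying an arrow over $k$ and, after replacing $Q_k$ by $Q_k\cap r^{-1}(V_r)$, with $r(Q_k)\subset V_r$. The sets $s(Q_k)$ cover $K$, so finitely many $Q_{k_1},\dots,Q_{k_p}$ suffice; choose precompact open $U_i$ with $\overline{U_i}\subset s(Q_{k_i})$ and $K\subset\bigcup_iU_i$, and put $F=\bigcup_i r\bigl((s|_{Q_{k_i}})^{-1}(\overline{U_i})\bigr)$. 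Then $F$ is compact and $F\subset O\subset M$; and for $u\in K\cap U_i$ the arrow of $Q_{k_i}$ with source $u$ has range in $F$, so passing to its inverse witnesses $u\in r(\CG\cdot F)$. Hence $K\subset r(\CG\cdot F)$, so $M$ is groupoid small.

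The step to get right is the placement of the centres $z_r$ \emph{inside the open stratum $Y_r^\circ$}: it is exactly the equivalence $z_r\in Y_r^\circ\Leftrightarrow\overline{r(\CG z_r)}=Y_r$, together with finiteness of $K_\CG(\GU)$, that upgrades the purely local input from essential principality (trivial isotropy at each $z_r$) to the global statement that no arrow of $C$ joins two distinct centres. Everything else — the stratification, the separation of $\bigl(\bigcup_r V_r\bigr)\times\bigl(\bigcup_r V_r\bigr)$ from $(s,r)(C)$, and the compactness covering producing $F$ — is routine bookkeeping once this is arranged.
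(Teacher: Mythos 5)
Your proof is correct, and while it shares the paper's basic skeleton (finitely many trivial-isotropy ``centres'' inside $M$, one for each relevant $\CG$-invariant closed set, with small neighbourhoods around them producing $O$ and a compact $F$), the two key steps are executed by genuinely different mechanisms. For the range condition, the paper selects its points $u_i$ in the \emph{minimal} elements $Y_i$ of $\{Y\in K_\CG(\GU):Y\cap O\neq\emptyset\}$ and then invokes the invariant-set criterion of Lemma 6.5 to get $K\subset r(\CG\cdot\overline{D})$; you instead stratify $\GU$ by the sets $Y^\circ=\{u\in Y:\overline{r(\CG u)}=Y\}$ (openness of $Y^\circ$ in $Y$ being exactly where finiteness of $K_\CG(\GU)$ enters), place $z_r\in Y_r^\circ\cap M$ for the strata meeting $K$, and prove $K\subset r(\CG\cdot F)$ directly: every $k\in Y_r^\circ$ has $\overline{r(\CG k)}=Y_r\ni z_r$, so its orbit enters $V_r$, and a routine bisection-and-compactness covering assembles $F$. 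For the condition $OCO=\emptyset$, the paper covers $C$ by finitely many bisections and runs an elaborate construction of auxiliary neighbourhoods ($W_i$, $W_{i,\gamma}$, $N^i_k$, $P_i$, $Q_i$) to force $r(CV)$ disjoint from $V$; you replace all of this by the observation that the compact set $(s,r)(C)\subset\GU\times\GU$ misses the finite set of pairs $(z_i,z_j)$ --- the diagonal pairs by trivial isotropy (as $C\cap\GU=\emptyset$), the off-diagonal ones because $z_i,z_j$ have distinct orbit closures, which is precisely what placing the centres in the open strata guarantees --- and then shrink the $V_r$ so that $(\bigcup_rV_r)^2$ avoids $(s,r)(C)$. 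Your route buys brevity and transparency: no bisection cover of $C$, no disjointness bookkeeping for the $V_i$, and no appeal to Lemma 6.5. The paper's route buys slightly more structure (pairwise disjoint $V_i$ and explicit control of where $C$ maps them, with the invariant-set combinatorics isolated in the reusable Lemma 6.5), but that extra information is not needed for groupoid smallness, so your argument is a legitimate and leaner alternative.
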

\begin{proof}
	Let $O$ be an open set and $K$ be a compact subset of $O$. Let $C$ be a compact set in $\CG\setminus \GU$. Since $\CG$ is locally compact Hausdorff, one can choose finitely many open bisections $B_1, \dots, B_m\subset \CG\setminus \GU$ and compact set $F_j\subset B_j$ for each $j\leq m$ such that \[C\subset\bigcup_{j=1}^mF_j\subset \bigcup_{j=1}^mB_j.\] 
	Define 
	\[\CI_O=\{Y\in K_\CG(\GU): Y\cap O\neq \emptyset\}.\]
	Note that ``$\subset$'' defines a natural order on $\CI_O$ by announcing $Y_1\leq Y_2$ if $Y_1\subset Y_2$. Since $K_\CG(\GU)$ is finite, one can enumerate all minimal elements $\{Y_1, \dots, Y_n\}$ in $\CI_O$ with respect to the order ``$\subset$''.  Then for any $i\leq n$  observe that $O\cap(Y_i\setminus(\bigcup_{j\neq i}Y_j)\neq \emptyset$. Because if not, $O\cap(Y_i\setminus\bigcup_{j\neq i}Y_j)=\emptyset$ implies that there is a $j\neq i$ such that $O\cap Y_i\cap Y_j\neq\emptyset$. But this implies that $Y_i\cap Y_j\in \CI_O$ and $Y_i\cap Y_j$ is a proper subset of $Y_i$, which is a contradiction to the minimality of $Y_i$ in $\CI_O$. Then define $U_i=O\setminus \bigcup_{j\neq i}Y_j$ and thus $U_i\cap Y_i\neq \emptyset$. Since $\CG$ is essentially principal, for each $i\leq n$ one can choose a $u_i\in U_i\cap Y_i$ with the trivial isotropy. Observe that \[r(Cu_i)\subset r(\bigcup_{j=1}^mF_ju_i)\subset r(\bigcup_{j=1}^mB_ju_i)\subset Y_i\setminus \bigcup_{j\neq i}Y_j\]
	 because $u_i\in Y_i\setminus \bigcup_{j\neq i}Y_j$, which is $\CG$-invariant. In addition, all units $r(\gamma)$ for $\gamma\in \bigcup_{j=1}^mB_j$ with $s(\gamma)=u_i$ are distinct. Indeed, suppose there are $\gamma$ and $\eta\in \bigcup_{j=1}^mB_j$ such that $s(\gamma)=s(\eta)=u_i$ and $r(\gamma)=r(\eta)$. Then, one has $s(\gamma^{-1}\eta)=r(\gamma^{-1}\eta)=u_i$, which implies $\gamma^{-1}\eta=u_i$ because $u_i$ has the trivial isotropy and thus $\gamma=\eta$. Therefore, the units in \[\{r(\gamma): \gamma \in \bigcup_{j=1}^mB_j, s(\gamma)=u_i, i=1,\dots, n\}\cup\{u_i: i=1, \dots, n\}\]
	are distinct. Since $\GU$ is Hausdorff, one can find a family 
	\[\{W_{i, \gamma}: \gamma\in \bigcup_{j=1}^mB_j, s(\gamma)=u_i, i=1,\dots, n\}\cup\{W_i: i=1, \dots, n\}\] of disjoint open sets in $\GU$ such that for any $i=1,\dots, n$ one has $u_i\in W_i$ and $r(\gamma)\in W_{i, \gamma}$ for all $u_i$ and $\gamma \in \bigcup_{j=1}^mB_j$ with $s(\gamma)=u_i$. Now fix an $i\leq n$ and enumerate $\{\gamma\in \bigcup_{j=1}^mB_j: s(\gamma)=u_i\}$ by $\{\gamma^i_1, \dots, \gamma^i_{l_i}\}$. Then for each $\gamma^i_k$, where $k\leq l_i$ one can choose an open bisection $N^i_k$ such that 
	\[\gamma^i_k\in N^i_k\subset \bigcap_{\gamma^i_k\in B_j}B_j \]
	and $r(N^i_k)\subset W_{i, \gamma^i_k}$.  Define $P_i=\bigcap_{k=1}^{l_i}s(N^i_k)$, which is an open neighborhood of $u_i$ in $\GU$.  Since for each $k\leq l_i$, both $N^i_k$ and $\bigcap_{\gamma^i_k\in B_j}B_j$ are bisections,  for any $v\in P_i$, one has 
	\[N^i_kv=\bigcap_{\gamma^i_k\in B_j}B_jv\]
	Write $\{\eta\}=N^i_kv=\bigcap_{\gamma^i_k\in B_j}B_jv$. Note that for each $B_j$ with $\gamma^i_k\in B_j$, the $\eta$ is the only element in $B_j$ such that $s(\eta)=v$ because each $B_j$ is also a bisection. This entails that $B_jv=\{\eta\}$ for any $B_j$ satisfying $\gamma^i_k\in B_j$ and thus in fact one has
	\[N^i_kv=\bigcup_{\gamma^i_k\in B_j}B_jv.\]
	
	Now for each $i\leq n$ define an open neighborhood $Q_i$ of $u_i$ by 
	\[Q_i=\bigcap_{u_i\in s(B_j)}s(B_j)\setminus \bigcup_{u_i\notin s(B_j)}s(F_j)\] and define $V_i=P_i\cap W_i\cap Q_i\cap U_i$, which is still an open neighborhood of $u_i$. Now for each $v\in V_i$ one has 
	\[Cv\subset \bigcup_{j=1}^mF_jv=\bigcup_{u_i\in s(B_j)}F_jv=\bigcup_{u_i\in s(B_j)}B_jv=\bigcup_{k=1}^{l_i}\bigcup_{\gamma^i_k\in B_j}B_jv=\bigcup_{k=1}^{l_i}N^i_kv.\]
	Then this implies that \[r(Cv)\subset r(\bigcup_{k=1}^{l_i}N^i_kv)\subset \bigsqcup_{k=1}^{l_i}W_{i, \gamma^i_k}.\]
	Now for each $i\leq n$ and $u_i\in V_i$, choose a precompact open set $D_i$ such that 
	\[u_i\in D_i\subset \overline{D_i}\subset V_i\]
	 and define $V=\bigsqcup_{i=1}^nV_i$ and $D=\bigsqcup_{i=1}^nD_i$. Note that, for each $\CG$-invariant closed set $Y\in \CI_O$, there is a minimal element $Y_i\in \CI_O$ such that $Y_i\subset Y$. By our construction, $u_i\in D_i\cap Y_i$ and thus $D\cap Y\neq \emptyset$. Then Lemma 6.5 implies that $K\subset O\subset r(\CG\cdot D)\subset r(\CG\cdot \overline{D})$. In addition, since \[r(CV)\subset \bigsqcup_{i=1}^n\bigsqcup\{W_{i, \gamma}: \gamma\in \bigcup_{j=1}^mB_j, s(\gamma)=u_i\},\] which is disjoint from $V$, one has $VCV=\emptyset$. This shows that $O$ is groupoid small.
\end{proof}

Now, we can establish our Corollary 1.1.

\begin{proof}(Corollary 1.1)
	Since $\CG$ is amenable then $C^*_r(\CG)$ is nuclear by a classical result of Tu in \cite{Tu}. In addition, by \cite{B-L}, the amenability of $\CG$ shows that essentially principality of $\CG$ implies that $C_0(\GU)$ separating ideals of $C_r^*(\CG)$. Then Theorem 6.8 and Proposition 6.9 imply that $C^*_r(\CG)$ is purely infinite. In addition, since $C_0(\GU)$ separates ideals of $C_r^*(\CG)$ and there are only finitely many $\CG$-invariant closed sets in $\GU$, there are only finitely many closed ideals in $C_r^*(\CG)$.  This shows that $\operatorname{Prim}(C_r^*(\CG))$ is finite.  Now if $\CG$ is second countable then $C^*_r(\CG)$ is separable. Then Proposition 2.11 in \cite{P-R} implies that $C_r^*(\CG)$ has the ideal property (IP). Now Proposition 2.14 in \cite{P-R} shows that $C_r^*(\CG)$ is strongly purely infinite.
\end{proof}

Now suppose $\CG$ is minimal. We show below $ M(\CG)\neq \emptyset$ implies stably finiteness of $C^*_r(\CG)$.

\begin{lemma}
	Let $\CG$ be a locally compact Hausdorff \'{e}tale minimal topological principal groupoid. Then any $\mu\in M(\CG)$ induces a lower semi-continuous bounded faithful $2$-quasitrace on $A=C_r^*(\CG)$ defined by $\tau_\mu(a)=\int_{\GU}E(a)\diff{\mu}$ for $a\in A_+$.  
\end{lemma}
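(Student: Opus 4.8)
The plan is to verify each clause of the claim—that $\tau_\mu$ is a well-defined function $A_+ \to [0,\infty]$, that it is a quasitrace, that it is a $2$-quasitrace, and that it is lower semi-continuous, bounded, and faithful—essentially by pushing properties of the conditional expectation $E$ and the invariant measure $\mu$ through the integral. First I would note that $E: A \to C_0(\GU)$ is positive, so $E(a) \in C_0(\GU)_+$ for $a \in A_+$; since $\mu$ is a (finite, regular Borel) measure on $\GU$, the integral $\tau_\mu(a) = \int_{\GU} E(a)\,\diff\mu$ is a well-defined element of $[0,\infty)$ (boundedness is automatic because $\mu$ is a probability measure and $E$ is norm-contractive, giving $\tau_\mu(a) \le \|a\|$). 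Linearity and additivity on all of $A_+$ (not just commuting elements) follow from linearity of $E$ and of the integral, which in particular gives property (ii) of a quasitrace.

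\textbf{The trace property.} The only substantive point is the identity $\tau_\mu(d^*d) = \tau_\mu(dd^*)$ for $d \in A$, i.e.\ $\int_{\GU} E(d^*d)\,\diff\mu = \int_{\GU} E(dd^*)\,\diff\mu$. The standard way I would do this is to reduce, using Proposition 2.6 and density of $C_c(\CG)$, to the case $d \in C_c(\CG)$ with $\overline{\supp(d)} \subset V$ for a precompact open bisection $V$; by continuity of $E$ and of $\tau_\mu$, if the identity holds on such a spanning set of $C_c(\CG)$ it holds on sums of them and hence (after an $\eps/3$ approximation) on all of $A$. Actually the cleanest reduction is: both $d^*d$ and $dd^*$ have the form $E(d^*d) = s(|d|)^2$-type expressions—more precisely, for $d$ supported on a bisection $V$ one has $E(d^*d) = s(d^*d)$ concentrated on $s(V) \subset \GU$ and $E(dd^*)$ concentrated on $r(V) \subset \GU$, with $s(d^*d)(s(\gamma)) = |d(\gamma)|^2 = (dd^*)(r(\gamma)) = r(dd^*)(r(\gamma))$. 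Then $\int_{\GU} E(d^*d)\,\diff\mu = \int_{s(V)} s(d^*d)\,\diff\mu = \int_{r(V)} r(dd^*)\,\diff\mu = \int_{\GU} E(dd^*)\,\diff\mu$, where the middle equality is precisely the $\CG$-invariance of $\mu$ applied to the bisection $V$ (changing variables along $s|_V$ and $r|_V$). For a general $d \in C_c(\CG)$, write $d = \sum_{i=0}^n d_i$ with $\overline{\supp(d_i)} \subset V_i$; then $E(d^*d) = \sum_{i} E(d_i^* d_i)$ because the cross terms $d_i^* d_j$ for $i \neq j$ are supported off $\GU$ (their supports lie in $V_i^{-1} V_j$, and $E$ kills anything supported away from the unit space), and similarly $E(dd^*) = \sum_i E(d_i d_i^*)$; then sum the bisection case over $i$. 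This is the step I expect to require the most care, mainly in bookkeeping the supports and in justifying the approximation argument to pass from $C_c(\CG)$ to $A$.

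\textbf{The $2$-quasitrace extension.} To see $\tau_\mu$ is a $2$-quasitrace, I would observe that $C_r^*(\CG) \otimes M_2 \cong C_r^*(\CG \times \mathcal{R}_2)$ where $\mathcal{R}_2$ is the full equivalence relation on a two-point set (equivalently, $M_2(C_r^*(\CG))$ carries the conditional expectation $E \otimes \id_{M_2}$ onto $C_0(\GU) \otimes M_2 = M_2(C_0(\GU))$); the measure $\mu$ together with normalized counting measure on the two points gives an invariant measure on the unit space $\GU \times \{1,2\}$, and the associated quasitrace $\tau_{\mu,2}$ on $M_2(A)_+$ defined by $\tau_{\mu,2}(b) = \int E\otimes\mathrm{tr}_2(b)\,\diff\mu$ restricts correctly: $\tau_{\mu,2}(a \otimes e_{11}) = \tau_\mu(a)$. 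Running the same trace-property argument one dimension up (or invoking the bisection computation for $\CG \times \mathcal{R}_2$) shows $\tau_{\mu,2}$ is itself a quasitrace, which is exactly what is needed. Alternatively, since $E$ is completely positive, one checks the quasitrace axioms for $\tau_{\mu,2}$ directly by the same argument as above.

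\textbf{Lower semicontinuity and faithfulness.} Lower semicontinuity, $\tau_\mu(a) = \sup_{\eps > 0} \tau_\mu((a-\eps)_+)$, follows from $E((a-\eps)_+) \nearrow E(a)$ pointwise on $\GU$ as $\eps \to 0$ (continuity and positivity of $E$, plus $(a-\eps)_+ \to a$ in norm monotonically in the relevant sense) together with the monotone convergence theorem for the integral against $\mu$. For faithfulness, suppose $a \in A_+ \setminus \{0\}$; then by faithfulness of $E$ we have $E(a) \in C_0(\GU)_+ \setminus \{0\}$, so $O := \supp(E(a))$ is a non-empty open subset of $\GU$; it then suffices to know $\mu(O) > 0$. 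Here I would invoke minimality: since $\CG$ is minimal, the support of any $\mu \in M(\CG)$ is a closed $\CG$-invariant set, hence all of $\GU$, so every non-empty open set has positive measure (this is the standard argument—if $\mu(O)=0$ then $\GU \setminus \bigcup_{\text{bisections}} r(\text{translates of } O)$ would be a proper closed invariant set, contradicting minimality unless $O = \emptyset$). Therefore $\tau_\mu(a) \ge \int_O E(a)\,\diff\mu > 0$. Finally boundedness was already observed, and a bounded quasitrace is semifinite, completing the proof.
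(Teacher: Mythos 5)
Your overall strategy is sound and, apart from one step, matches what is needed; but the trace-property argument contains a claim that fails as written. After decomposing $d=\sum_{i}d_i$ with $\overline{\supp(d_i)}\subset V_i$ via Proposition 2.6, you assert that the cross terms $d_i^*d_j$ ($i\neq j$) are supported off $\GU$, so that $E(d^*d)=\sum_i E(d_i^*d_i)$. Proposition 2.6 gives no disjointness among the bisections $V_i$, and $V_i^{-1}V_j$ meets $\GU$ precisely when $V_i\cap V_j\neq\emptyset$ (if $\alpha^{-1}\beta\in\GU$ with $\alpha\in V_i$, $\beta\in V_j$, then $\beta=\alpha$), so for overlapping bisections the cross terms genuinely contribute to $E(d^*d)$ and your reduction collapses. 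The repair is standard and stays inside your framework: prove the polarized identity $\int_{\GU}E(f^*g)\diff\mu=\int_{\GU}E(gf^*)\diff\mu$ for $f,g$ supported in bisections $U,V$. Here $E(f^*g)$ is supported in $s(U\cap V)$ and $E(gf^*)$ in $r(U\cap V)$, with values $\overline{f(\gamma)}g(\gamma)$ at $s(\gamma)$, respectively at $r(\gamma)$, for $\gamma\in U\cap V$; since $U\cap V$ is again an open bisection, $\CG$-invariance of $\mu$ applied to all open sub-bisections (together with Borel regularity, to upgrade equality of measures of open sets to the change-of-variables formula for integrals --- a point you should also make explicit in your bisection computation) gives the equality. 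Summing over all pairs $(i,j)$ yields $\tau_\mu(d^*d)=\tau_\mu(dd^*)$ on $C_c(\CG)$, and your density/continuity argument then passes to all of $A$.

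For comparison, the paper's proof is terse: it declares the quasitrace property straightforward, obtains the $2$-quasitrace extension by noting that $\tau_\mu$ is additive on all of $A_+$ (being the restriction of the tracial positive linear functional $a\mapsto\int E(a)\diff\mu$), proves boundedness exactly as you do, and gets lower semicontinuity from the simpler estimate $\|a-(a-\eps)_+\|\leq\eps$ rather than monotone convergence. The one genuine divergence is faithfulness: the paper invokes simplicity of $C_r^*(\CG)$ (minimal plus topologically principal, via B\"onicke--Li), whereas you use faithfulness of $E$ together with the observation that minimality forces $\supp\mu=\GU$; your route is correct and more self-contained, at the cost of the small argument that the support of an invariant measure is $\CG$-invariant. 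Two minor slips: with normalized counting measure and normalized trace on $M_2$ you get $\tau_{\mu,2}(a\otimes e_{11})=\tfrac{1}{2}\tau_\mu(a)$, so rescale; and the parenthetical ``saturation'' argument for full support is shakier than the support argument you lead with, so drop it.
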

\begin{proof}
	Let $\mu\in M(\CG)$. It is straightforward to see $\tau_\mu$ is a quasitrace.  Observe that 
	\begin{align*}
	|\tau_\mu(a)-\tau_\mu((a-\epsilon)_+)|\leq \int_{\GU}\|E(a-(a-\epsilon)_+)\|\diff{\mu}
	\leq \int_{\GU}\|(a-(a-\epsilon)_+)\|\diff{\mu}
	\leq \epsilon.
	\end{align*}
	This shows that $\sup_{\epsilon>0}\tau_\mu((a-\epsilon)_+)=\tau_\mu(a)$ and thus $\tau_\mu$ is lower semi-continuous. Now because $\tau_\mu$ is additive for all $a, b\in A_+$, Proposition 2.24 in \cite{B-K} entails that $\tau_\mu$ is actually a $2$-quasitrace.  In addition, since $\|E(a)\|\leq \|a\|$ for any $a\in A_+$, one has $\tau_\mu(a)\leq \|a\|<\infty$ because $\mu$ is a probability measure. This shows $\tau_\mu$ is bounded. Now observe that $C^*_r(\CG)$ is simple because $\CG$ is minimal and topological principal by Corollary 3.14 in \cite{B-L}. Then $\tau_\mu$ is faithful. 
	
\end{proof}

\begin{thm}
Let $\CG$ be a locally compact Hausdorff \'{e}tale minimal topological principal groupoid. Suppose $\CG$ has groupoid comparison.  Then $C^*_r(\CG)$ is either stably finite or strongly purely infinite.
\end{thm}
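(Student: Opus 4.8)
The plan is to argue by cases according to whether $M(\CG)$ is empty, exploiting the fact that the minimal, topologically principal hypothesis forces a lot of structure for free. First I would record the standing observations: since $\CG$ is minimal and topologically principal, the only $\CG$-invariant closed subsets of $\GU$ are $\emptyset$ and $\GU$, so $\CG$ is essentially principal (the full groupoid $\CG|_{\GU}=\CG$ is topologically principal by hypothesis) and there are only finitely many $\CG$-invariant closed sets; moreover $C_0(\GU)$ separates ideals of $C^*_r(\CG)$ trivially, and $C^*_r(\CG)$ is simple by Corollary 3.14 in \cite{B-L}. These are exactly the hypotheses of Theorem 6.9.

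\textbf{Case 1: $M(\CG)=\emptyset$.} Here Lemma 3.10 applies: groupoid comparison together with $M(\CG)=\emptyset$ implies that $\CG$ is purely infinite, hence has paradoxical comparison. Then Theorem 6.9 gives that $C^*_r(\CG)$ is purely infinite. Since $C^*_r(\CG)$ is also simple, it is automatically strongly purely infinite by \cite{Kir-Rord}; in particular this conclusion is available without assuming $\CG$ second countable, which is the one point where I would be careful not to invoke the second part of Theorem 6.9 unnecessarily.

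\textbf{Case 2: $M(\CG)\neq\emptyset$.} Pick any $\mu\in M(\CG)$. By Lemma 6.10 this measure induces a bounded faithful lower semi-continuous $2$-quasitrace $\tau_\mu$ on $A=C^*_r(\CG)$; being bounded it is in particular semi-finite. Since $A$ is simple, Remark 2.27(viii) in \cite{B-K} (the non-unital Cuntz-type characterization) shows that $A$ is stably finite.

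Since the two cases are mutually exclusive and exhaustive, this establishes the dichotomy. The proof is essentially an assembly of results already in place, so there is no serious obstacle; the only subtleties are verifying carefully that all the hypotheses of Theorem 6.9 are met in Case 1 and noting that simplicity upgrades pure infiniteness to strong pure infiniteness there without a second-countability assumption, and checking in Case 2 that the quasitrace produced by Lemma 6.10 is faithful and semi-finite so that the stably-finite criterion applies.
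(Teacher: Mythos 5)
Your proposal is correct and follows essentially the same route as the paper: split on whether $M(\CG)$ is empty, use Lemma 6.10 together with Remark 2.27(viii) of \cite{B-K} for stable finiteness, and in the measure-free case pass from groupoid comparison to paradoxical comparison and then to (strong) pure infiniteness via simplicity and \cite{Kir-Rord}. The only cosmetic difference is that you unpack Corollary 1.1 by verifying the hypotheses of Theorem 6.9 directly (and quote Lemma 3.10 rather than Theorem 5.1), which is exactly how the paper's Corollary 1.1 is obtained, so the arguments coincide.
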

\begin{proof}
	Lemma 6.10 implies that if $M(\CG)\neq \emptyset$ then $C_r^*(\CG)$ is stably finite by Remark 2.27(viii) in \cite{B-K}. Now, suppose  $M(\CG)= \emptyset$. Since $\CG$ also has groupoid comparison, Theorem 5.1 implies that $\CG$ has paradoxical comparison. Then Corollary 1.2 implies that $C^*_r(\CG)$ is purely infinite and thus strongly purely infinite because $C^*_r(\CG)$ is simple by \cite{Kir-Rord}.
\end{proof}

We remark that our Theorem 6.11 is a generalization of the similar dichotomy obtained in \cite{B-L} and \cite{Ra-Sims}. This is because first we do not assume that the groupoid $\CG$ is ample and the unit space $\GU$ is compact. In addition, for the case that $\CG$ is ample and minimal, Theorem 5.1 and 5.15 imply that the type semigroup $\CV(\CG)$ is almost unperforated and has no non-trivial state if and only if $\CG$ has groupoid comparison and $M(\CG)=\emptyset$. 

In the rest of this section, we construct generic examples of purely infinite locally compact Hausdorff \'{e}tale groupoids and establish our final main result.

\begin{defn}
Let $\CG$ be a locally compact Hausdorff  \'{e}tale groupoid and $X$ be a locally compact Hausdorff space. Define $\CG_+=\CG\times X$ with the product topology. Equipped $\CG_+$ the groupoid operation by announcing
\begin{enumerate}[label=(\roman*)]
	\item $(\gamma, x)$ and $(\eta, y)\in \CG_+$ are composable only when $s(\gamma)=r(\eta)$ in $\CG$ and $x=y$ in $X$. In this case, $(\gamma, x)\cdot (\eta, y)=(\gamma\eta, x)$.
	
	\item $(\gamma, x)^{-1}=(\gamma^{-1}, x)$ for all $(\gamma, x)$.
	\item $\GU_+=\GU\times X$
\end{enumerate} 
\end{defn}

We remark that $\CG_+$ above is also a locally compact Hausdorff \'{e}tale groupoid. It is not hard to see that the collection of precompact open bisections of the form $O\times V$, where $O$ is a precompact open bisection in $\CG$ and $V$ is a precompact open set in $X$, form a base for the topology on $\CG_+$. In addition, it is not hard to verify that if $\CG$ is amenable then $\CG_+$ is also amenable. Note that $C^*_r(\CG_+)\simeq C^*_r(\CG)\otimes C_0(X)$ because $C^*_r(\CG_+)$ can be regarded as the crossed product of the action of $\CG$ on $C_0(\GU\times X)$ in which $C_0(\GU\times X)\simeq C_0(\GU)\otimes C_0(X)$ is a $C_0(\GU)$-algebra and the action on $C_0(X)$ is trivial. See \cite{A-D2} for the detailed construction.

\begin{prop}
Let $\CG$ be a locally compact Hausdorff  \'{e}tale groupoid and $X$ be a locally compact Hausdorff space. Suppose  $\CG$ has groupoid comparison and $M(\CG)=\emptyset$. Then $\CG_+$ has paradoxical comparison. 
\end{prop}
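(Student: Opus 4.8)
The plan is to deduce everything from the reformulation recorded just after Lemma 3.10: $\CG$ has groupoid comparison and $M(\CG)=\emptyset$ precisely when $U\prec_{\CG}V$ for all non-empty open $U,V\subseteq\GU$; in particular $K\prec_{\CG}V$ for every compact $K\subseteq\GU$ and every non-empty open $V\subseteq\GU$. Two elementary facts about $\CG_+=\CG\times X$ make the transfer work. First, if $A$ is an open bisection of $\CG$ and $W$ is open in $X$, then $A\times W$ is an open bisection of $\CG_+$ with $s(A\times W)=s(A)\times W$ and $r(A\times W)=r(A)\times W$; so one may move arbitrarily in the $\CG$-direction while the $X$-coordinate stays frozen. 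Second, by the reformulation the $\CG$-part of such a range can be compressed into \emph{any} prescribed non-empty open subset of $\GU$, so the whole problem reduces to arranging finitely many ranges to be pairwise disjoint, which can be done in the $\GU$-coordinate.

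Before the main argument I would dispose of two point-set lemmas. (i) $\GU$ has no isolated points: it cannot be a single point, for then the point mass is a $\CG$-invariant probability measure and $M(\CG)\neq\emptyset$; and if $|\GU|\ge 2$ and $u\in\GU$ were isolated, then $\GU\prec_{\CG}\{u\}$ (a case of the reformulation, $\GU$ being compact) would, after deleting empty bisections, force a single open bisection $A$ with $r(A)=\{u\}$, hence $|A|=1$, hence $\GU\subseteq s(A)$ a singleton, which is absurd. Consequently $\GU\times X$ has no isolated points, since an isolated point of a product forces isolated points in the factors. (ii) (Disjoint representatives.) In a locally compact Hausdorff space with no isolated points, any finite list $O_1,\dots,O_N$ of non-empty open sets admits pairwise disjoint non-empty open sets $Z_j\subseteq O_j$. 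I would prove this by induction on $N$: one first picks inside $O_N$ a small precompact open $Z_N$ with $O_j\not\subseteq\overline{Z_N}$ for all $j<N$ — such a $Z_N$ exists because $O_N$, being infinite, contains $N$ precompact opens with pairwise disjoint closures, and if each of these contained some $O_j$ in its closure we would obtain $N$ distinct indices in $\{1,\dots,N-1\}$ — and then applies the inductive hypothesis to $O_1\setminus\overline{Z_N},\dots,O_{N-1}\setminus\overline{Z_N}$.

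For the theorem it suffices to show $O\prec_{\CG_+,d}O$ for every non-empty open $O\subseteq\GU\times X$; fix such an $O$ and a compact $F\subseteq O$ (the case $F=\emptyset$ being trivial). Covering $F$ by basic boxes of $O$ and shrinking using local compactness, I would write $F\subseteq\bigcup_{i=1}^{n}(K_i\times L_i)\subseteq O$ with $K_i\subseteq\GU$, $L_i\subseteq X$ compact and $K_i\times L_i\subseteq U_i\times V_i\subseteq O$ for suitable non-empty open $U_i\subseteq\GU$, $V_i\subseteq X$. Apply lemma (ii) to the list $U_1,U_1,U_2,U_2,\dots,U_n,U_n$ to get $2n$ pairwise disjoint non-empty open sets $V_i^{(t)}\subseteq U_i$ ($1\le i\le n$, $t\in\{1,2\}$). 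For each $i,t$ the reformulation gives $K_i\prec_{\CG}V_i^{(t)}$, i.e. open bisections $A^{(t)}_{i,1},\dots,A^{(t)}_{i,m_{i,t}}$ of $\CG$ with $K_i\subseteq\bigcup_l s(A^{(t)}_{i,l})$, the sets $r(A^{(t)}_{i,l})$ pairwise disjoint, and $\bigcup_l r(A^{(t)}_{i,l})\subseteq V_i^{(t)}$. Put $\widetilde A^{(t)}_{i,l}:=A^{(t)}_{i,l}\times V_i$ (an open bisection of $\CG_+$) and $O_t:=\bigcup_{i,l}r(\widetilde A^{(t)}_{i,l})$. Then $O_t\subseteq\bigcup_i(V_i^{(t)}\times V_i)\subseteq\bigcup_i(U_i\times V_i)\subseteq O$; moreover $F\subseteq\bigcup_i(K_i\times L_i)\subseteq\bigcup_i(K_i\times V_i)\subseteq\bigcup_{i,l}s(\widetilde A^{(t)}_{i,l})$; and for fixed $t$ the ranges $r(\widetilde A^{(t)}_{i,l})=r(A^{(t)}_{i,l})\times V_i$ are pairwise disjoint (for equal $i$ because the $r(A^{(t)}_{i,l})$ are, for distinct $i,j$ because $r(A^{(t)}_{i,l})\subseteq V_i^{(t)}$, $r(A^{(t)}_{j,l'})\subseteq V_j^{(t)}$ and $V_i^{(t)}\cap V_j^{(t)}=\emptyset$). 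Hence $F\prec_{\CG_+}O_t$ for $t=1,2$. Finally $O_1\cap O_2=\emptyset$ because $O_1\subseteq\bigcup_i(V_i^{(1)}\times V_i)$, $O_2\subseteq\bigcup_j(V_j^{(2)}\times V_j)$ and $V_i^{(1)}\cap V_j^{(2)}=\emptyset$ for all $i,j$, and each $O_t$ is non-empty since some $K_i\neq\emptyset$. Thus $F\prec_{\CG_+,d}O$, and as $F$ and then $O$ were arbitrary, $\CG_+$ has paradoxical comparison.

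The conceptual content is light: once one sees that $r(A\times W)=r(A)\times W$ lets $\CG$'s comparison act coordinatewise, the argument is essentially bookkeeping. The one place that needs genuine care is the disjointification — lemma (ii) and the verification that the two families of ranges land in disjoint open sets — so that is where I would concentrate my effort, together with the short check that $\GU$ has no isolated points, which is exactly what makes lemma (ii) applicable here.
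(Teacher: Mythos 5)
Your proof is correct and follows essentially the same route as the paper's: cover the compact set $F$ by product boxes, use the reformulation that groupoid comparison together with $M(\CG)=\emptyset$ gives $U\prec_{\CG}V$ for all non-empty open $U,V\subset\GU$ to compress the $\GU$-coordinate into pairwise disjoint open sets chosen inside the first factors of the boxes, and realize the resulting subequivalences in $\CG_+$ with bisections of the form $A\times V$. The only difference is that you explicitly check that $\GU$ has no isolated points and prove the disjointification lemma, thereby justifying the selection of the $2n$ pairwise disjoint non-empty open sets, a step the paper's proof simply asserts when it ``chooses a disjoint collection $\{U_{i,j}\}$''.
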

\begin{proof}
	Let $O$ be open in $\GU_+$ and $F\subset O$ be a compact set. Then since $\GU_+$ is locally compact Hausdorff, one can choose open sets $M_i\times N_i\subset \GU_+$ and compact sets $F_i\subset M_i\times N_i$ for $i=1, \dots, n$ such that 
	\[F\subset \bigcup_{i=1}^nF_i\subset\bigcup_{i=1}^n(M_i\times N_i)\subset O.\]
	Now, since $\CG$ has groupoid comparison and $M(\CG)$ is the empty set, $\GU$ is perfect. This allows us to choose a disjoint collection $\{U_{i,j}: 1\leq i\leq n, j=1,2\}$ of non-empty open sets such that $U_{i,j}\subset M_i$ and $M_i\prec_{\CG} U_{ij}$ for all $j=1,2$ and $1\leq i\leq n$. Define $O_j=\bigsqcup_{i=1}^n(U_{ij}\times N_i)\subset O$ for $j=1,2$, which are two disjoint open sets.  Denote by $\pi_{\GU}$ and $\pi_X$ the canonical projection from $\GU\times X$ onto $\GU$ and $X$, respectively. Then $\pi_{\GU}(F_i)\subset M_i$ is a compact set.  Then fix a $j\in\{1, 2\}$. Since $M_i\prec_{\CG} U_{ij}$, there is a family $\{V^i_1, \dots, V^i_{K_i}\}$ such that $\pi_{\GU}(F_i)\subset \bigcup_{k=1}^{K_i}s(V^i_k)$ and $\bigsqcup_{k=1}^{K_i}r(V^i_k)\subset U_{ij}$. Now note that 
	\[F\subset \bigcup_{i=1}^nF_i\subset \bigcup_{i=1}^n(\pi_{\GU}(F_i)\times \pi_X(F_i))\subset\bigcup_{i=1}^n\bigcup_{k=1}^{K_i}(s(V_k^i)\times N_i)\] and
	\[\bigsqcup_{i=1}^n\bigsqcup_{k=1}^{K_i}(r(V_k^i)\times N_i)\subset \bigsqcup_{i=1}^n(U_{ij}\times N_i)=O_j.\]
	This implies that $F\prec_{\CG} O_j$ for $j=1, 2$ because each $V^i_k\times N_i$ is a bisection in $\CG_+$ such that $s(V^i_k\times N_i)=s(V^i_k)\times N_i$ and $r(V^i_k\times N_i)=r(V^i_k)\times N_i$.  Since $F$ is an arbitrary compact subset of $O$ and $O_1, O_2$ are disjoint open subset of $O$, we have $O\prec_{\CG, 2} O$ and thus $\CG_+$ has paradoxical comparison. 
\end{proof}

It can be shown that if $\CG$ is a locally compact Hausdorff amenable minimal topologically principal  \'{e}tale  groupoid and $X$ be a locally compact Hausdorff space then $\CG_+$ is essentially principal amenable and all open sets in $\GU_+$ are groupoid small. Then Proposition 6.2, 6.3 and Lemma 6.7 show that $C_r^*(\CG_+)$ is purely infinite if $\CG$ has groupoid comparison and $M(\CG)$ is the empty set. However, since $C^*_r(\CG_+)\simeq C^*_r(\CG)\otimes C_0(X)$, we have a better result by using Corollary 1.2 and a result due to Kirchberg and Sierakowski in \cite{Kir-S}.

\begin{thm}
	Let $\CG$ be a locally compact Hausdorff minimal topologically principal  \'{e}tale  groupoid and $X$ be a locally compact Hausdorff space. Suppose $\CG$ has groupoid comparison and $M(\CG)=\emptyset$. Then $C^*_r(\CG_+)$ is strongly purely infinite.
\end{thm}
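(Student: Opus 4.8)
The plan is to get the theorem by combining earlier results of the paper with the permanence of strong pure infiniteness under tensoring by a commutative $C^*$-algebra. Recall from the discussion following Definition 6.12 (see also \cite{A-D2}) that $C^*_r(\CG_+)\simeq C^*_r(\CG)\otimes C_0(X)$, the tensor product being unambiguous since $C_0(X)$ is nuclear. So it suffices to show that $C^*_r(\CG)\otimes C_0(X)$ is strongly purely infinite.

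First I would pin down the structure of $C^*_r(\CG)$. Since $\CG$ has groupoid comparison and $M(\CG)=\emptyset$, Lemma 3.10 (equivalently, by Theorem 5.1) gives that $\CG$ has paradoxical comparison; and since $\CG$ is moreover minimal and topologically principal, Corollary 1.1 then yields that $C^*_r(\CG)$ is simple and (strongly) purely infinite --- the word ``strongly'' costs nothing here, as pure infiniteness and strong pure infiniteness coincide for simple $C^*$-algebras and no separability is needed for this (see \cite{Kir-Rord}). Next I would invoke the result of Kirchberg and Sierakowski in \cite{Kir-S}: if $A$ is strongly purely infinite then $A\otimes C_0(X)$ is strongly purely infinite for every locally compact Hausdorff space $X$. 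Applying this with $A=C^*_r(\CG)$ and using the identification above finishes the proof.

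The one step calling for care is the last: one must check that the cited permanence statement applies in the present generality, because $\CG$ is not assumed second countable (so $C^*_r(\CG)$, hence $C^*_r(\CG)\otimes C_0(X)$, need be neither separable nor nuclear) and $X$ is an arbitrary locally compact Hausdorff space. When $C^*_r(\CG)$ happens to be separable one can sidestep the machinery --- a purely infinite simple separable $C^*$-algebra is $\CO_\infty$-absorbing, so $C^*_r(\CG)\otimes C_0(X)\simeq (C^*_r(\CG)\otimes C_0(X))\otimes\CO_\infty$ is $\CO_\infty$-absorbing and hence strongly purely infinite by \cite{Kir-Rord} --- but for the general case one genuinely uses the local (filling-family) characterization of strong pure infiniteness in \cite{Kir-S}, in which no separability, $\sigma$-unitality or exactness hypothesis enters. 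In short, once the problem is reduced to the tensor-product statement, all the analytic content is external, supplied by Corollary 1.1 together with \cite{Kir-S}, and the theorem follows.
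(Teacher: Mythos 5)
Your argument is correct and is essentially the paper's own proof: both pass through the identification $C^*_r(\CG_+)\simeq C^*_r(\CG)\otimes C_0(X)$, obtain simplicity and pure infiniteness of $C^*_r(\CG)$ from Corollary 1.1 (via paradoxical comparison), upgrade to strong pure infiniteness by \cite{Kir-Rord}, and conclude with the Kirchberg--Sierakowski permanence result of \cite{Kir-S}. Your added remarks on the non-separable case are a fair point of care but do not change the route.
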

\begin{proof}
Corollary 1.2 implies that $C^*_r(\CG)$ is  simple and purely infinite.  Therefore,  $C^*_r(\CG)$ is strongly purely infinite by Theorem 9.1 in \cite{Kir-Rord}.  Then Theorem 1.3 in \cite{Kir-S} shows that $C^*_r(\CG_+)\simeq C^*_r(\CG)\otimes C_0(X)$ is strongly purely infinite.
\end{proof}

As an application of amplification groupoids, they yield groupoid model for several strongly purely infinite $C^*$-algebras including some projectionless purely infinite $C^*$-algebras. We provide an explicit example below.
Recall that the action $\alpha_0: \Z_2\ast \Z_3 \curvearrowright X=\{0, 1\}^\N$ in Example 3.23 is a dynamical model of $\CO_2$ and $\alpha_0$ is shown to have groupoid comparison and $M_{\Z_2\ast \Z_3}(X)=\emptyset$. Define  an action $\beta: \Z_2\ast \Z_3 \curvearrowright X\times \R$ by $\beta_g(x, y)=((\alpha_{0})_g(x), y)$ for any $g\in \Z_2\ast \Z_3$. Then we have the following result.

\begin{thm}
	The strongly purely infnite $C^*$-algebra $\CO_2\otimes C_0(\R)$ has a dynamical model $\beta: \Z_2\ast \Z_3 \curvearrowright \{0, 1\}^\N\times \R$ such that $\beta$ has paradoxical comparison (and thus purely infinite). In addition, $\CO_2\otimes C_0(\R)$ has no locally contracting groupoid model.
\end{thm}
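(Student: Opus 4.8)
The plan is to establish the two halves of the statement independently. For the first half, set $X=\{0,1\}^\N$ and let $\CG_0=\CG_{\Z_2\ast\Z_3\curvearrowright X}$ be the transformation groupoid of $\alpha_0$. By Example 3.23 one has $C^*_r(\CG_0)\simeq C(X)\rtimes_r(\Z_2\ast\Z_3)\simeq\CO_2$, and by Proposition 3.24, together with Remark 3.3 and the conventions preceding Definition 3.4, $\CG_0$ has groupoid comparison and $M(\CG_0)=\emptyset$. The key observation is that, because $\beta$ acts trivially on the $\R$-coordinate, the transformation groupoid $\CG_\beta=\CG_{\Z_2\ast\Z_3\curvearrowright X\times\R}$ is isomorphic, as a topological groupoid, to the amplification groupoid $(\CG_0)_+$ of Definition 6.12 formed with auxiliary space $\R$: the assignment $((gx,y),g,(x,y))\mapsto((gx,g,x),y)$ is a homeomorphism intertwining composition, inversion, and the unit spaces $X\times\R$. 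Proposition 6.13 applied to $\CG_0$ then yields that $(\CG_0)_+$, and hence $\CG_\beta$, has paradoxical comparison, i.e.\ $\beta$ has paradoxical comparison. Finally, by the discussion following Definition 6.12, $C^*_r(\CG_\beta)\simeq C^*_r((\CG_0)_+)\simeq C^*_r(\CG_0)\otimes C_0(\R)\simeq\CO_2\otimes C_0(\R)$, and since the reduced groupoid $C^*$-algebra of a transformation groupoid is the reduced crossed product, $\beta$ is a dynamical model of $\CO_2\otimes C_0(\R)$; its strong pure infiniteness is already contained in Theorem 6.14 (alternatively, $\CO_2$ is a Kirchberg algebra and so $\CO_2\otimes C_0(\R)$ is strongly purely infinite by \cite{Kir-S}).

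For the second half I would argue by contradiction. Suppose $\CH$ is a locally compact Hausdorff \'{e}tale locally contracting groupoid with $C^*_r(\CH)\simeq\CO_2\otimes C_0(\R)$. By the theorem of Anantharaman-Delaroche recalled in the introduction (see \cite{A-D}), every non-zero hereditary sub-$C^*$-algebra of $C^*_r(\CH)$, in particular $C^*_r(\CH)$ itself, contains an infinite, hence non-zero, projection. On the other hand $\CO_2\otimes C_0(\R)$ is projectionless: identifying $\R$ with $(0,1)$ one may regard it as the suspension $C_0((0,1),\CO_2)$, and for any $C^*$-algebra $B$ the suspension $C_0((0,1),B)$ has no non-zero projection. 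Indeed, a projection $p\in C_0((0,1),B)$ gives a norm-continuous path $t\mapsto p(t)$ of projections in $B$ with $\lim_{t\to 0^+}\|p(t)\|=0$; since a projection of norm strictly less than $1$ must be $0$, the set $\{t\in(0,1):p(t)=0\}$ contains a neighbourhood of $0$, is closed by continuity of $p$ and open by the same estimate, and hence equals the connected interval $(0,1)$, so $p=0$. This contradicts the existence of a non-zero projection in $C^*_r(\CH)$, so no such $\CH$ exists. Combining the two halves, $\CG_\beta$ is a locally compact Hausdorff \'{e}tale groupoid with paradoxical comparison whose $C^*$-algebra $\CO_2\otimes C_0(\R)$ has no locally contracting groupoid model, which is the separation asserted in the statement.

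The proof is essentially bookkeeping: the identification $\CG_\beta\cong(\CG_0)_+$ is routine, the projectionlessness of a suspension is elementary, and the remaining ingredients are quoted from earlier sections. The one point demanding care is checking that $\CG_\beta\to(\CG_0)_+$ is a genuine homeomorphism, not merely an algebraic groupoid isomorphism, so that paradoxical comparison — which refers to the topology through open bisections — really does transfer; I do not expect any obstacle beyond that.
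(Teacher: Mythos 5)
Your proposal is correct and follows essentially the same route as the paper: identify the transformation groupoid of $\beta$ with the amplification groupoid of the $\CO_2$-model, apply Proposition 6.13 to get paradoxical comparison, and rule out locally contracting models via Anantharaman-Delaroche's infinite-projection result together with the projectionlessness of $\CO_2\otimes C_0(\R)$. You merely spell out two points the paper leaves implicit (the homeomorphism $\CG_\beta\cong(\CG_0)_+$ and the elementary suspension argument for projectionlessness), and both are checked correctly.
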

\begin{proof}
	Write $X=\{0, 1\}^\N$. Note that the action $\beta: \Z_2\ast \Z_3 \curvearrowright X\times \R$ above satisfies that $C_0(X\times \R)\rtimes_r (\Z_2\ast \Z_3)\simeq \CO_2\otimes C_0(\R)$. Therefore $\beta$ is a dynamical model of $\CO_2\otimes C_0(\R)$. In addition, $\CO_2\otimes C_0(\R)$ contains no non-zero projections, so it has no locally contracting groupoid model because all locally contracting groupoid yields an infinite projection for its reduced $C^*$-algebra.
\end{proof}

\section{Acknowledgement}
The author should like to thank Hanfeng Li for the motivating example (Example 3.28), inspiring suggestions and comments.  In addition, he should like to thank David Kerr and Kang Li for invaluable and helpful comments. He also would like to thank Tsz Fun Hung for helpful discussions. Finally, he thank the referees, whose comments and suggestions helped to significantly improve the paper.

\end{document}